\newtheorem{thm}{Theorem}[section]
\newtheorem{la}[thm]{Lemma}
\newtheorem{Defn}[thm]{Definition}
\newtheorem{Remark}[thm]{Remark}
\newtheorem{Example}[thm]{Example}
\newtheorem{Number}[thm]{\!\!}
\newenvironment{rem}{\begin{Remark}\rm}{\end{Remark}}
\newenvironment{numba}{\begin{Number}\rm}{\end{Number}}
\newenvironment{proof}{{\noindent\bf Proof.}}%
                  {\nopagebreak\hspace*{\fill}$\Box$\medskip\par}
\newcommand{\cU}{{\mathcal U}}
\newcommand{\cE}{{\mathcal E}}
\newcommand{\cO}{{\mathcal O}}
\newcommand{\cL}{{\mathcal L}}
\newcommand{\cB}{{\mathcal B}}
\newcommand{\ve}{\varepsilon}
\newcommand{\R}{{\mathbb R}}
\newcommand{\N}{{\mathbb N}}
\newcommand{\C}{{\mathbb C}}
\newcommand{\K}{{\mathbb K}}
\newcommand{\bI}{{\mathbb I}}
\newcommand{\Sph}{{\mathbb S}}
\newcommand{\mto}{\mapsto}
\newcommand{\sub}{\subseteq}
\newcommand{\wt}{\widetilde}
\newcommand{\wb}{\overline}
\newcommand{\cg}{{\mathfrak g}}
\DeclareMathOperator{\id}{id}
\DeclareMathOperator{\graph}{graph}
\DeclareMathOperator{\pr}{pr}
\DeclareMathOperator{\Diff}{Diff}
\DeclareMathOperator{\Evol}{Evol}
\DeclareMathOperator{\Germ}{Germ}
\DeclareMathOperator{\Hol}{Hol}
\DeclareMathOperator{\Fl}{Fl}
\DeclareMathOperator{\ev}{ev}
\DeclareMathOperator{\Repart}{Re}
\DeclareMathOperator{\Impart}{Im}
\DeclareMathOperator{\op}{op}
\DeclareMathOperator{\comp}{comp}
\newcommand{\dl}{{\displaystyle \lim_{\rightarrow}}}
\newcommand{\impl}{\Rightarrow}
\DeclareMathOperator{\Ad}{Ad}
\DeclareMathOperator{\AC}{AC}
\begin{document}
\begin{center}
{\bf\Large Lie groups of real analytic diffeomorphisms\\[1.5mm]
are {\boldmath$L^1$}-regular}\\[4mm]
{\bf Helge Gl\"{o}ckner}\vspace{.5mm}
\end{center}
\begin{abstract}
\hspace*{-4.7mm}Let $M$ be a compact, real analytic manifold
and $G:=\Diff^\omega(M)$
be the Lie group of all real-analytic diffeomorphisms
$\gamma\colon M\to M$,
which is modelled on the locally convex
space $\cg:=\Gamma^\omega(TM)$
of real-analytic vector fields
on~$M$.
Let $\AC([0,1],G)$ be the Lie group
of all absolutely continuous functions
$\eta\colon [0,1]\to G$.
We study flows of time-dependent real-analytic vector fields
on~$M$ which are $\cL^1$ in time,
and their dependence on the time-dependent
vector field.
Notably, we show that the Lie group
$\Diff^\omega(M)$ is $L^1$-regular
in the sense that each
$[\gamma]\in L^1([0,1],\cg)$
has an evolution
$\Evol([\gamma])\in \AC([0,1],G)$
which depends smoothly on~$[\gamma]$.
As tools for the proof,
we develop new results
concerning $L^1$-regularity of
infinite-dimensional Lie groups,
and new results
concerning the continuity and complex analyticity
of non-linear mappings on locally
convex direct limits.\vspace{2mm}
\end{abstract}
{\bf Classification:}
22E65 (primary);
34A12,
%Initial value problems, existence, uniqueness, continuous dependence
%
28B05,
%Vector-valued set functions, measures and integrals
%
34H05,
%Control problems involving ordinary differential equations
%
46E30,
%Spaces of measurable functions (Lp
%-spaces, Orlicz spaces, Köthe function spaces, Lorentz spaces,...
%
46E40.\\[2.3mm]
%Spaces of vector- and operator-valued functions
%
%
%
{\bf Key words:}
diffeomorphism group, real analytic diffeomorphism,
real analytic vector field, time dependence, parameter dependence,
absolute continuity, Carath\'{e}odory solution,
regular Lie group,
measurable regularity, right-invariant vector field,
holomorphic map, complex
analytic map, locally convex direct limit
\section*{Introduction and statement of main results}
For a compact, real analytic manifold~$M$,
we study the parameter-dependence
of flows associated with time-dependent
real-analytic vector fields $\gamma$ on~$M$
which are $\cL^1$ in time,
in a global formulation. The equivalence class
$[\gamma]$ of $\gamma$ in $L^1$ serves as the parameter.\\[2.1mm]
We are working in the setting of infinite-dimensional
calculus known as Keller's $C^k_c$-theory.
Essential concepts and facts concerning this approach
and the corresponding manifolds and Lie groups
are compiled in Appendix~\ref{appA}.\\[3mm]
{\bf The classical case of smooth time dependence.}
To motivate our approach,
let us start with a compact smooth manifold~$M$.
Let $\Gamma(TM)$ be the space of smooth vector fields
on~$M$
and $C^\infty(\bI,\Gamma(TM))$
be the space of time-dependent smooth vector
fields $\gamma\colon \bI\to\Gamma(TM)$
on~$M$, with smooth dependence of $\gamma_t:=\gamma(t)\in \Gamma(TM)$
on $t\in \bI:=[0,1]$.
As usual, we give spaces of smooth functions
and spaces of smooth vector fields
the compact-open $C^\infty$-topology
(as recalled in \ref{vftopo}).
For the differential equation
\[
\dot{y}(t)=\gamma_t(y(t)),
\]
the flow with parameter $\gamma\in C^\infty(\bI,\Gamma(TM))$,
\begin{equation}\label{thfl}
\bI\times \bI\times M\times C^\infty(\bI,\Gamma(TM))\to M,\;\;
(t,t_0,y_0,\gamma)\mto\Fl^\gamma_{t,t_0}(y_0),
\end{equation}
is globally defined, as compactness of~$M$ implies completeness
for each vector field~$\gamma$.
Moreover, the flow~(\ref{thfl})
is smooth (e.g., as a consequence of \cite[Corollary 2.5.20]{GaN}
and \cite[Lemma~1.19\,(a)]{AGS});
actually, we shall only need smoothness for fixed $t_0$,
as established in \cite[Proposition 5.13]{AaS}.
For all $t,t_0\in \bI$, the map
$\Fl^\gamma_{t,t_0}(\cdot)\colon M\to M$
is an element of the group $\Diff(M)$
of $C^\infty$-diffeomorphisms $\psi\colon M\to M$,
which we endow with its natural Fr\'{e}chet--Lie group structure modelled
on $\Gamma(TM)$, as in \cite{Mic,Ham,Mil}.
Fixing the initial time $t_0\in [0,1]$,
one can apply exponential laws twice to deduce that the associated map
\begin{equation}\label{paradigm}
C^\infty(\bI,\Gamma(TM))\to C^\infty(\bI,\Diff(M)),\quad
\gamma\mto (t\mto \Fl^\gamma_{t,t_0}(\cdot))
\end{equation}
is smooth.\footnote{The smooth manifold
structure on $C^\infty(M,M)$
is canonical in the sense of
\cite[Definition~1.17]{AGS} by \cite[Proposition~1.23]{AGS},
so that the open subset
%submanifold
$\Diff(M)\sub C^\infty(M,M)$
inherits an analogous exponential law.
Likewise,
the manifold structure on $C^\infty(\bI,\Diff(M))$
is canonical.}
In this article,
we shall consider differential equations with parameters in a locally convex space~$P$
for which smoothness
of the flow
\[
\bI\times\bI \times M\times P\to M
\]
does not make sense, but
a smoothness property
%analogous to
like~(\ref{paradigm})
is still possible.\\[2.7mm]
{\bf Absolutely continuous functions and
Carath\'{e}odory solutions.}
Our results will involve absolutely continuous functions
with values in locally convex spaces,
and absolutely continuous functions with values in smooth manifolds
which may be infinite-dimensional. The concepts are as follows
(for more details, see Appendix~\ref{secB}).\\[2.3mm]
Recall that a locally convex space~$E$ is called
sequentially complete if each Cauchy sequence
in~$E$ is convergent (see, e.g., \cite{Mil}).
If $E$ is a sequentially complete locally convex space
and $p\in [1,\infty]$,
then
a function
$\eta\colon [a,b]\to E$
is called an \emph{$\AC_{L^p}$-function}
if it is a primitive
of an $\cL^p$-function $\gamma\colon [a,b]\to E$,
\[
\eta(t)=\eta(a)+\int_a^t\gamma(s)\,ds\quad\mbox{for all $\,t\in[a,b]$,}
\]
the integral being understood as a weak $E$-valued
integral with respect to Lebesgue measure
(see \cite[Definition 4.2.8]{Nik}).
The vector-valued $\cL^p$-functions $\gamma\colon \bI\to E$
considered here are Lusin-measurable functions
(as in \cite{FMP} or \cite{Tho}) such that
$\|q\circ \gamma\|_{L^p}<\infty$ for each continuous
seminorm~$q$ on~$E$ (see \cite[Definition 4.1.10]{Nik}).
The concept of Lusin measurability is recalled
in \ref{close-to-borel}.
Cf.~\cite{Lew} for a different, but
overlapping setting
based on a notion of integrability in seminorm.\\[2.3mm]
If~$M$ is a $C^1$-manifold modelled on~$E$
(see, e.g., \cite{Res,GaN,Nee,Sme}), then a
function $\eta\colon [a,b]\to M$
is called $\AC_{L^p}$
if there is a subdivision $a=t_0<\cdots< t_m=b$
such that $\eta([t_{j-1},t_j])$
is contained in the domain $U_\phi$
of a chart $\phi\colon U_\phi\to V_\phi\sub E$
of~$M$ and $\phi\circ \eta|_{[t_{j-1},t_j]}$
is an $E$-valued $\AC_{L^p}$-function
for all $j\in\{1,\ldots,m\}$ (see \cite[Definition 4.2.20]{Nik}).
The $\AC_{L^1}$-functions are called \emph{absolutely continuous}
and we also write $\AC$ in place of $\AC_{L^1}$;
each $\AC_{L^p}$-function is absolutely continuous.\\[2.3mm]
Absolutely continuous
solutions to differential equations
are called \emph{Carath\'{e}odory
solutions}; we recall this concept in~\ref{numba-ode}
and \ref{def-ode-mfd-2}
in the appendix. See~\cite{GaH}
for further information, notably for fundamentals like
local existence of solutions, local uniqueness,
maximal solutions, and flows;
also, see~\cite{MeR} (with an emphasis on differential
equations in Fr\'{e}chet spaces)
and~\cite{Nik}. For the classical
case of differential equations
in Banach spaces and finite-dimensional spaces,
an exposition of Carath\'{e}odory solutions
can be found in the book~\cite{Sch}.\\[2.7mm]
{\bf Main results concerning parameter-dependence
of flows.}
If~$G$ is a Lie group modelled on~$E$
(as in~\cite{Mil}), then the set $\AC_{L^p}(\bI,G)$
of all $G$-valued $AC_{L^p}$-functions on $\bI$
is a Lie group (see~\cite[Proposition 4.2.27]{Nik}).
The Lie group structure can be obtained
as in the classical case of the Lie group $C(\bI,G)$
of continuous $G$-valued functions on~$\bI$
and the Lie groups $C^k(\bI,G)$ of $G$-valued
$C^k$-functions for $k\in \N_0\cup\{\infty\}$
(cf.\ \cite{Mil,PaS,GCX,Nee,GaN}).\\[2.3mm]
We are interested in the Lie group $\Diff^\omega(M)$
of real-analytic diffeomorphisms of a compact,
real analytic manifold~$M$, as in
\cite{KaM}, \cite{DaS}
and previous work of Leslie~\cite{Les}.
It is modelled on the locally convex space $\Gamma^\omega(TM)$
of real-analytic vector fields
on~$M$,
which is a so-called Silva space (a concept recalled in~\ref{muchi}).\\[4mm]
{\bf Theorem~A.}
\emph{Let $M$ be a compact, real-analytic manifold,
and $p\in [1,\infty]$.
Then the following holds.}
\begin{itemize}
\item[\rm(a)]
\emph{For each $\cL^p$-map $\gamma\colon \bI\to\Gamma^\omega(TM)$,
the differential equation}
\[
\dot{y}(t)=\gamma(t)(y(t))
\]
\emph{satisfies local existence and local uniqueness
of Carath\'{e}odory solutions,
and the maximal Carath\'{e}odory solution $\eta_{t_0,y_0}$
to the initial
value problem $\dot{y}(t)=\gamma(t)(y(t))$, $y(t_0)=y_0$
is defined on all of $\bI$, for all $(t_0,y_0)\in \bI\times M$.
Moreover, $\eta_{t_0,y_0} \!\in\!  \AC_{L^p}(\bI,M)$.
Write $\Fl_{t,t_0}^\gamma(y_0)\!:=\!\eta_{t_0,y_0}(t)$.}
\item[\rm(b)]
\emph{For each $\gamma$ as in {\rm(a)} and all $t,t_0\in \bI$, the map
$\Fl_{t,t_0}^\gamma\colon M \to M$
is a real-analytic diffeomorphism of~$M$.}
\item[\rm(c)]
\emph{For all $\gamma$ as in {\rm(a)} and $t_0\in \bI$,
the map $\bI\to\Diff^\omega(M)$, $t\mto \Fl_{t,t_0}^\gamma$
is absolutely continuous $($and actually an $\AC_{L^p}$-map$)$.}
\item[\rm(d)]
\emph{For each $t_0\in \bI$, the mapping}
\begin{equation}\label{the-essence}
L^p(\bI,\Gamma^\omega(TM))\to \AC_{L^p}(\bI,\Diff^\omega(M)),
\;\; [\gamma]\mto(t\mto \Fl^\gamma_{t,t_0})
\end{equation}
\emph{is well defined and $C^\infty$.}\vspace{2mm}
\end{itemize}
To establish Theorem~A,
we shall prove that the Lie group $\Diff^\omega(M)$
is \emph{$L^1$-regular}
in a sense we shall presently recall;
we then show that the theorem
follows from the $L^1$-regularity.
Actually, we can use the same argument
to deduce an analogue of Theorem~A
for another class of diffeomorphism
groups whose $L^1$-regularity is
already known:\\[2.3mm]
If $M$ is a paracompact, finite-dimensional
smooth manifold, let $\Diff_c(M)$
be the Lie group of all smooth
diffeomorphisms $\phi\colon M\to M$
which are compactly supported in the sense
that $\{x\in M\colon \phi(x)\not=x\}$
has compact closure in~$M$ (cf.\ \cite{Mic, Sm0}).
It is modelled on the space $\Gamma_c(TM)$
of compactly supported smooth vector fields,
endowed with the usual locally convex vector topology
making it the locally convex direct limit
of the spaces $\Gamma_K(TM)$ of smooth vector
fields $X\colon M\to TM$
supported in compact set $K\sub M$,
endowed with the compact-open $C^\infty$-topology.
As shown in \cite[Theorem B]{MeR},
the Lie group $\Diff_c(M)$ is $L^1$-regular.
Re-using the proof of Theorem~A,
we shall observe:\\[4mm]
{\bf Theorem B.}
\emph{Let $M$ be a paracompact, finite-dimensional smooth
manifold and $p\in [1,\infty]$.
Then the following holds.}
\begin{itemize}
\item[\rm(a)]
\emph{For each $\cL^p$-map $\gamma\colon \bI\to\Gamma_c(TM)$,
the differential equation}
\[
\dot{y}(t)=\gamma(t)(y(t))
\]
\emph{satisfies local existence and local uniqueness
of Carath\'{e}odory solutions,
and the maximal Carath\'{e}odory solution $\eta_{t_0,y_0}$
to the initial
value problem $\dot{y}(t)=\gamma(t)(y(t))$, $y(t_0)=y_0$
is defined on all of $\bI$, for all $(t_0,y_0)\in \bI\times M$.
Moreover, $\eta_{t_0,y_0}\!\in\! \AC_{L^p}(\bI,M)$.
Write $\Fl_{t,t_0}^\gamma(y_0)\!:=\!\eta_{t_0,y_0}(t)$.}
\item[\rm(b)]
\emph{For each $\gamma$ as in {\rm(a)} and all $t,t_0 \in \bI$, we
have $\Fl_{t,t_0}^\gamma\in\Diff_c(M)$.}
\item[\rm(c)]
\emph{For each $\gamma$ as in {\rm(a)} and each $t_0\in\bI$, the map
$\bI\to\Diff_c(M)$, $t\mto \Fl_{t,t_0}^\gamma$
is~$AC_{L^p}$.}
\item[\rm(d)]
\emph{For each $t_0\in\bI$, the mapping
$L^p(\bI,\Gamma_c(TM))\!\to \! \AC_{L^p}(\bI,\Diff_c(M))$,\linebreak
$[\gamma]\!\mto \! (t\mto \Fl^\gamma_{t,t_0})$ is well defined
and smooth.}\vspace{2mm}
\end{itemize}
Theorem~B may be of largest interest in the case of a compact smooth manifold~$M$.
Then $\Gamma_c(TM)=\Gamma(TM)$
is the space of all smooth vector fields and $\Diff_c(M)=\Diff(M)$
the Lie group of all $C^\infty$-diffeomorphisms of~$M$,
so that Theorem~B\,(d) furnishes a direct analogue of~(\ref{paradigm}).\\[3mm]
{\bf Regularity properties of infinite-dimensional Lie groups.}
If $G$ is a Lie group
modelled on a locally convex space,
then $G$ gives rise
to a smooth left action $G\times TG\to TG$,
$(g,v)\mto g.v$ on its tangent bundle
$TG$ via left translation,
$g.v:=T\lambda_g(v)$ with $\lambda_g\colon G\to G$,
$h\mto gh$.
Let $e\in G$ be the neutral element
and assume that the
modelling space~$E$ of~$G$ is sequentially
complete.
If $\gamma\colon \bI\to \cg$
is an $\cL^p$-map to the Lie algebra
$\cg:=T_eG\cong E$ of~$G$, then the initial value problem
\[
\dot{y}(t)=y(t).\gamma(t),\quad y(0)=e
\]
has at most one Carath\'{e}odory solution
$\eta\colon \bI\to G$
(see \cite[Lemma~4.3.4\,(iii)]{Nik});
the map $\Evol([\gamma]):=\eta\in AC_{L^p}(\bI,G)$
is called the \emph{evolution} of $\gamma$.
If each $\gamma\in\cL^p(\bI,\cg)$ has an evolution,
then $G$ is called \emph{$L^p$-semiregular}.
If $G$ is $L^p$-semiregular and
\[
\Evol\colon L^p(\bI,\cg)\to C(\bI,G)
\]
is smooth as a map to the Lie group
$C(\bI,G)$ of continuous $G$-valued maps
on~$\bI$ (or, equivalently, to the Lie group
$\AC_{L^p}(\bI,G)$ of $G$-valued $\AC_{L^p}$-maps),
then $G$ is called \emph{$L^p$-regular} (cf.\ \cite[Definition 4.3.7]{Nik};
the equivalence is shown in \cite[Theorem 4.3.9]{Nik}).\\[2.3mm]
Let $k\in\N_0\cup\{\infty\}$.
If $\Evol(\gamma)$
exists for each $\gamma\in C^k([0,1],G)$
and the time-$1$-map
$C^k(\bI,\cg)\to G$, $\gamma\mto \Evol(\gamma)(1)$
is smooth, then~$G$ is called \emph{$C^k$-regular}.\footnote{The latter holds if
and only if $\Evol\colon C^k(\bI,\cg)\to
C^{k+1}(\bI,G)$ is smooth (cf.\ \cite[Theorem~A]{SEM})
and hence if and only if
$\Evol\colon C^k(\bI,\cg)\to
C(\bI,G)$ is smooth
(because the inclusion mapping $C^{k+1}(\bI,G)\to C(\bI,G)$
and the point evaluation $C(\bI,G)\to G$, $\eta\mto\eta(1)$
are smooth).}
See~\cite{SEM} for these concepts and \cite{Han},
where an in-depth study of $C^k$-regularity is provided.
The $C^\infty$-regular Lie groups
are simply called \emph{regular}.
For Lie groups
with sequentially complete modelling
spaces, the concept of regularity was introduced by John Milnor~\cite{Mil}.
It remains an open problem whether every such Lie group
is regular.\\[2.3mm]
It is clear from the definitions that the
following implications hold for a Lie group $G$ modelled
on a sequentially complete locally convex space:\\[2.3mm]
$L^1$-regular $\impl$ $L^p$-regular $\impl$
$L^\infty$-regular $\impl$ $C^0$-regular $\impl$ $C^k$-regular $\impl$
regular\\[2.3mm]
(cf.\ also \cite[Theorem~A]{MeR}).
Regularity is a central concept in infinite-dimensional
Lie
theory; see \cite{Mil}, \cite{KaM},
\cite{SEM}, \cite{GaN}, \cite{Han}, and \cite{Nee}
for further information.\\[2.3mm]
As shown by Hanusch~\cite{Han},
every $C^0$-regular Lie group~$G$ is \emph{locally $\mu$-convex}
in the sense of~\cite{SEM},
i.e., for each chart $\phi\colon U_\phi\to V_\phi$ around~$e$ with $\phi(e)=0$
and continuous seminorm $p$ on the modelling space~$E$
of~$G$, there exists a continuous
seminorm~$q$ on~$E$ such that,
for each $n\in\N$, the
product $g_1\cdots g_n$ is in $U_\phi$ and
\[
p(\phi(g_1\cdots g_n))\leq\sum_{j=1}^nq(\phi(g_j)),
\]
for all $g_1,\ldots,g_n\in U_\phi$ with $\sum_{j=1}^nq(\phi(g_j))<1$.
The Trotter Product Formula,
\[
\lim_{n\to\infty}(\exp_G(v/n)\exp_G(w/n))^n=\exp_G(v+w)\quad
\mbox{for all $v,w\in\cg$,}
\]
and the Commutator Formula
hold in each $L^\infty$-regular Lie group~$G$
(cf.\ \cite[Theorem~I]{MeR}), which can be useful, e.g.,
in representation theory (see \cite{NaS}).
Subsequent work showed that $C^0$-regularity
suffices for the conclusion~\cite{Tro}.\\[3mm]
{\bf The main result: {\boldmath$L^1$}-regularity of
{\boldmath$\Diff^\omega(M)$}.}
By the preceding, it is rewarding to establish as
strong regularity properties as possible
for a given Lie group.
So far, $C^1$-regularity of $\Diff^\omega(M)$ was known~\cite{DaS}.
The next theorem implies Theorem~A.\\[2.7mm]
{\bf Theorem C.}
\emph{For every compact, real-analytic manifold~$M$,
the Lie group $\Diff^\omega(M)$ is $L^1$-regular.
In particular, $\Diff^\omega(M)$ is $C^0$-regular.}\\[4mm]
{\bf Methods.}
To prove Theorem~C,
we first construct $\Evol$ on a $0$-neighbourhood
in
$L^1(\bI,\Gamma^\omega(TM))$
and prove its continuity.
To perform the construction,
we use tools for the proof of $L^p$-semiregularity
provided in Section~\ref{sec-semi}.
To establish continuity of~$\Evol$, we use
the following theorem
concerning non-linear functions
on subsets of locally convex direct limits.
It involves global Lipschitz continuity
in a strong sense:\\[4mm]
{\bf Definition.}
Let $E$ and $F$ be locally convex spaces
and $U\sub E$ be a subset.
We say that a function $f\colon U\to F$
is \emph{Lipschitz continuous}
if, for each continuous seminorm~$p$ on~$F$,
there exists a continuous seminorm~$q$ on~$E$
such that
\[
p(f(x)-f(y))\leq q(x-y)\quad\mbox{for all $\,x,y\in U$.}\vspace{2mm}
\]
{\bf Theorem D.}
\emph{Let $E_1\sub E_2\sub\cdots$ be locally convex spaces such that
each inclusion map $E_n\to E_{n+1}$
is continuous and linear.
Give $E:=\bigcup_{n\in\N}E_n$
the vector space structure making each~$E_n$
a vector subspace, and endow~$E$
with the locally convex direct limit topology.
Let $U_n\sub E_n$ be an open convex subset
for each $n\in\N$ such that $U_1\sub U_2\sub\cdots$;
then $U:=\bigcup_{n\in\N}U_n$ is open in~$E$.
If $F$ is a locally convex space and $f\colon U\to F$
a function such that
$f_n:=f|_{U_n}$ is Lipschitz continuous on $U_n\sub E_n$
for each $n\in\N$,
then~$f$ is continuous on $U\sub E$.}\\[4mm]
To complete the proof of Theorem~C,
it suffices to show that continuity of
$\Evol$ implies its smoothness
for an $L^1$-semiregular Lie group modelled
on a sequentially complete locally convex space.
An analogous result for $C^0$-semiregular Lie groups
(requiring integral completeness)
and related situations was obtained by Hanusch.\footnote{Compare 1)
in Theorem~4 and 1) in Corollary~9
in~\cite{Han}.}
We show:\\[3mm]
{\bf Theorem E.}
\emph{Let $G$ be a Lie group modelled
on a sequentially complete locally convex space,
with Lie algebra~$\cg=T_eG$.
Let $p\in [1,\infty]$. If $G$
is $L^p$-semiregular and $\Evol\colon L^p([0,1],\cg)\to C([0,1],G)$
is continuous at~$0$, then $G$ is $L^p$-regular.}\\[4mm]
{\bf General context.}
Differential equations with measurable
right-hand sides are frequently used in control theory
(see, e.g., \cite{Son}).
Likewise, time-dependent left-invariant vector
fields on \emph{finite-dimensional}
Lie groups are common in geometric control theory.\\[2.3mm]
For older results concerning
continuous and differentiable
parameter dependence of flows
for differential equations
with measurable right-hand sides, see \cite{SvM,KaS} and the references therein.
They do not require the right-hand side
to be smooth or analytic in the space variable,
and the conclusions are limited to continuity of the flow
(for fixed $t_0$),
or joint continuity of its differentials in space variables
and parameters.
Flows for time-dependent
analytic vector fields on finite-dimensional
real-analytic manifolds were
studied systematically in works by Jafarpour and Lewis
(like \cite{JL1}),
including the case of non-compact
manifolds for which vector fields
need not be complete and the maximal flows need
not be globally defined.
Germs around a point play a major role in their approach.
The global perspective provided by Theorem~A
is complementary to the findings in
these works,
as well as the smoothness property
in part~(d) of the theorem.\\[2.3mm]
Our studies are also complementary
to the approach of chronological calculus,
where flows are studied via their action
on spaces of smooth functions (cf.\ \cite{CRO}).
Results like Theorem~A are not available there.\\[3mm]
{\bf Another application of Theorem~D.}
We mention that Theorem~D also has consequences
for infinite-dimensional holomorphy,
more specifically
the theory of analytic functions on
locally convex direct limits.
Notably, Dahmen's Theorem
(see \cite[Theorem~A]{Dah}) can be generalized by means
of Theorem~D, as follows:\\[3mm]
{\bf Theorem~F.}
\emph{Let $E_1\sub E_2\sub\cdots$ be complex locally convex spaces
such that each inclusion map $E_n\to E_{n+1}$
is continuous and linear.
Give $E:=\bigcup_{n\in\N}E_n$
the complex vector space structure making each~$E_n$
a vector subspace, and endow~$E$
with the locally convex direct limit topology,
which we assume Hausdorff.
Let $U_n\sub E_n$ be an open convex subset
for each $n\in\N$ such that $U_1\sub U_2\sub\cdots$;
then $U:=\bigcup_{n\in\N}U_n$ is open in~$E$.
If $F$ is a complex locally convex space and $f\colon U\to F$
a function such that
$f_n:=f|_{U_n}$ is complex analytic on $U_n\sub E_n$
for each $n\in\N$, with bounded image $f_n(U_n)\sub F$,
then~$f$ is complex analytic.}\\[4mm]
Dahmen~\cite{Dah} had to assume that each~$E_n$
is a normed space, that the norms are chosen such that
each inclusion map $E_n\to E_{n+1}$
has operator norm~$\leq 1$,
and that each~$U_n$ is the open unit ball in~$E_n$.\\[2.1mm]
Theorem~F actually implies a \emph{characterization} of complex analyticity
for mappings on open subsets of locally convex direct limits
(see Theorem~\ref{characteri});
the author is grateful to Rafael Dahmen (Karlsruhe Institute
of Technology) for discussions
which led to this application.\\[2.3mm]
\noindent{\bf Acknowledgement.}
Supported by Deutsche Forschungsgemeinschaft (DFG),
project GL 357/9-1.
\section{Preliminaries and notation}
We write $\N:=\{1,2,\ldots\}$
and $\N_0:=\N\cup\{0\}$.
All topological vector spaces
are\linebreak
assumed Hausdorff, unless the contrary is stated.
The word
``locally convex topological vector space''
will be abbreviated as ``locally convex space.''
If $q\colon E\to[0,\infty[$ is a seminorm
on a real or complex vector space~$E$,
let $B^q_\ve(x):=\{y\in E\colon q(y-x)<\ve\}$
be the open ball of radius $\ve>0$
around $x\in E$.
If $(E,\|\cdot\|)$
is a normed space and the norm is understood, we also write
$B^E_\ve(x)$ in place of $B^{\|\cdot\|}_\ve(x)$.
A locally convex space~$E$ is called \emph{sequentially complete}
if every Cauchy sequence in~$E$ converges in~$E$.
For basic concepts and notation concerning calculus and analytic
functions in locally convex spaces, see Appendix~\ref{appA}.
For vector-valued $\cL^p$-functions,
vector-valued absolutely
continuous functions and Carath\'{e}odory
solutions to differential equations in
locally convex spaces, see Appendix~\ref{secB}.
All manifolds or Lie groups are modelled
on locally convex spaces which may be infinite-dimensional.
If $G$ is a Lie group with Lie algebra
$\cg:=L(G):=T_eG$, then the map
$I_g\colon G\to G$, $h\mto ghg^{-1}$ is smooth for each
$g\in G$
and induces a Lie algebra automorphism
(and homeomorphism)
$\Ad_g:=T_eI_g$ of~$\cg$.
If $f\colon G\to H$ is a smooth group
homomorphism between Lie groups,
we let $L(f):=T_ef\colon T_eG\to T_eH$
be the corresponding continuous Lie algebra
homomorphism from $L(G)$ to $L(H)$.
Thus $\Ad_g=L(I_g)$.
If~$X$ is a topological
space and $K\sub X$ a subset, then a subset
$U\sub X$ is called a \emph{neighbourhood of~$K$ in~$X$}
if $K\sub U^0$ holds for the interior~$U^0$
of~$U$ in~$X$.\\[2.3mm]
The concepts of $L^p$-regularity and $L^p$-semiregularity
for a Lie group~$G$ were already explained in the introduction,
and need not be repeated.
A \emph{right evolution} to $[\gamma]\in L^1([0,1],\cg)$
is an absolutely continuous function $\eta\colon [0,1]\to G$
such that $\dot{\eta}=[t\mto \gamma(t).\eta(t)]$
and $\eta(0)=e$, using the right action
\[
TG\times G\to TG,\quad (v,g)\mto v.g:=T\rho_g(v)
\]
with $\rho_g\colon G\to G$, $h\mto hg$.
Right evolutions are unique if they exist
and we write $\Evol^r([\gamma]):=\eta$ (cf.\ (33) in \cite[Lemma~5.12]{MeR}
and \cite[Lemma~4.3.4]{Nik}).
If $[\gamma]\in L^p([0,1],\cg)$,
then $\Evol^r([\gamma])\in \AC_{L^p}([0,1],G)$
(cf.\ \cite[Lemma~4.1.23]{Nik}).
We shall use the following fact.
\begin{la}\label{la-left-right}
Let $G$ be a Lie group modelled on a sequentially
complete locally convex space, with Lie algebra
$\cg:=T_eG$. Let $p\in [1,\infty]$.
Then we have:
\begin{itemize}
\item[\rm(a)]
$[\gamma]\in L^p([0,1],\cg)$ has a right evolution
if and only if $-[\gamma]$ has a left\linebreak
evolution.
In this case,
\[
\Evol^r([\gamma])=\Evol(-[\gamma])^{-1},
\]
the inverse in the mapping group $C([0,1],G)$.
\item[\rm(b)]
$G$ is $L^p$-semiregular
if and only if each $[\gamma]\in L^p([0,1],\cg)$
has a right\linebreak
evolution.
\item[\rm(c)]
$G$ is $L^p$-regular if and only if
each $[\gamma]\in L^p([0,1],\cg)$
has a right evolution and $\Evol^r\colon
L^p([0,1],\cg)\to C([0,1],G)$ is smooth.
\end{itemize}
\end{la}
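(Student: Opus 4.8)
The plan is to establish the equivalences in Lemma~\ref{la-left-right} by translating statements about right evolutions into statements about left evolutions via the group inversion map. The key observation is the following computation: if $\eta\colon[0,1]\to G$ is absolutely continuous with $\eta(0)=e$, and we set $\zeta:=\eta^{-1}$ (pointwise inversion in the mapping group, so $\zeta(t)=\eta(t)^{-1}$), then $\zeta$ is again absolutely continuous with $\zeta(0)=e$, and we can relate the logarithmic derivatives. First I would recall that for a smooth curve the identity $0=\frac{d}{dt}\bigl(\eta(t)^{-1}\eta(t)\bigr)$ yields, after applying left and right translations, the relation $\delta^r(\eta^{-1})(t)=-\delta^l(\eta)(t)$ between the right logarithmic derivative $\delta^r(\zeta)(t):=\dot\zeta(t).\zeta(t)^{-1}$ and the left logarithmic derivative $\delta^l(\eta)(t):=\eta(t)^{-1}.\dot\eta(t)$; here $\delta^l(\eta)=\gamma$ means $\eta=\Evol(\gamma)$ and $\delta^r(\zeta)=\gamma$ means $\zeta=\Evol^r(\gamma)$.

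For part~(a) I would argue as follows. Suppose $-[\gamma]$ has a left evolution $\eta=\Evol(-[\gamma])$, so $\delta^l(\eta)=-\gamma$ almost everywhere and $\eta(0)=e$. Set $\zeta:=\eta^{-1}$ in $C([0,1],G)$; since inversion is smooth and $\eta$ is $AC_{L^p}$, the composite $\zeta$ is again $AC_{L^p}$ by the chain rule for absolutely continuous maps into Lie groups (as established in the $AC_{L^p}$ framework of~\cite{Nik}). The identity above gives $\delta^r(\zeta)(t)=-\delta^l(\eta)(t)=\gamma(t)$ for almost every~$t$, and $\zeta(0)=\eta(0)^{-1}=e$, so $\zeta$ is a right evolution of $[\gamma]$; by uniqueness $\Evol^r([\gamma])=\zeta=\eta^{-1}=\Evol(-[\gamma])^{-1}$. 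The converse direction is entirely symmetric, using that $[\gamma]\mto -[\gamma]$ is an involution. This proves~(a), and the displayed formula is immediate from the construction.

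Parts~(b) and~(c) then follow formally from~(a). For~(b), $G$ is $L^p$-semiregular precisely when every $[\gamma]\in L^p([0,1],\cg)$ admits a left evolution; since $[\gamma]\mto-[\gamma]$ is a bijection of $L^p([0,1],\cg)$, this holds if and only if every $-[\gamma]$ admits a left evolution, which by~(a) is equivalent to every $[\gamma]$ admitting a right evolution. For~(c), assuming $L^p$-semiregularity, the smoothness of $\Evol^r$ is equivalent to the smoothness of $\Evol$: by~(a) we have $\Evol^r=\iota\circ\Evol\circ(-\id)$, where $-\id\colon L^p([0,1],\cg)\to L^p([0,1],\cg)$ is the (continuous linear, hence smooth) negation map and $\iota\colon C([0,1],G)\to C([0,1],G)$ is the inversion of the mapping group, which is smooth because $G$ is a Lie group and inversion in $C([0,1],G)$ is smooth. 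Conversely $\Evol=\iota\circ\Evol^r\circ(-\id)$, so each is smooth iff the other is; combining with the semiregularity equivalence from~(b) yields the stated characterization of $L^p$-regularity.

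The only delicate point, and the step I would treat most carefully, is the claim that $\zeta=\eta^{-1}$ is again an $AC_{L^p}$-map and that its right logarithmic derivative is computed by the pointwise formula $\delta^r(\eta^{-1})=-\delta^l(\eta)$ almost everywhere. In the smooth setting this is a standard identity, but here $\eta$ is only absolutely continuous, so one must invoke the appropriate chain rule for $AC_{L^p}$-maps composed with the smooth inversion $G\to G$ and verify that the derivative formula holds off a null set; the sequential completeness of the modelling space guarantees the requisite integrals exist and the $AC_{L^p}$-structure of the mapping group is available. All of this is precisely the content of the absolutely continuous calculus developed in~\cite{Nik} and~\cite{MeR}, so I would cite those results rather than reprove them, keeping the proof short.
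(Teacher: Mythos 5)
Your proposal is correct and follows essentially the same route as the paper: the paper proves (a) by citing the inversion/logarithmic-derivative computation of \cite[Lemma~5.12, (35)]{MeR} (cf.\ \cite[Lemma~4.3]{Nik}), which is exactly the identity $\delta^r(\eta^{-1})=-\delta^l(\eta)$ you derive, and declares (b) and (c) immediate from (a). Your extra care about the $AC_{L^p}$ chain rule for the pointwise inversion and your explicit factorization $\Evol^r=\iota\circ\Evol\circ(-\id)$ in (c) simply spell out what the paper leaves to the cited references.
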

\begin{proof}
(a) can be proved like~(35)
in \cite[Lemma~5.12]{MeR}
(cf.\ (4.15) in \cite[Lemma~4.3.3]{Nik}).
Assertions (b) and~(c) follow immediately from~(a).
\end{proof}
\begin{numba}\label{muchi}
An ascending sequence
$E_1\sub E_2\sub\cdots$
of locally convex spaces $E_n$ is called a \emph{direct sequence
of locally convex spaces} if $E_n$ is a vector subspace of $E_{n+1}$
for each $n\in\N$ and the inclusion map $E_n\to E_{n+1}$
is continuous linear. We shall always endow
\[
E:=\bigcup_{n\in\N}E_n
\]
with the unique vector space structure making each $E_n$
a vector subspace.
The \emph{locally convex direct limit
topology} on $E$ is the finest (not necessarily Hausdorff)
locally convex vector space topology on~$E$
which makes each inclusion map $E_n\to E$ continuous;
we then write $E=\dl\,E_n$.\vspace{-1.1mm}
If each bounded subset of~$E$ is contained in some~$E_n$
and bounded in~$E_n$, then
$E=\bigcup_{n\in\N}E_n$ is called a \emph{regular}
direct limit.
Recall that a \emph{Silva space} (or (DFS)-space)
is a locally convex space~$E$ which can be written as $E=\dl\,E_n$\vspace{-.9mm}
for a direct sequence of Banach spaces~$E_n$, such that all
bonding maps $E_n\to E_{n+1}$ are compact operators.
If~$E$ is Silva, then $E=\bigcup_{n\in\N}E_n$ is
Hausdorff, complete, and regular
(see \cite{Flo}, \cite{FaW}).
Let~$F_n$ be the closure of $E_n$ in $E_{n+1}$.
We endow~$F_n$ with the norm induced by the norm of~$E_{n+1}$.
Let $B^{E_n}_1(0)$ be the open unit ball in~$E_n$
and $K_n:=\wb{B^{E_n}_1(0)}$
be its closure in~$E_{n+1}$.
Then~$K_n$ is compact and metrizable and hence
separable. The countable union
\[
\bigcup_{m\in\N} m\,K_n
\]
is dense in $F_n$ (as it contains~$E_n$)
and is separable, whence also~$F_n$
is separable. We now have a direct sequence
\[
E_1\sub F_1\sub E_2\sub F_2\sub\cdots
\]
of locally convex spaces. Thus $E=\dl\, F_n$\vspace{-.5mm}
and also this locally convex direct limit is regular.
By \cite[Corollary~3.11]{FMP},
we get that
\[
L^1(I,E)=\dl \, L^1(I,F_n)\vspace{-1.1mm}
\]
as a locally convex space.
As a consequence,
$L^1(I,E)$ also is the locally convex direct limit of the direct sequence
\[
L^1(I,E_1)\sub L^1(I,F_1)\sub L^1(I,E_2)\sub\cdots
\]
and also the locally convex direct limit
of any subsequence thereof. Thus
\begin{equation}\label{L1-is-DL}
L^1(I,E)=\dl\, L^1(I,E_n).\vspace{-1.1mm}
\end{equation}
\end{numba}
We recall the theorem on Lipschitz-continuous
dependence of fixed points on parameters
(see, e.g., \cite[Proposition 2.3.26\,(b)]{GaN}).
This well-known fact will allow Theorem~D
to be applied in the proof of Theorem~C.
\begin{la}\label{par-lip}
Let $(P,d_P)$ be a metric space, $(X,d)$
be a complete metric space with $X\not=\emptyset$
and $f\colon P\times X\to X$ be a function.
Given $p\in P$, write $f_p:=f(p,\cdot)\colon X\to X$.
Assume there exist $\theta\in [0,1[$ and $L\in[0,\infty[$
such that
\begin{eqnarray*}
(\forall p\in P)\,(\forall x,y\in X) & & \;\;\;\;\hspace*{.41mm}
d(f_p(x),f_p(y))\leq\theta \,d(x,y)
\;\;\mbox{and}\\
(\forall p,q\in P)\,(\forall x\in X) & & d(f(p,x),f(q,x))\leq L\, d_P(p,q).
\end{eqnarray*}
Then $f_p$ has a unique fixed point $x_p\in X$ for
each $p\in P$, and
\[
d(x_p,x_q)\;\leq\;\frac{L}{1-\theta}\, d_P(p,q)
\;\;\;\mbox{for all $\, p,q\in P$.}
\]
\end{la}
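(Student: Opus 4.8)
The plan is to invoke the Banach fixed point theorem for existence and uniqueness of the fixed points, and then to derive the Lipschitz estimate by a single triangle-inequality computation combining the two hypotheses.

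First I would note that for each fixed $p\in P$, the hypothesis in the first line says precisely that $f_p\colon X\to X$ is a strict contraction with constant $\theta\in[0,1[$ on the complete, nonempty metric space $(X,d)$. By the contraction mapping principle, $f_p$ therefore possesses a unique fixed point, which I denote by $x_p\in X$. This already establishes the first assertion of the lemma.

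Next I would estimate $d(x_p,x_q)$ for arbitrary $p,q\in P$. Using the fixed-point identities $x_p=f_p(x_p)$ and $x_q=f_q(x_q)$ and inserting the intermediate point $f_p(x_q)$, the triangle inequality gives
\[
d(x_p,x_q)\leq d(f_p(x_p),f_p(x_q))+d(f_p(x_q),f_q(x_q)).
\]
I would bound the first summand by $\theta\,d(x_p,x_q)$ using the contraction hypothesis in the first variable, and the second summand, which equals $d(f(p,x_q),f(q,x_q))$, by $L\,d_P(p,q)$ using the Lipschitz hypothesis in the parameter (applied with $x=x_q$). This yields $d(x_p,x_q)\leq\theta\,d(x_p,x_q)+L\,d_P(p,q)$, and since $d(x_p,x_q)$ is a finite real number and $1-\theta>0$, I may rearrange to obtain the claimed bound $d(x_p,x_q)\leq\frac{L}{1-\theta}\,d_P(p,q)$.

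I do not expect a serious obstacle here: the statement is a classical quantitative refinement of the Banach fixed point theorem, and the entire content lies in the two-line triangle-inequality estimate above. The only point demanding any care is the rearrangement of the inequality, which is legitimate precisely because $d$ takes finite values (so no indeterminate expression of the form $\infty-\infty$ can arise) and because $\theta<1$ guarantees that $1-\theta$ is a strictly positive factor.
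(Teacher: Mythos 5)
Your proposal is correct and follows essentially the same route as the paper: the paper also invokes Banach's Fixed Point Theorem for existence and uniqueness and then obtains the estimate from the \emph{a priori} bound $d(x_p,x_q)\leq\frac{1}{1-\theta}\,d(f_p(x_q),x_q)$ combined with $d(f_p(x_q),f_q(x_q))\leq L\,d_P(p,q)$. Your triangle-inequality computation is just the standard proof of that a priori estimate written out explicitly, so the two arguments coincide in substance.
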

\begin{proof}
Banach's Fixed Point Theorem (the Contraction Mapping
Principle) yields existence and uniqueness of
the fixed point~$x_p$ of $f_p$. For $p,q\in P$,
the \emph{a priori} estimate, applied to the contraction~$f_p$,
gives $d(x_p,x_q)\leq\frac{1}{1-\theta}\,d(f_p(x_q),x_q)$.
Now $d(f_p(x_q),x_q)=d(f_p(x_q),f_q(x_q))=
d(f(p,x_q),f(q,x_q))\leq L\,d_P(p,q)$.
\end{proof} 
\section{Proof of Theorem~D}
Given $x\in U$, let us show that $f$ is continuous at~$x$.
We have $x\in U_m$ for some $m\in\N$; after passing to the subsequence
$E_m\sub E_{m+1}\sub\cdots$, we may assume that $x\in U_1$.
After replacing $U_n$ with $U_n-x$ for each $n\in \N$ and $f$ with
$g\colon U-x\to F$, $g(y):=f(x+y)$, we may assume that $x=0$.
Let~$p$ be a continuous seminorm on~$F$ and $\ve>0$.
For each $n\in\N$,
there exists a continuous seminorm~$q_n$ on~$E_n$ such that
\[
p(f_n(z)-f_n(y))\leq q_n(z-y)\quad\mbox{for all $\,y,z\in U_n$.}
\]
The ascending union
\[
V:=\bigcup_{n\in\N}\sum_{k=1}^n \big(B^{q_k}_{\ve 2^{-k}}(0)\cap 2^{-k}U_k\big)
\]
is an open $0$-neighbourhood in~$E$,
as~$V$ is convex, $0\in V$ and $V\cap E_m$
is open in~$E_m$ for each $m\in \N$.
Moreover,
\[
\sum_{k=1}^n 2^{-k}U_k\sub
\sum_{k=1}^n 2^{-k}U_n
\sub U_n\quad\mbox{for each $n\in\N$,}
\]
as $U_n$ is convex and $0\in U_n$.
If $y\in V$, then $y=\sum_{k=1}^n z_k$
for some $n\in\N$ and elements $z_k\in B^{q_k}_{\ve 2^{-k}}(0)\cap 2^{-k}U_k$
for $k\in\{1,\ldots, n\}$. Abbreviate
\[
y_k:=\sum_{j=1}^k z_j\;\,\mbox{for $k\in\{0,1,\ldots, n\}$;}
\]
thus $y_0=0$ and $y_n=y$. Moreover, $\{y_{k-1},y_k\}\sub U_k$ for all
$k\in\{1,\ldots, n\}$ and
$q_k(y_k-y_{k-1})=q_k(z_k)<\ve \, 2^{-k}$.
Hence
\begin{eqnarray*}
p(f(y)-f(0)) &\leq& \sum_{k=1}^n p(f(y_k)-f(y_{k-1}))
=\sum_{k=1}^n p(f_k(y_k)-f_k(y_{k-1}))\\
&\leq& \sum_{k=1}^n q_k(y_k-y_{k-1})
\leq \sum_{k=1}^n \ve \, 2^{-k} <\ve.
\end{eqnarray*}
Thus $p(f(y)-f(0))<\ve$ for all $y\in V$, and so
$f$ is continuous at~$0$. $\,\square$
\section{Proof of Theorem~E}
We shall use the following fact
(see Lemma~4.1.23 in \cite{Nik}
and its proof):
\begin{numba}\label{varnik}
Let $X$ be a topological
space, $E$ and $F$
be locally convex spaces
and\linebreak
$f \colon X\times E\to F$
be a continuous map such that $f(x,\cdot)\colon E\to F$
is linear for each $x\in X$.
If $a<b$ are real numbers, $p\in[1,\infty]$
and $\eta\colon [a,b]\to X$
is a continuous map, then $f\circ(\eta,\gamma)\in \cL^p([a,b],F)$
for each $\gamma\in\cL^p([a,b],E)$ and~the~map
\[
L^p([a,b],E)\to L^p([a,b],F),\quad [\gamma]\mto [f\circ(\eta,\gamma)]
\]
is continuous and linear.
\end{numba}
Another simple lemma is helpful, which uses
notation as in~\ref{at-point}.
\begin{la}\label{Mvalueddd}
Let $W$ be an open subset of a locally convex space~$E$
and $f\colon W\to M$ be a continuous map
to a $C^1$-manifold~$M$ modelled on a locally
convex space~$F$.
For $(x,y)\in TW=W\times E$,
let $I_{x,y}\sub\R$
be an open $0$-neighbourhood such that $x+ty\in W$
for all $t\in I_{x,y}$.
If
\[
\kappa_{x,y}\colon I_{x,y}\to M,\quad t\mto f(x+ty)
\]
is differentiable at~$0$ for all $(x,y)\in W\times E$
and the map
\[
W\times E\to TM,\quad (x,y)\mto\dot{\kappa}_{x,y}(0)
\]
is continuous, then $f$ is $C^1$ and $Tf(x,y)=\dot{\kappa}_{x,y}(0)$
for all $(x,y)\in W\times E$.
\end{la}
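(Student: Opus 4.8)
The plan is to reduce the statement to the local, vector-space-valued situation by composing $f$ with charts of~$M$, and then to apply the standard Bastiani criterion (see Appendix~\ref{appA}): a continuous map between open subsets of locally convex spaces is $C^1$ if and only if all its directional derivatives exist and the resulting differential is continuous. Concretely, I would fix $x_0\in W$, choose a chart $\phi\colon U_\phi\to V_\phi\sub F$ of~$M$ around $f(x_0)$, and set $W_0:=f^{-1}(U_\phi)$, which is an open neighbourhood of~$x_0$ in~$W$ since~$f$ is continuous. Recall that $f$ is $C^1$ as a map to~$M$ precisely when it is continuous and $g:=\phi\circ f|_{W_0}\colon W_0\to V_\phi\sub F$ is $C^1$ for every such chart; as the sets $W_0$ cover~$W$ when $\phi$ ranges over an atlas, it suffices to show that each such~$g$ is $C^1$.

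The key step is to identify the directional derivatives of~$g$ with the data provided by the hypothesis. For $(x,y)\in W_0\times E$ and all $t$ near~$0$ we have $x+ty\in W_0$, so that $\phi\circ\kappa_{x,y}(t)=g(x+ty)$. By the very definition of differentiability of a curve into a manifold (as in~\ref{at-point}), the assumption that $\kappa_{x,y}$ is differentiable at~$0$ means exactly that the $F$-valued curve $t\mto g(x+ty)$ is differentiable at~$0$. Hence the directional derivative
\[
dg(x,y)=\lim_{t\to 0}\tfrac{1}{t}\big(g(x+ty)-g(x)\big)
\]
exists and is the principal part of $\dot\kappa_{x,y}(0)$ in the chart; writing $T\phi\colon TU_\phi\to V_\phi\times F$ for the tangent chart and $\pr_2\colon V_\phi\times F\to F$ for the second projection, this reads
\[
dg(x,y)=\pr_2\big(T\phi(\dot\kappa_{x,y}(0))\big).
\]

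Continuity of $dg\colon W_0\times E\to F$ then follows at once: the map $(x,y)\mto\dot\kappa_{x,y}(0)$ is continuous by hypothesis and carries $W_0\times E$ into $TU_\phi$ (because $\kappa_{x,y}(0)=f(x)\in U_\phi$ for $x\in W_0$), and both~$T\phi$ and~$\pr_2$ are continuous. Since~$g$ is itself continuous (being a composite of continuous maps), the Bastiani criterion gives that~$g$ is $C^1$, and therefore~$f$ is $C^1$. To read off the tangent map, I would use the chain rule $T\phi\circ Tf=T(\phi\circ f)$ on $TW_0=W_0\times E$ together with $T(\phi\circ f)(x,y)=(g(x),dg(x,y))=T\phi(\dot\kappa_{x,y}(0))$; injectivity of the tangent chart~$T\phi$ then yields $Tf(x,y)=\dot\kappa_{x,y}(0)$, as claimed.

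I do not expect a genuine obstacle here, only a point requiring care: the interface between manifold-valued and vector-valued differentiability. One must make sure that for $x$ in the chart domain~$W_0$ the curve $\kappa_{x,y}$ remains inside~$U_\phi$ for small~$t$, so that the chart computation of $dg(x,y)$ is legitimate, and that the differentiability of $\kappa_{x,y}$ into~$M$ transfers verbatim to the existence of $dg(x,y)$ in~$F$. Once these identifications are made precise, the whole argument is a routine application of the continuous-directional-derivative criterion combined with the continuity of the chart maps.
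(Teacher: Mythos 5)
Your proposal is correct and follows essentially the same route as the paper: localize via charts (using continuity of $f$ to get chart-saturated open sets), identify the directional derivatives of the local representative $\phi\circ f$ with $d\phi(\dot\kappa_{x,y}(0))$, deduce their continuity from the hypothesis, and invoke the $C^1$-criterion together with the chain rule to recover $Tf(x,y)=\dot\kappa_{x,y}(0)$. The only cosmetic difference is that you argue pointwise from the definition of differentiability of manifold-valued curves where the paper cites its chain-rule item~\ref{chain-pw}; the substance is identical.
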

\begin{proof}
Since $W$ can be covered with open subsets
whose images under $f$ are contained in a chart domain,
we may assume that $f(W)\sub U$
for a chart $\phi\colon U\to V\sub F$ of~$M$.
By~\ref{chain-pw}, $\phi\circ f$ has all directional derivatives
and these are
\[
d(\phi\circ f)(x,y)=\frac{d}{dt}\Big|_{t=0}
\phi(\kappa_{x,y}(t))=d\phi\big(\dot{\kappa}_{x,y}(0)\big)
\]
and hence continuous in $(x,y)\in W\times E$.
Thus $\phi\circ f$ is $C^1$ and hence so is~$f$, by the Chain Rule
for $C^1$-functions, with
$Tf(x,y)=T\phi^{-1}\big(T(\phi\, \circ f)(x,y)\big)
=T\phi^{-1}(T\phi (\dot{\kappa}_{x,y}(0)))=\dot{\kappa}_{x,y}(0)$.
\end{proof}
{\bf Proof of Theorem~E.}
Since $G$ is $L^p$-semiregular,
$L^p([0,1],\cg)$ can be made a group with neutral element~$[0]$
and group multiplication given by
\[
[\gamma]\odot[\eta]:=[\Ad(\Evol([\eta]))^{-1}.\gamma]+[\eta]
\]
(in view of \cite[Lemma~4.3.4\,(i)]{Nik},
we can proceed as in \cite[Definition~5.34]{MeR}).
By~\ref{varnik},
the right translation
\[
\rho_{[\eta]}\colon L^p([0,1],\cg)\to L^p([0,1],\cg),\quad
[\gamma]\mto [\gamma]\odot [\eta]
\]
is a continuous affine-linear map
for each $[\eta]\in L^p([0,1],\cg)$
and hence a homeomorphism (and $C^\infty$-diffeomorphism),
as also its inverse is a right translation.
For $\beta:=\Evol([\eta])$, the right translation
\[
\rho_\beta\colon C([0,1],G)\to C([0,1],G),\quad
\zeta\mto \zeta\beta
\]
is continuous. Since $\Evol$ is continuous at~$0$, we deduce from
\[
\Evol=\rho_\beta\circ \Evol\circ \, \rho_{[\eta]^{-1}}
\]
that $\Evol\colon \!\! L^p([0,1],\cg)\!\hspace{-.3mm}\to\hspace{.3mm}\!\! C([0,1],G)$
is continuous at~$\hspace{-.3mm}[\eta]$.
\hspace{-.3mm}Thus~$\Evol$ is~continuous.\\[2.3mm]
Let $\phi\colon U\to V$ be a $C^\infty$-diffeomorphism
from an open $e$-neighbourhood $U\sub G$
onto an
open $0$-neighbourhood $V\sub \cg$ such that $\phi(e)=0$
and $d\phi|_\cg=\id_\cg$.
Let $Y\sub U$ be an open identity neighbourhood
such that $Y=Y^{-1}$ and $YY\sub U$. Then $Z:=\phi(Y)\sub V$
is an open $0$-neighbourhood.
The map
\[
\mu\colon Z\times Z\to V,\quad
(x,y)\mto\phi(\phi^{-1}(x)\phi^{-1}(y))
\]
is smooth.
The set $C([0,1],Z)$ is an open $0$-neighbourhood in
$C([0,1],\cg)$; the set $C([0,1],Y)$
is an open identity neighbourhood in $C([0,1],G)$,
and the map
\[
\Phi\colon C([0,1],Y)\to C([0,1],Z),\quad \eta\mto\phi|_Y\circ \eta
\]
is a $C^\infty$-diffeomorphism (cf.\ \cite{GCX,GaN,PaS,AGS}).
As $\Evol\colon L^p([0,1],\cg)\to C([0,1],G)$
is continuous and takes $[0]$ to the constant function~$e$,
there exists an open $0$-neighbourhood
$\Omega\sub L^p([0,1],\cg)$
with $\Evol(\Omega)\sub C([0,1],Y)$.
For each $[\zeta]\in\Omega$,
the function $\Evol([\zeta])\colon [0,1]\to G$
is absolutely continuous with
$\Evol([\zeta])^{\cdot}=[t\mto\Evol([\zeta]).\zeta(t)]$.
Using~\ref{chain-abs}, we see that $\theta:=\phi\circ \Evol([\zeta])\colon [0,1]\to\cg$
is absolutely continuous with
$(\phi\circ\Evol([\zeta]))'=[t\mto d_2\mu(\theta(t),0,\zeta(t))]$.
Thus
\begin{equation}\label{zetataugamma}
\theta(t)=\int_0^t\theta'(s)\,ds
=\int_0^td_2\mu(\theta(s),0,\zeta(s))\,ds\mbox{ for all $t\in [0,1]$.}
\end{equation}
We now show that
the directional derivative
$d(\Phi\circ \Evol|_\Omega)(0,[\gamma])$ exists in $C([0,1],\cg)$
and is given~by
\begin{equation}\label{theder0}
\frac{d}{d\tau}\Big|_{\tau=0}\Phi(\Evol(\tau[\gamma]))=
\left(t\mto \int_0^t\gamma(s)\,ds\right),
\end{equation}
for each $\gamma\in \cL^p([0,1],\cg)$.
There is $\rho>0$ such that $\tau[\gamma]\in\Omega$
for all $\tau\in\,]{-\rho},\rho[$.
Applying (\ref{zetataugamma})
with $\tau\gamma$ in place of~$\zeta$,
we obtain
\[
\phi(\Evol(\tau[\gamma])(t))=
\tau \int_0^td_2\mu(\theta_\tau(s),0,\gamma(s))\,ds\;\;
\mbox{for all $\, t\in [0,1]$,}
\]
with $\theta_\tau:=\phi\circ\Evol(\tau[\gamma])$.
The function
\[
f\colon Z\times \cg\to\cg,\quad (x,y)\mto d_2\mu(x,0,y)-y
\]
is smooth and $I\colon L^p([0,1],\cg)\to C([0,1],\cg)$, $[\zeta]\mto I([\zeta])$
with
\[
I([\zeta])(t)=\int_0^t\zeta(s)\, ds
\]
is continuous linear.
For $0\not=\tau\in\,]{-\rho},\rho[$, using $\phi\circ \Evol(0)=0$
we see that
\[
\Delta_\tau := \frac{\phi\circ \Evol(\tau [\gamma])-\phi\circ \Evol(0)}{\tau}-I([\gamma])
=\frac{\phi\circ \Evol(\tau\gamma)}{\tau}-I([\gamma])
\]
satisfies
\[
\Delta_\tau(t) \!=\!\!\!
\int_0^t\! d_2\mu(\phi(\Evol(\tau[\gamma])(s)),\gamma(s))-\gamma(s)\,ds
\!= \!\!\! \int_0^t \!f(\phi(\Evol(\tau[\gamma])(s)),\gamma(s))\,ds.
\]
Let $q$ be a continuous seminorm on~$\cg$.
To establish (\ref{theder0}), we show that
\[
q(\Delta_\tau(t))\to 0\quad\mbox{as $\tau\to 0$,}
\]
uniformly in $t\in [0,1]$. The map~$f$
is $C^2$. Since $f(x,\cdot)\colon\cg\to\cg$
is linear for each $x\in Z$, we have $f(x,0)=0$.
Since $\mu(0,y)=y$ for each $y\in Z$,
we have $d_2\mu(0,0,y)=y$ for each $y\in\cg$
and hence $f(0,y)=0$. Now standard arguments
show that there exist
a $0$-neighbourhood $X\sub Z$, a $0$-neighbourhood $O\sub \cg$
and continuous seminorms
$q_1,q_2$ on~$\cg$ such that
\[
q(f(x,y))\leq q_1(x)q_2(y)\quad\mbox{for all $(x,y)\in X\times O$}
\]
see Lemma~\ref{new}.
Due to the linearity in the second argument,
we may assume that $O=\cg$.
Let $\ve>0$.
By continuity of $\Evol$, there exists
$\delta\in \,]0,\rho]$ with
\[
\phi\circ \Evol(\tau[\gamma])
\in C([0,1],X\cap B^{q_1}_\ve(0))
\;\;\mbox{for all $\tau\in \,]{-\delta},\delta[$.}
\]
For all $0\not=z\in \,]{-\delta},\delta[$,
we deduce that
\begin{eqnarray*}
q(\Delta_\tau(t))&\leq &
\int_0^t q(f(\phi(\Evol(\tau[\gamma])(s),\gamma(s)))\,ds\\
&\leq&
\int_0^t\underbrace{q_1(\phi(\Evol(\tau[\gamma])(s))}_{<\ve}
q_2(\gamma(s))\,ds\\
&
\leq & \ve\,\|q_2\circ\gamma\|_{\cL^1}
\leq \ve \, \|q_2\circ\gamma\|_{\cL^p}
=\ve \,\|[\gamma]\|_{L^p,q_2},
\end{eqnarray*}
using H\"{o}lder's inequality for the last estimate.
As the right-hand side is independent of $t\in [0,1]$
and can be made arbitrarily small, the desired
uniform convergence of $q\circ \Delta_\tau$
is established.\\[2.3mm]
By (\ref{theder0}), we have
$\frac{d}{d\tau}\big|_{\tau=0}\Phi(\Evol(\tau[\gamma]))=I([\gamma])$,
whence the curve $\tau\mto\Evol(\tau[\gamma])$
is differentiable at $\tau=0$
in the sense of~\ref{dotty}, with derivative
(as in (\ref{stasta}))~given~by
\[
\frac{d}{d\tau}\Big|_{\tau=0}\Evol(\tau[\gamma])=T\Phi^{-1}
(0, I([\gamma])).
\]
Since $\Evol$ is a homomorphism of groups from
$(L^p([0,1],\cg),\odot)$ to $C([0,1],G)$
with pointwise multiplication, for each $[\eta]\in L^p([0,1],\cg)$
we have
\[
\Evol|_{\Omega[\eta]}=\rho_{\Evol([\eta])}\circ \Evol|_\Omega\circ \rho_{[\eta]^{-1}}|_{\Omega[\eta]}
\]
on the open set $\Omega[\eta]\!:=\!\rho_{[\eta]}(\Omega)\!\sub \! L^p([0,1],\cg)$.
As $\rho_{[\eta]^{-1}}\colon \!L^p([0,1],\cg) \!\to \! L^p([0,1],\cg)$\linebreak
is an affine-linear homeomorphism and given by
\[
[\zeta]\mto [t\mto\Ad(\Evol([\eta])(t))(\zeta(t)-\eta(t))]
\]
(cf.\ \cite[Definition~5.34]{MeR} and \cite[Lemma~4.3.4\,(i)]{Nik}),
we deduce that
\[
\kappa_{[\eta],[\gamma]}(\tau):=\Evol([\eta]+\tau[\gamma])\in C([0,1],G)
\]
for $[\gamma]\!\in\!  L^p([0,1],\cg)$ and real numbers $\tau$ close to~$0$ is differentiable at $\tau=0$,
with
\[
\dot{\kappa}_{[\eta],[\gamma]}(0)
=T\rho_{\Evol([\eta])}T\Phi^{-1}\left(0,I(\Ad^\wedge \circ(\Evol([\eta]),\gamma))\right),
\]
using the smooth map $\Ad^\wedge\colon G\times\cg\to\cg$, $(g,v)\mto\Ad(g)(v)$.
Thus
\begin{equation}\label{enabindu}
\dot{\kappa}_{[\eta],[\gamma]}(0)
=\sigma\left(T\Phi^{-1}\Big(0,
I(\Ad^\wedge \circ(\Evol([\eta]),\gamma))\Big),\Evol([\eta])\right)
\end{equation}
using the smooth right action $T(C([0,1],G))\times C([0,1],G)\to T(C([0,1],G))$,
$(v,\beta)\mto T\rho_\beta(v)$.
Since $\Ad^\wedge$ is linear in the second argument and~$C^\infty$,
the~map
\[
C([0,1],G)\times L^p([0,1],\cg)\to L^p([0,1],\cg),\;\;
(\beta,[\zeta])\mto [\Ad^\wedge\circ (\beta,\zeta)]
\]
is smooth (see \cite[Proposition~4.3.5]{Nik}).
As $\Evol\colon L^p([0,1],\cg)\to C([0,1],G)$
is continuous, (\ref{enabindu}) shows
that $\dot{\kappa}_{[\eta],[\zeta]}\in T(C([0,1],G))$
is continuous in $([\eta],[\zeta])\in L^p([0,1],\cg)\times L^p([0,1],\cg)$.
Thus $\Evol$ is $C^1$, by Lemma~\ref{Mvalueddd},
with $T\Evol([\eta],[\zeta])=\dot{\kappa}_{[\eta],[\zeta]}$
given by~(\ref{enabindu}).
If $\Evol$ is $C^k$ for some $k\in\N$,
then the right-hand side of~(\ref{enabindu}),
and hence $T\Evol([\eta],[\zeta])\in T(C([0,1],G))$,
is a $C^k$-function of $([\eta],[\zeta])$
as before. In view of~\ref{indu-Ck},
this implies that $\Evol$ is $C^{k+1}$.
By the preceding inductive argument,
$\Evol$ is $C^k$ for all $k\in\N$,
and thus $\Evol$ is~$C^\infty$. $\,\square$
\begin{rem}
\hspace*{-.8mm}In Theorem~E,
the evolution map $\Evol\colon \!L^p([0,1],\cg)\!\to \! C([0,1],G)$
is smooth also with respect to the topology~$\cO_{L^1}$
on $L^p([0,1],\cg)$ induced by $L^1([0,1],\cg)$
(which is coarser than the usual $L^p$-topology).
In fact, $\Evol$ is continuous
with respect to $\cO_{L^1}$ by the version of
\cite[Theorem~1.9]{GaH}
described in \cite[Remark~10.21]{GaH}
(as we know from Theorem~E
that~$G$ is $L^p$-regular).
We can now repeat the proof
of Theorem~E as in the case
of an $L^1$-semiregular Lie group,
replacing $L^1([0,1],\cg)$ with
its vector subspace $(L^p([0,1],\cg),\cO_{L^1})$.
\end{rem}
\section{Tools to establish {\boldmath$L^p$-semiregularity}}\label{sec-semi}
We develop tools to allow a given function $\eta\colon [0,1]\to G$
to be recognized as the evolution
of a given $[\gamma]\in L^1([0,1],\cg)$.
The following lemma
will help us to prove Theorem~C.
\begin{la}\label{recog}
Let $G$ be a Lie group
modelled on sequentially complete locally
convex space~$E$, with Lie algebra $\cg:=T_e(G)$.
Let $\gamma\in \cL^1([0,1],\cg)$
and $\eta\colon [0,1]\to G$ be a continuous
function such that $\eta(0)=e$.
We assume that there exists a\linebreak
sequence
$(G_n)_{n\in\N}$ of Lie groups~$G_n$
modelled on sequentially complete locally convex spaces~$E_n$,
with Lie algebras $\cg_n:=T_e(G_n)$,
and a sequence of smooth group homomorphisms
$f_n\colon G\to G_n$ such that
the corresponding Lie algebra homomorphisms
$L(f_n)\!:=\!T_e(f_n)\colon \!\cg\! \to\! \cg_n$
separate points on~$\cg$.
Moreover, assume:
\begin{itemize}
\item[\rm(a)]
$\eta_n:=f_n\circ\eta\in \AC([0,1],G_n)$
holds for each $n\in\N$ and $\eta_n$
is the evolution of
$[L(f_n)\circ\gamma]\in L^1([0,1],\cg_n)$; and
\item[\rm(b)]
There exist a chart $\phi\colon U\to V\sub E$
of~$G$ with $\eta([0,1])\sub U$ and $\phi(e)=0$,
and charts $\phi_n\colon U_n\to V_n\sub E_n$
of~$G_n$ with $\eta_n([0,1])\sub U_n$ and $\phi_n(e)=0$
for $n\in\N$ such that
$f_n(U)\sub U_n$ and there exists a
continuous linear map
$\alpha_n\colon E\to E_n$ such that
$\phi_n\circ f_n|_U=\alpha_n\circ \phi$.
\end{itemize}
Then $\eta\in \AC([0,1],G)$
and $\eta$ is the evolution $\Evol([\gamma])$ of~$[\gamma]$.
\end{la}
\begin{proof}
The sets $W:=(d\phi|_\cg)^{-1}(V)\sub\cg$
and $W_n:=(d\phi_n|_{\cg_n})^{-1}(V_n)\sub\cg_n$
for $n\in\N$ are open $0$-neighbourhoods;
moreover, $\psi:=(d\phi|_\cg)^{-1}\circ \phi\colon U\to W$
and $\psi_n:=(d\phi_n|_{\cg_n})^{-1}\circ\phi_n\colon U_n \to W_n$
are $C^\infty$-diffeomorphisms such that
\begin{equation}\label{better-v}
L(f_n)\circ \psi=\psi_n\circ f_n|_U,
\end{equation}
using that $d\phi_n|_{\cg_n}\circ L(f_n)=\alpha\circ d\phi|_\cg$.
By construction, $d\psi|_\cg=\id_\cg$ and $d\psi_n|_{\cg_n}=\id_{\cg_n}$
for all $n\in\N$.
The sets
$D:=\{(x,y)\in W\times W\colon \psi^{-1}(x)\psi^{-1}(y)\in U\}$
and
\[
D_n:=\{(x,y)\in W_n\times W_n\colon \psi_n^{-1}(x)\psi_n^{-1}(y)\in U_n\}
\]
are open in $W\times W$ and $W_n\times W_n$, respectively.
We consider the smooth functions
\[
\mu\colon D\to W,\quad (x,y)\mto\psi(\psi^{-1}(x)\psi^{-1}(y))
\]
and $\mu_n\colon D_n\to W_n$, $(x,y)\mto\psi_n(\psi_n^{-1}(x)\psi_n^{-1}(y))$.
Let $\lambda_g\colon G\to G$, $h\mto gh$ be left translation
by $g\in G$.
Given $x\in W$, the set $D_x:=\{y\in W\colon (x,y)\in D\}$
is an open $0$-neighbourhood, and we can
consider the $C^\infty$-map
$\lambda_x\colon D_x\to W$,\linebreak
$y\mto\mu(x,y)$.
By construction, $\psi\circ \lambda_g=\lambda_{\psi(g)}\circ \psi$
holds on an open identity neighbourhood in~$G$, entailing that
\begin{equation}\label{rel-tra}
d\psi\circ T\lambda_g|_\cg=
d\lambda_{\psi(g)}\circ T\psi|_\cg=
d\lambda_{\psi(g)}(0,\cdot)\quad\mbox{for all $\,g\in U$.}
\end{equation}
Given $n\in\N$ and $x\in W_n$, we define
$D_x\sub W_n$ and $\lambda_x\colon D_x\to W_n$
analogously, using~$\mu_n$; we obtain
\begin{equation}\label{rel-tra2}
d\psi_n\circ T\lambda_g|_{\cg_n}=
d\lambda_{\psi(g)}(0,\cdot)\quad \mbox{for all $g\, \in U_n$,}
\end{equation}
where $\lambda_g\colon G_n\to G_n$ is left translation by~$g$.\\[2.3mm]
Since $\eta_n=\Evol([L(f_n)\circ\gamma])$,
we have $\dot{\eta}_n=[t\mto T\lambda_{\eta_n(t)}(L(f_n)(\gamma(t))]$.
Using~(\ref{rel-tra2}), we deduce that
\begin{equation}\label{wusch}
\hspace*{-1.4mm}(\psi_n\! \circ \eta_n)'\!\!=\! [t\!\mto\! d\psi_n (T\lambda_{\eta_n(t)}(L(f_n)(\gamma(t)))]
\!=\! [t\!\mto\! d\lambda_{\psi_n(\eta_n(t))}(0,L(f_n)(\gamma(t)))].\!\!
\end{equation}
For $g\in U$, on some open identity neighbourhood of~$G$
we have
\[
L(f_n)\circ \! \lambda_{\psi(g)}\! \circ \psi
=L(f_n)\circ \psi\circ \lambda_g
=\psi_n\circ f_n\! \circ \lambda_g
= \psi_n\circ \lambda_{f_n(g)}\! \circ f_n
=\lambda_{\psi_n(f_n(g))}\circ f_n,
\]
using (\ref{better-v}) for the second equality.
As a consequence, for all $g\in U$,
\begin{equation}\label{all-in-1}
L(f_n)\circ d\lambda_{\psi(g)}(0,\cdot)
=L(f)\circ d\lambda_{\psi(g)}\circ T\psi|_\cg
=d\lambda_{\psi_n(f_n(g)}(0,\cdot)\circ L(f_n).
\end{equation}
Abbreviate
\[
\theta:=\psi\circ \eta\quad\mbox{and}\quad
\theta_n:=\psi_n\circ\eta_n
\]
for $n\in\N$ and define $\Theta\in \AC([0,1],\cg)$ via
\[
\Theta(t):=\int_0^t d_2\mu(\theta(s),0,\gamma(s))\,ds
=\int_0^t d\lambda_{\theta(s)}(0,\gamma(s))\,ds
\quad \mbox{for $\, t\in[0,1]$,}
\]
using partial differentials.
We claim that $\theta=\Theta$.
If this is true, then $\eta=\psi^{-1}\circ\theta$ is absolutely
continuous and
\[
\dot{\eta} = (\psi^{-1}\circ \, \theta)^{\cdot}
\! = [t\mto T\psi^{-1}T\lambda_{\theta(t)}(0,\gamma(t))]
= [t\mto T\lambda_{\eta(t)}(\gamma(t))]
=[t\mto \eta(t).\gamma(t)],
\]
using~(\ref{rel-tra}) for the third equality.
Thus $\eta=\Evol([\gamma])$.\\[2.3mm]
To establish the claim, for $t\in [0,1]$ we observe that
\begin{eqnarray*}
L(f_n)(\Theta(t)) &=&
\int_0^t L(f_n)(d\lambda_{\theta(s)}(0,\gamma(s)))\,ds
=
\int_0^td\lambda_{\theta_n(s)}(0,(L(f_n)\circ\gamma)(s))\,ds\\
&= &\theta_n(t)=\psi_n(f_n(\eta(t)))
=L(f_n)(\psi(\eta(t))
= L(f_n)(\theta(t))
\end{eqnarray*}
for all $n\in\N$,
using (\ref{all-in-1}) for the second equality,
(\ref{wusch}) for the third, and (\ref{better-v})
for the fifth equality.
As the $L(f_n)$ separate points on~$\cg$,
we infer $\Theta(t)=\theta(t)$.
\end{proof}
\begin{rem}\label{single-hom}
We shall apply Lemma~\ref{recog}
in the special case that
$G$ and $H$ are Lie groups
modelled on sequentially complete locally convex spaces and
$f\colon G\to H$ is a smooth
group homomorphism such that $L(f)$ is injective,
using $G_n:=H$ and $f_n:=f$ for all $n\in\N$.
\end{rem}
\begin{rem}\label{recog-left-right}
Of course,
evolutions (i.e., left evolutions)
can be replaced with right evolutions
in Lemma~\ref{recog}.
To see this, we can use
the mechanism described in Lemma~\ref{la-left-right};
or we can simply replace left translations with
right translations in the proof
of Lemma~\ref{recog},
and the natural left action of~$G$ on $TG$ with the natural
right action.
\end{rem}
\section{Local preparations for Theorem~C}\label{sec-loc}
We prove preparatory results
which will be used in the proof of Theorem~C.
Throughout this section, we shall use the following
notation:
\begin{numba}\label{convents}
Let $m\in\N$ and let $\|\cdot\|$
be the maximum norm on~$\C^m$
given by $\|z\|:=\max\{|z_1|,\ldots,|z_m|\}$
for $z=(z_1,\ldots, z_m)\in\C^m$.
If $z_k=x_k+iy_k$ for $k\in\{1,\ldots,m\}$ with $x_k,y_k\in\R$,
write
\[
\Repart(z):=(x_1,\ldots,x_m)\quad\mbox{and}\quad
\Impart(z):=(y_1,\ldots,y_m).
\]
Let $\tau\colon \C^m\to\C^m$, $z\mto\bar{z}:=(\wb{z_1},\ldots,\wb{z_m})$
be complex conjugation.
For $\ve>0$,
consider the open subsets
$W_\ve:=\;]{-2-\ve},2+\ve[^m+\, i\;]{-\ve},\ve[^m$
and
\[
V_\ve\, :=\, \frac{1}{2}W_\ve = \;\Big]{-1-\frac{\ve}{2}},1+\frac{\ve}{2}\Big[^m+\,
i\;\Big]{-\frac{\ve}{2}},\frac{\ve}{2}\Big[^m
\]
of~$\C^m$. Note that for $z=x+iy\in W_\ve$ with $x,y\in\R^m$, we have
\begin{equation}\label{price}
\|z\|\leq\|x\|+\|y\|<2+2\ve.
\end{equation}
Let
$\cL(\C^m)$
be the Banach space space of all $\C$-linear
mappings $\alpha\colon \C^m \to\C^m$,
endowed with the operator norm
$\|\alpha\|_{\op}:=\max_{\|u\|\leq 1}
\|\alpha(u)\|$.
\end{numba}
\begin{numba}
For each open set $U\sub\C^m$ and finite-dimensional complex
Banach space $(E,\|\cdot\|_E)$,
the space $BC(U,E)$ of bounded
continuous functions $\theta\colon U\to E$
is a Banach space with respect to the supremum norm
given by
\[
\|\theta\|_\infty:=\sup\{\|\theta(z)\|_E\colon z\in U\}\;\;
\mbox{for $\,\theta\in BC(U,E)$.}
\]
Choosing $(E,\|\cdot\|_E):=(\C^m,\|\cdot\|)$,
the vector subspace $\Hol_b(U,\C^m)$
of bounded complex analytic functions
is closed in $BC(U,\C^m)$ (as complex analyticity is characterized
by Cauchy's integral formula) and hence a Banach space
with respect to the supremum norm.
Let $BC^1_\C(U,\C^m)\sub \Hol_b(U,\C^m)$
be the vector subspace of all $\theta\in \Hol_b(U,\C^m)$
such that the mapping
$\theta'\colon U\to\cL(\C^m)$
is bounded and thus $\theta'\in BC(U,\cL(\C^m))$.
Then $BC^1_\C(U,\C^m)$ is a Banach space
with respect to the norm given by
\[
\|\theta\|_{BC^1}:=\max\{\|\theta\|_\infty,\|\theta'\|_\infty\}
\]
using $\|\cdot\|$ on~$\C^m$ and $\|\cdot\|_{\op}$ on $\cL(\C^m)$
to calculate the supremum norms.
Completeness follows from the observation that $BC^1_\C(U,\C^m)$ is a closed
topological vector subspace of the corresponding Banach space $BC^1(U,\C^m)$ of real
differentiable maps treated in \cite[Corollary 3.2.12]{Wal}.
Finally, we endow $\Hol(U,\C^m):=C^\infty_\C(U,\C^m)$
with the compact-open
topology \!(which coincides with the
compact-open $C^\infty\!$-topology,\,\cite[Lemma~A.7]{DaS}).
\end{numba}
\begin{numba}
For each $\ve>0$,
let $\Hol_b(V_\ve,\C^m)_\R$
be the set of all $\theta\in\Hol_b(V_\ve,\C^m)$
such that $\theta(V_\ve\cap\R^m)\sub\R^m$
(or, equivalently,
$\theta(\bar{z})=\overline{\theta(z)}$ for all
$z\in V_\ve$).
Let $BC^1_\C(W_\ve,\C^m)_\R$
be the set of all
$\theta\in BC^1_\C(W_\ve,\C^m)$
such that $\theta(W_\ve\cap\R^m)\sub\R^m$.
Then $\Hol_b(V_\ve,\C^m)_\R$ is a closed
real vector subspace of $\Hol_b(V_\ve,\C^m)$
and hence a real Banach space when endowed with the
supremum norm.
Similarly, $BC^1_\C(W_\ve,\C^m)_\R$
is a closed real vector subspace of $BC^1_\C(W_\ve,\C^m)$
and hence a real Banach space
with the induced norm.
Finally, we endow $\Hol(W_\ve,\C^m)_\R:=
\{\theta\in \Hol(W_\ve,\C^m)\colon \theta(W_\ve\cap\R^m)\sub\R^m\}$
with the compact-open topology.
\end{numba}
\begin{numba}\label{hence-lip}
If $\theta\in BC^1_\C(W_\ve,\C^m)_\R$,
then $\theta$ is Lipschitz continuous
with Lipschitz constant $\|\theta'\|_\infty\leq\|\theta\|_{BC^1}$,
since~$W_\ve$ is convex and the map $\theta'$ is bounded
(by a standard application of the Mean Value Theorem;
see, e.g., \cite[Lemma~1.5.3]{GaN}).
\end{numba}
\begin{numba}\label{shortcut}
If $\theta\in BC^1_\C(W_\ve,\C^m)_\R$
and $x+iy\in W_\ve$ with $x,y\in\R^m$,
then
\[
\|\Impart (\theta(x+iy))\|\leq \|\theta\|_{BC^1}\|y\|.
\]
In fact, using that $\Impart(\theta(x))=0$,
the Mean Value Theorem
shows that
\begin{eqnarray*}
\|\Impart(\theta(x+iy))\| &=& \|\Impart(\theta(x+iy)-\theta(x))\|\\
&\leq & \int_0^1 \|\Impart(\theta'(x+ity)(iy))\|\, dt
\leq \|\theta\|_{BC^1}\|y\|
\end{eqnarray*}
as $\|\Impart(\theta'(x+ity)(iy))\|\leq\|\theta'(x+ity)(y)\|
\leq\|\theta'(x+ity)\|_{\op}\|y\|\leq\|\theta\|_{BC^1}\|y\|$.
\end{numba}
\begin{numba}\label{thepq}
If $U\sub\C^m$ is an open set, let
$\Hol_b^\partial(V_\ve,U)_\R$ be the set of all
$\theta\in\Hol_b(V_\ve,\C^m)_\R$
with closure $\overline{\theta(V_\ve)}\sub U$.
For $\ve>0$, we let~$Q_\ve$
be the set of all functions $\gamma\in\cL^1([0,1],BC^1_\C(W_{2\ve},\C^m)_\R)$
such that
\begin{equation}\label{defQeps}
\|\gamma\|_{\cL^1,\|\cdot\|_{BC^1}}<1/2.
\end{equation}
Then $P_\ve:=\{[\gamma]\colon \gamma\in Q_\ve\}$
is an open ball in
$L^1([0,1],BC^1_\C(W_{2\ve},\C^m)_\R)$.
\end{numba}
\begin{numba}
Let $\overline{W_\ve}\sub\C^m$ be the closure
of~$W_\ve$. Then
\[
\Hol_b(V_\ve,\wb{W_\ve})_\R:=\{\theta\in\Hol_b(V_\ve,\C^m)_\R\colon
\theta(V_\ve)\sub\wb{W_\ve}\}
\]
is a closed subset of $\Hol_b(V_\ve,\C^m)_\R$.
As a consequence,
$C([0,1],\Hol_b(V_\ve,\wb{W_\ve})_\R)$
is a closed subset of $C([0,1],\Hol_b(V_\ve,\C^m)_\R)$.
\end{numba}
\begin{la}\label{local-lem}
Let notation be as before, and $\ve>0$.
\begin{itemize}
\item[\rm(a)]
For each $\gamma\in Q_\ve$,
there exists a unique
$\zeta\in C([0,1],\Hol_b(V_\ve,\wb{W_\ve})_\R)$
which solves the integral equation
\begin{equation}
\zeta(t)=\id_{V_\ve}+\int_0^t \gamma(s)\circ \zeta(s)\,ds
\end{equation}
in $\Hol_b(V_\ve,\C^m)_\R$ for all $t\in [0,1]$.
We abbreviate $\Psi^{[\gamma]}_\ve:=\zeta$.
\item[\rm(b)]
The map
\[
P_\ve\to C([0,1],\Hol_b(V_\ve,\C^m)_\R),\quad
[\gamma]\mto \Psi^{[\gamma]}_\ve
\]
is Lipschitz continuous; we have
\[
\|\Psi^{[\gamma_2]}_\ve-\Psi^{[\gamma_1]}_\ve\|_\infty
\, \leq\,  2\,\|[\gamma_2]-[\gamma_1]\|_{L^1,\|\cdot\|_\infty}
\;\;\mbox{for all $\, [\gamma_1],[\gamma_2]\in P_\ve$.}
\]
\item[\rm(c)]
For all $\gamma\in Q_\ve$,
we have $\Psi^{[\gamma]}_\ve\in \AC([0,1],\Hol_b(V_\ve,\C^m)_\R)$.
\item[\rm(d)]
For all $\gamma\in Q_\ve$,
the differential equation $y'(t)=\gamma(t)(y(t))$
on $W_{2\ve}$ satisfies local existence and local uniqueness
of Carath\'{e}ordory solutions.
For all $y_0\in V_\ve$,
the maximal Carath\'{e}odory solution $\eta_{0,y_0}$ to
the initial value problem $y'(t)=\gamma(t)(y(t))$, $y(0)=y_0$
on $W_{2\ve}$ is defined on all of $[0,1]$,
and takes its values in~$W_\ve$.
Moreover, $\eta_{0,y_0}(t)=\Psi^{[\gamma]}_\ve(t)(y_0)$
for all $t\in [0,1]$.
\item[\rm(e)]
For real numbers $\delta \in\; ]0,\ve]$,
let $\rho\colon BC^1_\C(W_{2\ve},\C^m)_\R\to BC^1_\C(W_{2\delta},\C^m)_\R$,
$\theta\mto\theta|_{W_{2\delta}}$ be the restriction map.
Then $\rho\circ\gamma\in Q_\delta$
for each $\gamma\in Q_\ve$,
and $\Psi^{[\rho\circ \gamma]}_\delta(t)=\Psi^{[\gamma]}_\ve(t)|_{V_\delta}$
for each $t\in [0,1]$.
\end{itemize}
\end{la}
\begin{proof}
(a), (b), and (c): The composition map
\[
\comp\colon BC^1_\C(W_{2\ve},\C^m)_\R\times
\Hol_b^\partial(V_\ve,W_{2\ve})_\R\to \Hol_b(V_\ve,\C^m)_\R,
\quad
(\theta,\zeta)\mto \theta\circ\zeta
\]
is continuous, as it is a restriction of the composition map
\[
BC^1(W_{2\ve},\C^m)\times BC^{0,\partial}(V_\ve,W_{2\ve})\to BC^0(V_\ve,\C^m)
\]
in \cite[Lemma~3.3.8]{Wal},
which is continuous.
Moreover, $\comp$ is linear in its first argument.
As a consequence,
we have
\[
\comp\circ\, (\gamma,\eta)\in\cL^1([0,1],\Hol_b(V_\ve,\C^m)_\R)
\]
for all $\gamma\in \cL^1([0,1],BC^1_\C(W_{2\ve},\C^m)_\R)$
and $\eta\in C([0,1],\Hol_b^\partial(V_\ve,W_{2\ve})_\R)$,
by \cite[Lemma~4.1.23]{Nik}.
We therefore obtain a map $\comp_\star\colon$
\[
L^1([0,1],BC^1_\C(W_{2\ve},\C^m)_\R)
\times C([0,1],\Hol_b^\partial(V_\ve,W_{2\ve})_\R)
\!\to\!
L^1([0,1],\Hol_b(V_\ve,\C^m)_\R)
\]
via $([\gamma],\eta)\mto [\comp\circ \,(\gamma,\eta)]$.
The operator
\[
I\colon L^1([0,1],\Hol_b(V_\ve,\C^m)_\R)\to
C([0,1],\Hol_b(V_\ve,\C^m)_\R)
\]
determined by
\[
I([\gamma])(t):=
\int_0^t\gamma(s)\,ds
\]
is continuous and linear, with $\|I\|_{\op}\leq 1$.
Composing, we obtain a map\footnote{We write $\id_{V_\ve}$
also for the constant map $[0,1]\to \Hol_b(V_\ve,\C^m)_\R$,
$t\mto \id_{V_\ve}$.}
\[
g\colon P_\ve\times C([0,1],\Hol_b^\partial(V_\ve,W_{2\ve}))_\R)\to
C([0,1],\Hol_b(V_\ve,\C^m)_\R),
\]
\[
([\gamma],\eta)\mto \id_{V_\ve}+I(\comp_\star([\gamma],\eta)),
\]
and define $f$ as its restriction
to the smaller domain $P_\ve\times C([0,1],\Hol_b(V_\ve,\wb{W_\ve})_\R)$.
By construction,
\[
f([\gamma],\eta)(t)=\id_{V_\ve}+\int_0^t
\gamma(s)\circ\eta(s)\, ds
\]
in $\Hol_b(V_\ve,\C^m)_\R$ for each $t\in [0,1]$.
Then $f$ has image in $C([0,1],\Hol_b(V_\ve,\wb{W_\ve})_\R)$
actually. In fact, given $\gamma\in Q_\ve$ and $\eta\in
C([0,1],\Hol_b(V_\ve,\overline{W_\ve})_\R)$, let $t\in[0,1]$ and
$z=(z_1,\ldots,z_n)\in V_\ve$ be arbitrary.
Then
\[
\Repart(f([\gamma],\eta)(t)(z))=
\Repart(z)+\int_0^t \Repart(\gamma(s)(\eta(s)(z)))\,ds
\]
implies
$\|\Repart(f([\gamma],\eta)(t)(z))\|
\leq\|\Repart(z)\|+\int_0^t\|\gamma(s)\|_\infty\,ds
\leq \|\Repart(z)\|+\|\gamma\|_{\cL^1,\|\cdot\|_{BC^1}}$\linebreak
$<1+\ve/2+1/2<2+\ve$.
Using~\ref{shortcut} and (\ref{price}), we get
\begin{eqnarray*}
\|\Impart(f([\gamma],\eta)(t)(z))\|
&\leq & \|\Impart(z)\|+\int_0^t\|\Impart(\gamma(s)(\eta(s)(z)))\|\,ds\\
&\leq & \|\Impart(z)\|+\int_0^t \|\gamma(s)\|_{BC^1}\|\Impart(\eta(s)(z))\|\,ds\\
&< & \ve/2 + \ve \,\|\gamma\|_{\cL^1,\|\cdot\|_{BC^1}} <\ve.
\end{eqnarray*}
Thus, as required,
\begin{equation}\label{in-ope}
f([\gamma],\eta)(t)(z)\in W_\ve\sub\wb{W_\ve}.
\end{equation}
By the preceding, $f([\gamma],\cdot)$ is a self-map of
the closed subset
\[
C([0,1],\Hol_b(V_\ve,\wb{W_\ve}))_\R)
\]
of the Banach space $C([0,1],\Hol_b(V_\ve,\C^m)_\R)$.
We show that $f([\gamma],\cdot)$
is a contraction with Lipschitz constant $\leq 1/2$.
In fact, for $\eta_1,\eta_2\in C([0,1],\Hol_b(V_\ve,\wb{W_\ve})_\R)$,
we have for all $t\in[0,1]$ and $z\in V_\ve$ that
\begin{eqnarray*}
\|f([\gamma],\eta_2)(t)(z)-f([\gamma],\eta_1)(t)(z)\|
&\leq &\!\!\int_0^t \|\gamma(s)(\eta_2(s)(z))-\gamma(s)(\eta_1(s)(z))\|\,ds\\
&\leq & \!\! \int_0^t\|\gamma(s)\|_{BC^1}\|\eta_2(s)(z)-\eta_1(s)(z)\|\,ds\\
&\leq & \|\gamma\|_{\cL^1,\|\cdot\|_{BC^1}}\|\eta_2-\eta_1\|_\infty\leq
1/2\, \|\eta_2-\eta_1\|_\infty,
\end{eqnarray*}
using the Lipschitz constant from~\ref{hence-lip}
and the estimate $\|\eta_2(s)(z)-\eta_1(s)(z)\|\leq\|\eta_2(s)-\eta_1(s)\|_\infty
\leq\|\eta_2-\eta_1\|_\infty$. Passing to the suprema
in $z$ and in~$t$, we deduce that
$\|f([\gamma],\eta_2)-f([\gamma],\eta_1)\|_\infty\leq 1/2\,
\|\eta_2-\eta_1\|_\infty$, as asserted.\\[2.3mm]
For all $\gamma_1,\gamma_2\in Q_\ve$, note that
\begin{eqnarray*}
\lefteqn{\|f([\gamma_2],\eta)(t)(z)-f([\gamma_1],\eta)(t)(z)\|}\qquad\qquad\\
&\leq & \int_0^t\|\gamma_2(s)(\eta(s)(z))-\gamma_1(s)(\eta(s)(z))\|\,ds\\
&\leq& \int_0^t\|\gamma_2(s)-\gamma_1(s)\|_\infty\,ds
\leq  \|[\gamma_2]-[\gamma_1]\|_{L^1,\|\cdot\|_\infty}
\end{eqnarray*}
for all $\eta\in C([0,1],\Hol_b(V_\ve,\wb{W_\ve})_\R)$,
$t\in [0,1]$ and $z\in V_\ve$.
Passing to the suprema in~$z$ and $t$, we obtain
$\|f([\gamma_2],\eta)-f([\gamma_1],\eta)\|_\infty
\leq \|[\gamma_2]-[\gamma_1]\|_{L^1,\|\cdot\|_\infty}$,
showing that $f(\cdot,\eta)$ is a Lipschitz map from~$P_\ve$ with metric
\[
([\gamma_1],[\gamma_2])\mto \|[\gamma_2]-[\gamma_1]\|_{L^1,\|\cdot\|_\infty}
\]
to the Banach space $C([0,1],\Hol_b(V_\ve,\C^m)_\R)$,
with Lipschitz constant~$1$.
Now Lemma~\ref{par-lip} on Lipschitz continuous parameter dependence
of fixed points
shows that
$f([\gamma],\cdot)$
has a unique fixed point $\Psi^{[\gamma]}_\ve:=\zeta$
in
$C([0,1],\Hol_b(V_\ve,\wb{W_\ve})_\R)$,
which is Lipschitz continuous in $[\gamma]$
with
\[
\|\Psi^{[\gamma_2]}_\ve-\Psi^{[\gamma_1]}_\ve\|_\infty
\leq \frac{1}{1-1/2}\|[\gamma_2]-[\gamma_1]\|_{L^1,\|\cdot\|_\infty}
\]
for all $[\gamma_1],[\gamma_2]\in P_\ve$.
Since $I$ takes values in absolutely continuous functions,
we have $\zeta=f([\gamma],\zeta)\in
\AC([0,1],\Hol_b(V_\ve,\C^m)_\R)$.\\[2.3mm]
(d) By~\ref{hence-lip}, the function $[0,1]\times W_{2\ve}\to\C^m$,
$(t,y)\mto \gamma(t)(y)$ satisfies an $L^1$-Lipschitz condition
in the sense of~\cite[Definition~10.1]{MeR}
(cf.\ also \cite[Definition~22.36]{Sch}).
Hence $y'(t)=\gamma(t)(y(t))$ satisfies
local uniqueness of Carath\'{e}odory solutions (see \cite[Proposition~10.5]{MeR})
and local existence (see \cite[30.9]{Sch}).\\[2.3mm]
Let $y_0\in V_\ve$.
Since $\zeta:=\Psi^{[\gamma]}_\ve\in \AC([0,1],\Hol_b(V_\ve,\C^m)_\R)$
and the evaluation map $\ev_{y_0}\colon \Hol_b(V_\ve,\C^m)_\R\to\C^m$,
$\theta\mto \theta(y_0)$ is continuous and real linear,
we have $h:=\ev_{y_0}\circ \,\Psi^{[\gamma]}_\ve\in \AC([0,1],\C^m)$. Moreover,
applying $\ev_{y_0}$ to the integral equation at time $t\in [0,1]$, we get
\[
h(t)=\ev_{y_0}(\zeta(t))=y_0+\int_0^t\ev_{y_0}(\gamma(s)\circ\zeta(s))\,ds
=y_0+\int_0^t\gamma(s)(h(s))\,ds,
\]
showing that $h$ is a Carath\'{e}odory solution to the initial value problem
$y'(t)=\gamma(t)(y(t))$, $y(0)=y_0$. Thus $\eta_{0,y_0}=h\in \AC([0,1],\C^m)$.
Moreover, $\eta_{0,y_0}(t)=\zeta(t)(y_0)=f([\gamma],\zeta)(t)(y_0)
\in W_\ve$, by~(\ref{in-ope}).\\[2.3mm]
(e) Given $\gamma\in Q_\ve$,
we have $\rho\circ \gamma=\cL^1([0,1],\rho)(\gamma)
\in \cL^1([0,1],BC^1_\C(W_{2\delta},\C^m)_\R)$.
Since $\|(\rho\circ\gamma)(t)\|_{BC^1}=\|\gamma(t)|_{W_{2\delta}}\|_{BC^1}
\leq \|\gamma(t)\|_{BC^1}$ for all $t\in[0,1]$, we have
\[
\|\rho\circ\gamma\|_{\cL^1, \|\cdot\|_{BC^1}}\leq
\|\gamma\|_{\cL^1,\|\cdot\|_{BC^1}}<1/2\vspace{-.9mm}
\]
and thus $\rho\circ \gamma\in Q_\delta$.
Given $y_0\in V_\delta$,
both of the maps $t\mto \Psi^{[\rho\hspace*{.2mm}\circ\gamma]}_\delta(t)(y_0)$
and $t\mto \Psi^{[\gamma]}_\ve(t)(y_0)$
are Carath\'{e}odory solutions to $y'(t)=\gamma(t)(y(t))$, $y(0)=y_0$
in $W_{2\ve}$ and hence coincide (by local uniqueness in~(d)
and \cite[Lemma~3.2]{GaH}).
Thus $\Psi^{[\rho\hspace*{.2mm} \circ\gamma]}_\delta(t)(y_0)=\Psi^{[\gamma]}_\ve(t)(y_0)$
for all $t\in [0,1]$ and $y_0\in V_\delta$.
\end{proof}
Let $B^{\C^m}_\delta(0)$ be the ball around~$0$ in $\C^m$
of radius $\delta>0$, for the maximum norm.
\begin{la}\label{final-loc}
Let $r>0$ and $\alpha\colon \Omega\to W_r$
be a complex analytic function
on an open subset $\Omega\sub TW_r=W_r\times\C^m$
with $W_r\times \{0\}\sub\Omega$ such that
$\alpha(z,0)=z$ for all $z\in W_r$
and
\[
\theta\colon \Omega\to W_r\times W_r,\quad
(z,w)\mto (z,\alpha(z,w))
\]
has open image and is a complex analytic
diffeomorphism onto the image.
Moreover, assume that $\alpha(z,\cdot)'(0)=\id_{\C^m}$
for each $z\in W_r$. Let $\ve\in\,]0,r[$.
Then there exists $\delta_0>0$
such that $W_\ve\times B_{\delta_0}^{\C^m}(0)\sub \Omega$
and, for each open subset $U\sub W_\ve$
and $\delta\in\,]0,\delta_0]$, the open set
$\theta(U\times B_\delta^{\C^m}\!(0))$ contains
the open subset
\[
\bigcup_{z\in U}(\{z\}\times B_{\delta/2}^{\C^m}(z))
\;\;\, \mbox{of $\;W_r\times W_r$.}\vspace{-.9mm}
\]
Thus $\theta^{-1}(z,w)\in
U\times B_\delta^{\C^m}\!(0)$
for all $(z,w)\in U\times\C^m$ such that~$\|w-z\|<\delta/2$.
Moreover, the map
\[
\pr_2\circ \, \theta^{-1}(z,\cdot)|_{B^{\C^m}_{\delta/2}(z)}
\colon B_{\delta/2}^{\C^m}(z)\to\C^m
\]
is Lipschitz continuous with Lipschitz constant~$2$,
for each $z\in U$ $($where we use the projection
$\pr_2\colon\C^m\times\C^m\to\C^m$
onto the second component$)$.
\end{la}
\begin{proof}
The map $h\colon \Omega\to[0,\infty[$,
$(z,w)\mto \|\alpha(z,\cdot)'(w)-\id_{\C^m}\|_{\op}$
is continuous and $h(z,0)=0$
for all $z\in \wb{W_\ve}$.
Thus $\wb{W_\ve}\times\{0\}$ is a compact subset of the open set
$h^{-1}([0,1/2[)$. By the Wallace Theorem~\cite{Key},
there is an open subset $A\sub W_r$ with $\wb{W}_\ve\sub A$
and an open $0$-neighbourhood $B\sub\C^m$
such that $A\times B\sub h^{-1}([0,1/2[)$.
There exists $\delta_0>0$ such that
$B^{\C^m}_{\delta_0}(0)\sub B$.
Then
\[
W_\ve\times B^{\C^m}_{\delta_0}(0)\sub A\times B\sub h^{-1}(([0,1/2[).
\]
Define $\alpha_z\colon B^{\C^m}_{\delta_0}(0)\to\C^m$
for $z\in W_\ve$ via $\alpha_z(w):=\alpha(z,w)$.
Then $\alpha_z$ is complex analytic and $\|\alpha_z'(w)-\id_{\C^m}\|_{\op}\leq 1/2$
for all $w$ in the convex open set $Q:=B^{\C^m}_{\delta_0}(0)$,
whence $\alpha_z-\id_Q$ is Lipschitz continuous with Lipschitz constant~$1/2$.
By the Quantitative Inverse Function Theorem (see \cite[Theorem~2.3.32]{GaN},
also \cite[Lemma 6.1\,(a)]{IMP}), this entails that
\[
\alpha_z(B_\delta^{\C^m}\!(0))\supseteq B^{\C^m}_{\delta/2}(\alpha_z(0))=
B^{\C^m}_{\delta/2}(z)
\quad\mbox{for all $\, \delta\in \,]0,\delta_0]$;}
\]
moreover, $\pr_2(\theta(z,\cdot)^{-1})|_{B^{\C^m}_{\delta/2}(z)}
=\alpha_z^{-1}|_{B^{\C^m}_{\delta/2}(z)}$
is Lipschitz continuous with Lip\-schitz constant~$2$.
If $U\sub W_\ve$ is an open set and $\delta\in\;]0,\delta_0]$,
then $\alpha(U\times B^{\C^m}_\delta\!(0))$ is open in $W_r\times \C^m$
since $\alpha$ is a homeomorphism onto its open image.
Moreover,\linebreak
$\alpha(U\times B^{\C^m}_\delta\!(0))=\bigcup_{z\in U}\alpha_z(B^{\C^m}_\delta\!(0))
\supseteq \bigcup_{z\in U}B^{\C^m}_{\delta/2}(z)$
holds and the union on the right-hand side is open, as a consequence
of the triangle inequality.
\end{proof}
\begin{la}\label{gives-compact}
The restriction mapping
$\rho\colon \Hol_b(V_\ve,\C^m)_\R \to  \Hol_b(V_\delta,\C^m)_\R$,\linebreak
$\theta\mto\theta|_{V_\delta}$
is a compact operator, for all real numbers $0<\delta<\ve$.
\end{la}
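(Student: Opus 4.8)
The plan is to show that the restriction operator $\rho$ maps the closed unit ball $B$ of $\Hol_b(V_\ve,\C^m)_\R$ to a relatively compact subset of the Banach space $\Hol_b(V_\delta,\C^m)_\R$; as the latter is complete, this is exactly compactness of $\rho$. First I would record the geometric fact underlying everything: comparing the explicit product descriptions of $V_\delta$ and $V_\ve$ and using $\delta<\ve$, the closure $\wb{V_\delta}$ is a \emph{compact} subset of the open set $V_\ve$ (with a positive gap to the boundary). Every $\theta\in B$ satisfies $\|\theta\|_\infty\le 1$ on $V_\ve$, so $B$ is a uniformly bounded family of $\C^m$-valued holomorphic maps on $V_\ve$.

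The heart of the argument is a normal-families estimate. Choose an intermediate radius $\eta$ with $\delta<\eta<\ve$, so that $\wb{V_\delta}\sub V_\eta$ and $\wb{V_\eta}\sub V_\ve$ with a fixed positive gap. Applying Cauchy's integral formula componentwise on small polydiscs contained in $V_\ve$, the uniform bound $\|\theta\|_\infty\le 1$ yields a bound on the first derivatives $\theta'(z)$ that is uniform for $z\in\wb{V_\delta}$ and independent of $\theta\in B$. Hence the restrictions $\{\theta|_{\wb{V_\delta}}\colon\theta\in B\}$ are uniformly bounded and uniformly Lipschitz, so by the Arzel\`a--Ascoli theorem every sequence in this family has a subsequence converging uniformly on $\wb{V_\delta}$. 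Equivalently, one may invoke Montel's theorem directly: the uniform bound makes $B$ a normal family on $V_\ve$, producing a subsequence converging locally uniformly on $V_\ve$ and hence uniformly on the compact set $\wb{V_\delta}$.

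It remains to identify the limit $\theta$ of such a convergent subsequence $\theta_k|_{V_\delta}$. Uniform convergence on $\wb{V_\delta}$ is in particular local uniform convergence on $V_\delta$, so by the Weierstrass convergence theorem $\theta$ is holomorphic on $V_\delta$; it satisfies $\|\theta\|_\infty\le 1$ as a uniform limit of functions bounded by $1$; and the real-structure condition $\theta(V_\delta\cap\R^m)\sub\R^m$ passes to the limit, being a closed condition for the sup norm (the imaginary part at each real point depends continuously on $\theta$). Thus $\theta\in\Hol_b(V_\delta,\C^m)_\R$ and $\theta_k|_{V_\delta}\to\theta$ in sup norm, so $\rho(B)$ is relatively compact and $\rho$ is compact. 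I expect the only points needing care to be the uniform derivative estimate via Cauchy's formula (which is where the intermediate domain $V_\eta$ and the positive gap to $\partial V_\ve$ enter) and the verification that each defining condition of $\Hol_b(V_\delta,\C^m)_\R$ (holomorphy, boundedness, and reality on $\R^m$) is preserved under uniform limits; both are routine once the inclusion $\wb{V_\delta}\sub V_\ve$ is in hand.
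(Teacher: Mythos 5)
Your proof is correct and rests on exactly the same two pillars as the paper's: Cauchy estimates on a compact set $\wb{V_\delta}\subset V_\ve$ to get a uniform derivative bound (hence equicontinuity) on the unit ball, followed by Arzel\`a--Ascoli/Montel and the observation that holomorphy, the bound, and the reality condition on $\R^m$ survive uniform limits. The only difference is presentational: the paper factors $\rho$ as a composition of continuous operators through $BC^1_\C(V,\C^m)$ and $\Hol(V,\C^m)$ with the compact-open topology and isolates the compactness in the middle inclusion, whereas you run the sequential compactness argument directly on $\wb{V_\delta}$; no gap either way.
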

\begin{proof}
If $U\sub\C^m$ is open and $W\sub U$ is a relatively compact,
open subset, then the restriction map
\[
\rho_{W,U}\colon \Hol_b(U,\C^m)\to\Hol_b(W,\C^m),\quad
\theta\mto \theta|_W
\]
is a compact operator. To see this,
let $V\sub U$ be a relatively compact, open subset such that $\wb{W}\sub V$.
The restriction map $r\colon \Hol_b(U,\C^m)\to BC^1_\C(V,\C^m)$
is continuous as the restriction
map $C^1(U,\C^m)\to BC^1(V,\C^m)$
is continuous for the compact-open $C^1$-topology.
We now consider the inclusion map $j\colon BC^1_\C(V,\C^m)\to \Hol(V,\C^m)$,
which is continuous linear.
For the unit ball~$B$ in $BC^1_\C(V,\C^m)$,
the image $j(B)$ is equicontinuous
and pointwise bounded, whence $j(B)$
is relatively compact in $C(V,\C^m)$ with the compact-open
topology by\linebreak
Ascoli's Theorem,
and hence also in the closed vector subspace $\Hol(V,\C^m)$.
The restriction map $R\colon \Hol(V,\C^m)\to \Hol_b(W,\C^m)$
is continuous linear. Hence $R(j(B))$ is relatively
compact in $\Hol_b(W,\C^m)$,
whence $R\circ j$ is a compact operator. As a consequence, also
$\rho_{W,U}=R\circ j\circ r$ is a compact operator.\\[2.3mm]
Let $\lambda\colon \Hol_b(V_\ve,\C^m)_\R\to\Hol_b(V_\ve,\C^m)$
be the inclusion map; it is continuous and linear.
By the preceding, $\rho_{V_\delta,V_\ve}\circ\lambda$
is a compact operator. As this map takes its values
in the closed real vector subspace $\Hol_b(V_\delta,\C^m)_\R$
of $\Hol_b(V_\delta,\C^m)$, its corestriction~$\rho$
to a map into this vector subspace is
compact as well.
\end{proof}
If $X$ is a topological space and $(E,\|\cdot\|)$
a normed space, we write $BC(X,E)$ for the space of bounded, continuous
functions $\gamma\colon X\to E$, endowed with the supremum norm
$\|\cdot\|_\infty$. We let $\graph(\gamma)\sub X\times E$
be the graph of~$\gamma$.
\begin{la}\label{fstar-lip}
Let $X$ be a topological space, $(E,\|\cdot\|_E)$
and $(F,\|\cdot\|_F)$ be normed spaces,
$U\sub X\times E$ be a subset and
$f\colon U\to F$ be a bounded, continuous function.
For $x\in X$, write $U_x:=\{y\in E\colon (x,y)\in U\}$;
assume that there exists $L\in [0,\infty[$
such that, for each $x\in X$, the map
\[
f_x:=f(x,\cdot)\colon U_x\to F
\]
is Lipschitz continuous on $U_x\sub E$ with Lipschitz constant~$L$.
Define $D_{f_*}:=\{\gamma\in BC(X,E)\colon\graph(\gamma)\sub U\}$.
For each $\gamma\in D_{f_*}$,
\[
f_*(\gamma)(x):=f(x,\gamma(x))\;\;\mbox{for $\,x\in X$}
\]
then defines a function $f_*(\gamma)\in BC(X,F)$. The map
\[
f_*\colon D_{f_*}\to BC(X,F),\;\;\gamma\mto f_*(\gamma)
\]
is Lipschitz continuous on $D_{f_*}\sub BC(X,E)$,
with Lipschitz constant~$L$.
\end{la}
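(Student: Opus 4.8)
The plan is to verify two things in turn: first that $f_*$ indeed maps $D_{f_*}$ into $BC(X,F)$, and second that it satisfies the asserted Lipschitz estimate with constant~$L$. For the first point, I would fix $\gamma\in D_{f_*}$ and note that, since $f$ is bounded, say $\|f(x,y)\|_F\leq M$ for all $(x,y)\in U$ and some $M\in[0,\infty[$, the estimate $\|f_*(\gamma)(x)\|_F=\|f(x,\gamma(x))\|_F\leq M$ holds for all $x\in X$, because $(x,\gamma(x))\in\graph(\gamma)\sub U$. Thus $f_*(\gamma)$ is bounded. For continuity, I would observe that $x\mto(x,\gamma(x))$ is a continuous map from~$X$ into $X\times E$ whose image lies in~$U$, whence $f_*(\gamma)=f\circ(\id_X,\gamma)$ is continuous as a composition of continuous maps. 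Hence $f_*(\gamma)\in BC(X,F)$, so $f_*$ is well defined.

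For the Lipschitz estimate, I would fix $\gamma_1,\gamma_2\in D_{f_*}$ and argue pointwise. For each $x\in X$, both $\gamma_1(x)$ and $\gamma_2(x)$ lie in the fibre~$U_x$, since the graphs of $\gamma_1$ and $\gamma_2$ are contained in~$U$. Applying the hypothesis that $f_x=f(x,\cdot)$ is Lipschitz on $U_x$ with constant~$L$, I obtain
\[
\|f_*(\gamma_2)(x)-f_*(\gamma_1)(x)\|_F
=\|f_x(\gamma_2(x))-f_x(\gamma_1(x))\|_F
\leq L\,\|\gamma_2(x)-\gamma_1(x)\|_E
\leq L\,\|\gamma_2-\gamma_1\|_\infty .
\]
Since the right-hand side is independent of~$x$, passing to the supremum over $x\in X$ yields $\|f_*(\gamma_2)-f_*(\gamma_1)\|_\infty\leq L\,\|\gamma_2-\gamma_1\|_\infty$, which is exactly Lipschitz continuity of~$f_*$ with constant~$L$ (in the sense appropriate to the normed spaces $BC(X,E)$ and $BC(X,F)$).

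There is no serious obstacle here: the argument is a routine pointwise application of the hypotheses followed by passage to suprema. The only point requiring care is to keep track that $\gamma_i(x)\in U_x$ for $i\in\{1,2\}$ whenever the uniform Lipschitz bound on the fibre maps~$f_x$ is invoked; this is guaranteed precisely by the condition $\graph(\gamma_i)\sub U$ built into the definition of~$D_{f_*}$. The \emph{uniformity} of the constant~$L$ across all fibres~$x$ is exactly what permits the individual pointwise estimates to be combined into a single supremum bound with the same constant~$L$; a merely fibrewise (non-uniform) Lipschitz hypothesis would not suffice.
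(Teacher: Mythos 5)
Your proof is correct and follows essentially the same route as the paper: continuity and boundedness of $f_*(\gamma)$ via the composition $f\circ(\id_X,\gamma)$ and the boundedness of $f$, then the pointwise fibrewise Lipschitz estimate followed by passage to the supremum in $x$. No gaps.
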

\begin{proof}
For each $\gamma\in D_{f_*}$,
the map $(\id_X,\gamma)$ is continuous,
whence also its co-restriction $(\id_X,\gamma)|^U$
and $f_*(\gamma)=f\circ (\id_X,\gamma)|^U$ is continuous.
Since $f$ is bounded, also $f_*(\gamma)$ is bounded.
If $\gamma_1,\gamma_2\in D_{f_*}$, we estimate for $x\in X$
\begin{eqnarray*}
\|f_*(\gamma_2)(x)-f_*(\gamma_1)(x)\|_F
&=&\|f(x,\gamma_2(x))-f(x,\gamma_1(x))\|_F\\
&\leq & L\,\|\gamma_2(x)-\gamma_1(x)\|_E\leq L\,\|\gamma_2-\gamma_1\|_\infty.
\end{eqnarray*}
Passing to the supemum in~$x$, we get
$\|f_*(\gamma_2)-f_*(\gamma_1)\|_\infty\leq L\, \|\gamma_2-\gamma_1\|_\infty$.
\end{proof}
\section{Preparations concerning {\boldmath$\Gamma^\omega(TM)$}}\label{sec-gens}
We discuss generalities concerning the vector space
$\Gamma^\omega(TM)$ of real-analytic vector fields
on a compact real-analytic manifold~$M$.
\begin{numba}
Let~$M$ be a compact real-analytic manifold.
Since~$M$ can be embedded in~$\R^N$ for some~$N$
(see \cite{Gra}),
there exists a real-analytic Riemannian metric~$g$
on~$M$. The associated Riemannian exponential map
\[
\exp_g\colon TM \to M
\]
is defined on all of $TM$ as $M$ is compact;
moreover, $\exp_g$ is real analytic.
There exists an open neighbourhood $\Omega\sub TM$
of the zero-section such that
\[
\alpha:=\exp_g|_\Omega\colon\Omega\to M
\]
is a real-analytic local addition in the sense that $\alpha(0_x)=x$
for all $x\in M$ and
\[
(\pi_{TM},\alpha) \colon \Omega\to M\times M
\]
has open image and is a real-analytic diffeomorphism onto its image,
where $\pi_{TM}\colon TM\to M$ is the bundle projection.\footnote{This is well known.
As $\exp_g(0_x)=x$ for $0_x\in T_xM$
and $T_0(\exp_g|_{T_xM})=\id_{T_xM}$
for all $x\in M$, it follows, e.g., from
the inverse function theorem
for $C^\omega$-maps
and \cite[Lemma~4.6]{DaG}.}
We let~$M^*$ be a complexification of~$M$ such that $M\sub M^*$
(see \cite{WaB} or \cite{Gra}).
After shrinking~$M^*$ if necessary we may assume that
\[
M=\{z\in M^*\colon \sigma(z)=z\}
\]
for an antiholomorphic map $\sigma\colon M^*\to M^*$
which is an involution (i.e., $\sigma\circ\sigma=\id_{M^*}$),
see \cite{WaB};
moreover, we may assume that~$M^*$
is $\sigma$-compact and hence metrizable.
Note that $T(M^*)$ is a complexification of~$TM$.
Hence~$\alpha$ extends to a complex-analytic map
\[
\alpha^*\colon \Omega^*\to M^*
\]
on some open subset $\Omega^*\sub T(M^*)$ with $\Omega\sub\Omega^*$
(see, e.g., \cite[Lemma~2.2\,(a)]{DGS}).
After shrinking~$\Omega^*$ and~$\Omega$, we may assume
that the map
\[
\theta:=(\pi_{T(M^*)},\alpha^*)\colon\Omega^*\to M^*\times M^*
\]
has open image and is a local $C^\infty_\C$-diffeomorphism
(exploiting the inverse function theorem),
and in fact a $C^\infty_\C$-diffeomorphism
onto its open image (using \cite[Lemma~4.6]{DaG}).
After replacing $\Omega^*$ with $\Omega^*\cap T\sigma(\Omega^*)$,
we may assume that $T\sigma(\Omega^*)=\Omega^*$.
After replacing~$\Omega^*$ with the union of its
connected components~$C$ which intersect~$TM$
non-trivially, we may assume that each~$C$ does so.
Since
$\sigma\circ \alpha^*\circ T\sigma|_{\Omega^*}$ and~$\alpha^*$
are complex analytic maps which coincide on $\Omega^*\cap TM$,
using the Identity Theorem we deduce that
\[
\sigma\circ\alpha^*\circ T\sigma|_{\Omega^*}=\alpha^*.
\]
\end{numba}
\begin{numba}
Let $\cU$ be the set of all
open neighbourhoods~$U$ of~$M$ in~$M^*$
such that $U=\sigma(U)$ and each connected component
of~$U$ meets~$M$.
We endow the complex vector space
$\Gamma^\infty_\C(TU)$
of complex-analytic vector fields on~$U$
with the compact-open topology, which turns it into a Fr\'{e}chet space
and coincides with the compact-open $C^\infty$-topology
(see \ref{vftopo}).
We endow the space
\[
\Germ(M,T(M^*))=\dl\, \Gamma^\infty_\C(TU)\vspace{-1.1mm}
\]
of germs of complex-analytic vector fields on open neighbourhoods $U\in \cU$
of~$M$ in~$M^*$
with the locally convex direct limit topology.
Each real-analytic vector field~$X$ on $M$ extends to
a complex-analytic vector field~$X^*\in \Gamma^\infty_\C(TU)$
for some~$U\in\cU$,
whose germ~$[X^*]$ around~$M$ is uniquely determined by~$X$.
In this way, we obtain an injective real linear map
\[
\Gamma^\omega(TM)  \to\Germ(M,T(M^*)),\quad X\mto [X^*];
\]
we endow the space $\Gamma^\omega(TM)$ of real-analytic vector fields
on~$M$ with the initial topology with respect to this map.
Whenever convenient, we shall identify $X\in \Gamma^\omega(TM)$
with~$[X^*]$. Thus, we consider $\Gamma^\omega(TM)$
as a vector subspace of $\Germ(M,T(M^*))$, endowed with the induced
topology.
Using this identification, we have that
\begin{equation}\label{nxtiscx}
\Gamma^\omega(TM)  =\{[X]\in \Germ(M,T(M^*))\colon [(T\sigma)\circ X\circ\sigma]=[X]\}.
\end{equation}
The mappings $\Gamma^\infty_\C(TU)\to \Gamma^\infty_\C(TU)$,
$X\mto T\sigma\circ X\circ \sigma$
are antilinear involutions for each open set $U\in\cU$.
Via the universal property of the locally convex direct limit,
they induce a continuous antilinear map
\[
\Germ(M,T(M^*))\to \Germ(M,T(M^*)),\quad
[X]\mto [T\sigma\circ X\circ \sigma]
\]
which is an involution. We now deduce from (\ref{nxtiscx}) that
\[
\Germ(M,T(M^*))=\Gamma^\omega(TM)_\C
\]
and, for $U$ in the directed set~$\cU$,
\[
\Gamma^\omega(TM)=\dl\;\Gamma^\infty_\C(TU)_\R\quad\mbox{with}
\]
$\Gamma^\infty_\C(TU)_\R:=
\{X\!\in\! \Gamma^\infty_\C(TU)\colon T\sigma\circ X\circ \sigma\!=X\}
\!=\!\{X\!\in\! \Gamma^\infty_\C(TU)\colon \! X(M)\!\sub \!TM\}$.
\end{numba}
\begin{numba}\label{def-funda}
We call a sequence $U_1\supseteq U_2\supseteq\cdots$ in~$\cU$
a \emph{fundamental sequence}
if, for each $U\in\cU$,
there exists $n\in\N$ such that $U_n \sub U$.
Then $\{U_n\colon n\in\N\}$ is cofinal in $(\cU,\supseteq)$
and thus
\begin{equation}\label{also-regu}
\Gamma^\omega(TM)=\dl\;\Gamma^\infty_\C(TU_n)_\R.
\end{equation}
\end{numba}
\begin{numba}\label{roter-faden}
In the following, we construct a fundamental sequence
$(U_n)_{n\in\N}$ (of the form $U_n=A_{\ve_n}$
with notation as below)
and real Banach spaces~$E_n$ such that
\[
\Gamma^\infty_\C(TU_1)_\R\;\sub \;E_1\;
\sub \; \Gamma^\infty_\C(TU_2)_\R \;\sub \;E_2\; \sub\; \cdots
\]
is a direct sequence of locally convex spaces
and the inclusion maps $E_n\to E_{n+1}$
are compact operators.
As the direct limit of the latter sequence coincides with that
of even and odd terms, respectively, we find that
\[
\Gamma^\omega(TM)=\dl\,\Gamma^\infty_\C(TU_n)_\R=\dl\,E_n,\vspace{-1.1mm}
\]
which implies the known fact that $\Gamma^\omega(TM)$
is a Silva space.
\end{numba}
\begin{numba}
The map $M^*\to T(M^*)$ taking $x\in M^*$ to
the zero-element $0_x\in T_x(M^*)$ is continuous,
whence
\[
O:=\{x\in M^*\colon 0_x\in \Omega^*\}
\]
is open in~$M^*$. Since $\Omega\sub\Omega^*$,
we have $M\sub O$.
\end{numba}
\begin{numba}
For each $x\in M$,
there exists a chart $\phi_x\colon Y_x\to Z_x\sub\C^m$
of $M^*$ with~$x\in Y_x$
such that $\phi_x(Y_x \cap M)=Z_x\cap \R^m$.
After shrinking~$Y_x$ and~$Z_x$,
we may assume that $Y_x\sub O$.
After replacing~$\phi_x$ with $\phi_x-\phi_x(x)$, we may assume that $\phi_x(x)=0$.
After replacing $Y_x$ with $Y_x\cap \sigma(Y_x)$,
we may assume that $Y_x=\sigma(Y_x)$; after passage
to the union of all connected components~$C$ of~$Y_x$ which intersect~$M$
non-trivially, we may assume that any~$C$
has this property.
Recall that
$\tau\colon\C^m\to\C^m$ is the complex conjugation.
As both of the complex-analytic functions
$\tau\circ\phi_x\circ \sigma|_{Y_x}$ and $\phi_x$
coincide on $M\cap Y_x$, we deduce that
\begin{equation}\label{good-phi}
\tau\circ\phi_x\circ\sigma|_{Y_x}=\phi_x.
\end{equation}
Notably, $Z_x=\tau(Z_x)$.
For some $a>0$, we have $[{-a},a]^m\sub Z_x$;
after replacing $\phi_x$ with $\frac{2}{a}\phi_x$,
we may assume that
\[
[{-2},2]^m\sub Z_x.
\]
After shrinking~$Z_x$ and~$Y_x$,
we may assume that $Z_x=W_{r_x}$ (as in~\ref{convents})
for some $r_x>0$.
By compactness of~$M$, there are $\ell\in\N$
and $x_1,\ldots,x_\ell\in M$ with
\[
M=\bigcup_{k=1}^\ell\phi_{x_k}^{-1}(]{-1},1[^m).
\]
We now simply write $\phi_k:=\phi_{x_k}$,
$Y_k:=Y_{x_k}$, and $Z_k:=Z_{x_k}$
for $k\in\{1,\ldots,\ell\}$.
Let $r:=\min\{r_1,\ldots, r_\ell\}$.
After shrinking~$Z_k$ and $Y_k$,
we may assume that
\[
Z_k=W_r\quad\mbox{for all $\, k\in\{1,\ldots,\ell\}$.}
\]
\end{numba}
\begin{numba}\label{new-num}
For all $k\in\{1,\ldots,\ell\}$,
we have $0_x\in \Omega^*$ for all $x\in Y_k$
(as $Y_k\sub O$) and
\begin{equation}\label{saves-a}
\alpha^*(0_x)=x\;\;\mbox{for all $\, x\in Y_k$.}
\end{equation}
To see this, note that the map
$W_r\to M^*$,\vspace{-.8mm} $y\mto \alpha^*(T\phi_k^{-1}(y,0))=\alpha^*(0_{\phi_k^{-1}(y)})$
is complex analytic;
we show that it coincides with the complex-analytic map~$\phi_k^{-1}$
on $W_r\cap\R^m$. The two maps then coincide
by the Identity Theorem, and the assertion follows.
Let $y\in W_r\cap\R^m$. Then $x:=\phi_k^{-1}(y)\in M$
and thus $\alpha^*(0_x)=\alpha(0_x)=x=\phi_k^{-1}(y)$ indeed.
\end{numba}
\begin{numba}\label{inverse-process}
Let $\Omega_k:=T\phi_k((\alpha^*)^{-1}(Y_k)\cap TY_k)
\sub TW_r=W_r \times\C^m$
and define
\[
\alpha_k\colon \Omega_k\to W_r,\quad (z,w)\mto \phi_k(\alpha^*(T(\phi_k^{-1})(z,w))).
\]
Then $W_r\times\{0\}\sub \Omega_k$,
as a consequence of~(\ref{saves-a}).
Note that
\[
\frac{d}{dt}\Big|_{t=0}\alpha(tv)=\frac{d}{dt}\Big|_{t=0}\exp_g(tv)=v
\]
for all $x\in M$ and $v\in T_xM$,
entailing that $\frac{d}{dt}\big|_{t=0}\alpha^*(tv)=v$
for all $x\in M$ and $v\in T_x(M^*)\cong (T_xM)_\C$.
As a consequence, the map $y\mto (\alpha_k)_x(y):=\alpha_k(x,y)$ has derivative
\[
(\alpha_k)_x'(0)=\id_{\C^{m}}
\]
for all $x\in W_r\cap\R^m$ and hence
for all $x\in W_r$, by the Identity Theorem.
Define
\[
\theta_k\colon \Omega_k\to W_r\times W_r,\quad
(z,w)\mto (z,\alpha_k(z,w)).
\]
Applying Lemma~\ref{final-loc}
to~$\alpha_k$ and~$\Omega_k$
in place of~$\alpha$ and~$\Omega$,
with $r/2$ in place of~$\ve$,
we find $\delta_k>0$
such that $W_{r/2}\times B_{\delta_k}^{\C^m}(0)\sub \Omega_k$
and, for each open subset $U\sub W_{r/2}$
and $\delta\in\;]0,\delta_k]$, the open set
$\theta_k(U\times B_\delta^{\C^m}\!(0))$ contains
the open subset
\[
\bigcup_{z\in U}(\{z\}\times B_{\delta/2}^{\C^m}(z))
\quad\mbox{of $\,W_r\times W_r$}\vspace{-1.1mm}
\]
and $\pr_2\circ \, \theta_k(z,\cdot)^{-1}|_{B^{\C^m}_{\delta/2}(z)}$
is Lipschitz with Lipschitz constant~$2$,
for each $z\in U$.
We set $\delta_0:=\min\{\delta_1,\ldots,\delta_\ell\}$.
Using notation as in \ref{convents},
for each $\delta\in\;]0,\delta_0]$, $\ve\in \;]0,r]$
and $k\in\{1,\ldots, \ell\}$ we now have
\[
\theta_k(V_\ve\times B_\delta^{\C^m}\!(0))
\; \supseteq\;
\bigcup_{z\in V_\ve}(\{z\}\times B_{\delta/2}^{\C^m}(z)).
\]
For $k\in\{1,\ldots, \ell\}$, we claim that
\[
\theta_k(\wb{z},\wb{w})=\wb{\theta_k(z,w)}\quad\mbox{for all $(z,w)\in V_r\times
B^{\C^m}_{\delta_0}(0)$;}
\]
if this is true, then
\begin{equation}\label{prereal}
\big(\forall (z,w)\in V_r\times B^{\C^m}_{\delta_0}(0)\big)\;\;
\theta_k(z,w)\in \R^m\times\R^m\;\Leftrightarrow
\;
(z,w)\in\R^m\times\R^m.
\end{equation}
Notably,
\begin{equation}\label{realdomim}
\pr_2(\theta_k^{-1}(x,z))\in\R^m\quad\mbox{for all $\,x\in V_r\cap\R^m$
and $\,z\in \R^m\cap B^{\C^m}_{\delta_0/2}(x)$,}
\end{equation}
which shall be useful later. To prove the claim,
let $x\in V_r\cap\R^m$. Then $T\phi_k^{-1}(x,0)=0_{\phi_k^{-1}(x)}\in\Omega$
holds.
By continuity of $\phi_k^{-1}$,\vspace{-.4mm} there exists an open\linebreak
neighbourhood
$A$ of $x$ in $V_r\cap\R^m$ and an open $0$-neighbourhood
$B$ in $\R^m\cap B_{\delta_0}^{\C^m}(0)$
such that $T\phi_k^{-1}(A\times B)\sub\Omega$.
Then $(z,w)\mto \theta_k(z,w)$ and
\[
(z,w)\mto\wb{\theta_k(\wb{z},\wb{w})}
\]
are complex-analytic functions $V_r\times B^{\C^m}_{\delta_0}(0)\to\C^m\times\C^m$
which coincide on $A\times B$ and hence on
$V_r\times B^{\C^m}_{\delta_0}(0)$, by the Identity Theorem.
\end{numba}
\begin{numba}
For $\ve\in\,]0,r/2]$ and $k\in\{1,\ldots,\ell\}$,
define $A_{k,\ve}:=\phi_k^{-1}(W_{2\ve})$,
$B_{k,\ve}:= \phi_k^{-1}(V_\ve)$,
$\phi_{k,\ve}:=\phi_k|_{A_{k,\ve}}\colon A_{k,\ve}\to W_{2\ve}$,
$\psi_{k,\ve}:=\phi_k|_{B_{k,\ve}}\colon B_{k,\ve}\to V_\ve$,
\[
A_\ve:=\bigcup_{k=1}^\ell A_{k,\ve},\quad\mbox{and}\quad
B_\ve:=\bigcup_{k=1}^\ell B_{k,\ve}.
\]
Thus $B_\ve\sub A_\ve$ for each $\ve$.
If $0<\ve'<\ve$, then $A_{\ve'}\sub A_\ve$ and $B_{\ve'}\sub B_\ve$.
We have $\sigma(A_{k,\ve})=\sigma(\phi_k^{-1}(W_{2\ve}))
=\phi_k^{-1}(\tau(W_{2\ve}))=\phi_k^{-1}(W_{2\ve})=A_{k,\ve}$.
Likewise, $\sigma(B_{k,\ve})=B_{k,\ve}$. As a consequence,
$A_\ve, B_\ve\in\cU$.
\end{numba}
\begin{numba}\label{hencefund}
For each $U\in\cU$, there exists
$\ve\in\;]0,r/2]$ such that $A_\ve\sub U$.
In fact, $Q_k:=\phi_k(U\cap Y_k)$ is an open subset
of $W_r$ which contains $[{-2},2]^m$.
The closure $\wb{W_{r/2}}$ is compact in $W_r$
and $(\wb{W_{r/2}}\setminus Q_k)\cap \bigcap_{\ve<r/2}\wb{W_{2\ve}}=\emptyset$
as $\bigcap_{\ve<r/2}\wb{W_{2\ve}}=[{-2},2]^m$.
Using the finite intersection property, we find
$\nu_k\in\; ]0,r/2]$ such that $\wb{W_{2\nu_k}}\sub Q_k$.
Let $\ve:=\min\{\nu_1,\ldots,\nu_\ell\}$.
Then $A_{k,\ve}=\phi_k^{-1}(W_{2\ve})\sub \phi_k^{-1}(Q_k)=U\cap Y_k\sub U$
for each $k\in\{1,\ldots,\ell\}$
and thus $A_\ve\sub U$.
\end{numba}
\begin{numba}\label{the-epsis}
We let $\ve_1:=r/2$.
Using~\ref{hencefund},
we can pick a strictly decreasing sequence
$(\ve_n)_{n\in\N}$ of positive real numbers such that $\ve_n\to 0$ as $n\to\infty$
and
\[
A_{\ve_{n+1}}\sub \, B_{\ve_n}\quad\mbox{for all $\, n\in \N$.}
\]
Then $(A_{\ve_n})_{n\in\N}$ is a fundamental
sequence in~$\cU$; we abbreviate $U_n:=A_{\ve_n}$.
\end{numba}
\begin{numba}\label{theEn}
For each $n\in\N$, we consider the map
\[
\Lambda_n\colon \Gamma^\infty_\C(TU_n)_\R\to\prod_{k=1}^\ell
\Hol(W_{2\ve_n},\C^m)_\R,\quad
X\mto (d\phi_{k,\ve_n}\circ X\circ \phi_{k,\ve_n}^{-1})_{k=1}^\ell
\]
taking a vector field to its family of local representatives
and
\[
\lambda_n
\colon \Gamma^\infty_\C(TB_{\ve_n})_\R\to\prod_{k=1}^\ell
\Hol(V_{\ve_n},\C^m)_\R,\quad
X\mto (d\psi_{k,\ve_n}\circ X\circ \psi_{k,\ve_n}^{-1})_{k=1}^\ell.
\]
These mappings are real linear and topological embeddings
(homeomorphisms onto the image).
%
% later: may give citation or cf.
%
Given $j,k\in\{1,\ldots,\ell\}$ and $\ve\in \;]0,r/2]$,
define
\[
W_{j,k,\ve}:=\phi_{j,\ve}(A_{j,\ve}\cap A_{k,\ve})
\]
and let $\phi_{j,k,\ve}\colon W_{k,j,\ve}\to W_{j,k,\ve}$,
$x\mto \phi_{j,\ve}(\phi_{k,\ve}^{-1}(x))$
be the transition map between the charts~$\phi_{k,\ve}$ and~$\phi_{j,\ve}$.
Likewise, define $V_{j,k,\ve}:=\psi_{j,\ve}(B_{j,\ve}\cap B_{k,\ve})$
and let
$\psi_{j,k,\ve}\colon V_{k,j,\ve}\to V_{j,k,\ve}$
be the transition map between the charts~$\psi_{k,\ve}$ and~$\psi_{j,\ve}$.\\[2.3mm]
The image of $\Lambda_n$ is the closed vector subspace $S_n$
consisting of all $(\zeta_1,\ldots,\zeta_\ell)$
in the product such that, for all $j,k\in\{1,\ldots,\ell\}$
and $x\in W_{k,j,\ve_n}$,
we have
\[
\zeta_j(\phi_{j,k,\ve_n}(x))
=d\phi_{j,k,\ve_n}(x,\zeta_k(x)),
\]
i.e., the vector fields corresponding to
$\zeta_k|_{W_{k,j,\ve_n}}$ and $\zeta_j|_{W_{j,k,\ve_n}}$
are $\phi_{j,k,\ve_n}$-related.
The image $H_n$ of $\lambda_n$ is closed; it contains
$(\zeta_1,\ldots,\zeta_\ell)\in
\prod_{k=1}^\ell
\Hol(V_{\ve_n},\C^m)_\R$~with
\begin{equation}
\zeta_j(\psi_{j,k,\ve_n}(x))=
d\psi_{j,k,\ve_n}(x,\zeta_k(x))
\mbox{ for all $j,k\in\{1,\ldots,\ell\}$
and $x\in V_{k,j,\ve_n}$.}\label{condiH}
\end{equation}
Finally, we let $R_n$ be the closed vector subspace of
$\prod_{k=1}^\ell \Hol_b(V_{\ve_n},\C^m)_\R$
consisting of all $(\zeta_1,\ldots,\zeta_\ell)$
therein such that~(\ref{condiH}) holds.
Then $R_n$ is a Banach space.
The inclusion map
\begin{equation}\label{cts-inclu}
j_n\colon \prod_{k=1}^\ell\Hol_b(V_{\ve_n},\C^m)_\R\to
\prod_{k=1}^\ell \Hol(V_{\ve_n},\C^m)_\R
\end{equation}
is continuous linear and takes $R_n$ into~$H_n$. We give the vector subspace
\[
E_n:=(\lambda_n|^{H_n})^{-1}(R_n)
\]
of $\Gamma^\infty_\C(TB_{\ve_n})_\R$
the Banach space structure which makes
$\lambda_n|_{E_n}\colon E_n\to R_n$ an isometric
isomorphism. Since $j_n|_{R_n}\colon R_n\to H_n$
is continuous, so is the inclusion map $i_n\colon E_n\to\Gamma^\infty_\C(TB_{\ve_n})_\R$,
and we can compose with the continuous linear restriction map~$r_n$ from the latter space to $\Gamma^\infty_\C(TU_{n+1})_\R$
to get a continuous linear injective map
\[
E_n\to \Gamma_\C^\infty(TU_{n+1})_\R.
\]
Using a restriction map in each component, we get
a continuous linear map
\[
\rho_n\colon \prod_{k=1}^\ell\Hol(W_{2\ve_n},\C^m)_\R\to
\prod_{k=1}^\ell\Hol_b(V_{\ve_n},\C^m)_\R
\]
which takes $S_n$ into~$R_n$.
Using restriction maps in each component,
we obtain a compact operator
\[
h\colon \prod_{k=1}^\ell \Hol_b(V_{\ve_n},\C^m)_\R
\to \prod_{k=1}^\ell\Hol_b(V_{\ve_{n+1}},\C^m)_\R,
\]
by Lemma~\ref{gives-compact}.
Then also $h|_{R_n}$ is a compact operator.
Hence
\[
\kappa:=\rho_{n+1}\circ\Lambda_{n+1}\circ r_n\circ i_n\circ (\lambda_n|_{E_n})^{-1}\colon
R_n\to R_{n+1}
\]
is a compact operator.
In fact, this map is $h|_{R_n}$, viewed as a map
to the closed vector subspace~$R_{n+1}$
of $\prod_{k=1}^\ell\Hol_b(V_{\ve_{n+1}},\C^m)_\R$.
Then also the bonding map
\[
E_n\to E_{n+1}
\]
is a compact operator,
as it equals $(\lambda_{n+1})^{-1}|_{R_{n+1}}\circ \kappa\circ
\lambda_n|_{E_n}$.
We have therefore achieved the situation
outlined in~\ref{roter-faden}.
\end{numba}
\section{Proof of Theorem~C}\label{sec-proofB}
To prove Theorem~C, we retain the notation from the preceding section.\linebreak
Notably, we shall use
$M$, $M^*$, $\alpha$, $\theta$, $r$, $\delta_0$,
$A_{k,\ve}$, $B_{k,\ve}$, $A_\ve$, $\phi_{k,\ve}$,
$\psi_{k,\ve}$, $V_{j,k,\ve}$, and $\psi_{j,k,\ve}$
as in Section~\ref{sec-gens}.
The sets~$V_\ve$ and $W_\ve$ are as in~\ref{convents}.
\begin{numba}
Let $\Diff(M)$ be the Lie group of all smooth
diffeomorphisms $M\to M$,
which is modelled on the Fr\'{e}chet space
$\Gamma(TM)$ of all smooth vector fields on~$M$.
Recall that there is an open $0$-neighbourhood $V_2\sub \Gamma(TM)$
such that
\[
X(M)\sub \Omega
\]
and $\alpha\circ X\in \Diff(M)$ for each $X\in V_2$,
and such that
\[
U_2:=\{\alpha\circ X\colon X\in V_2\}
\]
is an open identity neighbourhood in $\Diff(M)$
and the map
\[
\psi_2\colon V_2\to U_2,\quad X\mto \alpha\circ X
\]
is a $C^\infty$-diffeomorphism (cf.\ \cite{Mic}, \cite{Mil}, \cite{Ham},
\cite{KaM}, \cite[2.7]{DaS}).\footnote{No confusion with
$V_\ve$ will arise.} Thus
\[
\phi_2:=\psi_2^{-1}\colon U_2\to V_2
\]
is a chart for $\Diff(M)$ around $\id_M$.
\end{numba}
\begin{numba}
There is an open $0$-neighbourhood $V\sub \Gamma^\omega(TM)$
such that
\[
X(M)\sub \Omega
\]
and
$\alpha\circ X\in \Diff^\omega(M)$
for each $X\in V$, and such that
\[
U:=\{\alpha\circ X\colon X\in V\}
\]
is an open identity neighbourhood in $\Diff^\omega(M)$
and the map
\[
\psi\colon V\to U,\quad X\mto \alpha\circ X
\]
is a $C^\infty$-diffeomorphism (cf.\
Theorem 2.6 and Proposition 2.9 in \cite{DaS}). Thus
\[
\phi:=\psi^{-1}\colon U\to V
\]
is a chart for $\Diff^\omega(M)$
around $\id_M$.
The inclusion map $\iota\colon \Diff^\omega(M)\to \Diff(M)$
is a smooth group homomorphism (cf.\ \cite[Proposition 2.8]{DaS}).
Hence, after shrinking $U$ and~$V$, we may assume
that $U\sub U_2$ (and thus $V\sub V_2$).
Let $i\colon\Gamma^\omega(TM)\to\Gamma(TM)$
be the inclusion map, which is continuous linear.
Then
\begin{equation}\label{hence-situ}
i\circ \phi=\phi_2\circ \iota|_U\,.
\end{equation}
\end{numba}
\begin{numba}\label{via-eval}
Let $\cg$ and $\cg_2$ be the Lie algebra of $\Diff^\omega(M)$
and $\Diff(M)$, respectively. Abbreviate $e:=\id_M$.
For $x\in M$,
let $\delta_x\colon \Diff(M)\to M$, $\gamma\mto \gamma(x)$
be evaluation at~$x$ (which is a smooth map)
and $\ev_x:=\delta_x\circ \iota \colon \Diff^\omega(M)\to M$,
$\gamma\mto\gamma(x)$,
which is smooth as well.
Then
\[
\beta:=d\phi|_{\cg}\colon \cg\to \Gamma^\omega(TM)\quad\mbox{and}\quad
\beta_2:=d\phi_2|_{\cg_2}\colon \cg_2\to \Gamma(TM)
\]
are isomorphisms of topological vector spaces.
It is well known that
\begin{equation}\label{dercha2}
\beta_2=(T_e\delta_x)_{x\in M},\quad
v\mto (T_e\delta_x(v))_{x\in M};
\end{equation}
see, e.g., \cite[Lemma~C.4\,(d)]{MeR}
(cf.\ also \cite[Remark~A.13]{AGS}).
Then also
\begin{equation}\label{dercha}
\beta=(T_e\ev_x)_{x\in M}.
\end{equation}
In fact, (\ref{hence-situ}) implies that
\[
i\circ d\phi|_{\cg}=d\phi_2|_{\cg_2}\circ T_e\iota
\]
and thus $i\circ \beta=\beta_2\circ L(\iota)$.
For $v\in \cg$, this implies that
$\beta(v)=i(\beta(v))=\beta_2(L(\iota)(v))=(T\delta_x T_e\iota(v))_{x\in M}
=(T(\delta_x\circ \iota)(v))_{x\in M}=(T\ev_x(v))_{x\in M}$.
\end{numba}
\begin{numba}
For $(\ve_n)_{n\in\N}$
as in \ref{the-epsis},
choose $\ve_n'\in \,]\ve_{n+1},\ve_n[$
for each $n\in\N$.
Let~$\Lambda_n$ be as in~\ref{theEn} and
\[
F_n:=\prod_{k=1}^\ell BC^1_\C(W_{2\ve_n'},\C^m)_\R.
\]
The map
\[
\Theta_n\colon \prod_{k=1}^\ell\Hol(W_{2\ve_n},\C^m)_\R
\to F_n
\]
which is the restriction map to $W_{2\ve_n'}$ in each component
is continuous linear.
Define a closed vector subspace
\[
D_n \sub \prod_{k=1}^\ell \Hol_b(V_{\ve_n'},\C^m)_\R
\]
and a Banach space structure isomorphic to~$D_n$
on a vector subspace $E(n)\sub \Gamma^\infty_\C(TB_{\ve_n'})_\R$
in analogy to $R_n\sub \prod_{k=1}^\ell \Hol_b(V_{\ve_n},\C^m)_\R$
and $E_n\sub \Gamma^\infty_\C(TB_{\ve_n})_\R$ in~\ref{theEn},
replacing~$\ve_n$ with~$\ve_n'$.
Thus, using $\zeta\!=\!(\zeta_1,\ldots,\zeta_\ell)\!\in\!\prod_{k=1}^\ell
\Hol_b(V_{\ve_n'},\C^m)_\R$,
\begin{equation}
D_n=\{\zeta\colon \!(\forall j,k)\;
\mbox{$(\id,\zeta_k)|_{V_{k,j,\ve_n'}}\!\!$
and $(\id,\zeta_j)|_{V_{j,k,\ve_n'}}\!\!$ are $\psi_{j,k,\ve_n'}$-related}\}.
\end{equation}
Let $\Xi_n\colon E(n) \to
\Gamma^\omega(TM)$ be the injective
continuous linear map taking~$X$ to~$X|_M$.
By construction, $\mu_n\colon E(n)\to D_n$, $X\mto (d\psi_{k,\ve_n'}\circ X\circ
\psi_{k,\ve_n'}^{-1})_{k=1}^\ell$
is an isomorphism of topological vector spaces.
\end{numba}
\begin{numba}
By (\ref{L1-is-DL}) in~\ref{muchi}
and \ref{roter-faden},
we have
\[
L^1([0,1],\Gamma^\omega(TM))=
\dl\, L^1([0,1],\Gamma^\infty_\C(TU_n)_\R).\vspace{-.7mm}
\]
Let $Q(n)$ be the set of all $\gamma\in Q_{\ve_n'}$
(as in \ref{thepq}) such that, moreover,
\[
\|\gamma\|_{\cL^1,\|\cdot\|_\infty}<\delta_0/4.\vspace{-.7mm}
\]
Let $P(n):=\{[\gamma]\colon\gamma\in Q(n)\}\sub P_{\ve_n'}$
and consider the direct product
$P(n)^\ell$ as an open convex $0$-neighbourhood
in
\[
L^1\big([0,1],(BC^1_\C(W_{2\ve_n'},\C^m)_\R)^\ell\big)\cong
L^1([0,1],BC^1_\C(W_{2\ve_n'},\C^m)_\R)^\ell;
\]
the identification with the product
will be reused, without mention.
Then
\[
O_n:=L^1([0,1],\Theta_n\circ\Lambda_n)^{-1}(P(n)^\ell)
\]
is a convex, open $0$-neighbourhood in
$L^1([0,1],\Gamma^\infty_\C(TU_n)_\R)$
and
\[
O_1\sub O_2\sub\cdots.
\]
\end{numba}
\begin{numba}
Given $[\gamma]\in O_n$, let us write
$[\Theta_n\circ \Lambda_n\circ \gamma]=([\gamma_1],\ldots,[\gamma_\ell])$
with $\gamma_1,\ldots,\gamma_\ell\in Q(n)$.
After replacing $\gamma(t)$ and $\gamma_1(t),\ldots,\gamma_\ell(t)$
with $0$ for $t$ in a Borel subset of $[0,1]$
of measure~$0$, we may assume that
\[
\Theta_n\circ\Lambda_n\circ\gamma=(\gamma_1,\ldots,\gamma_\ell).
\]
Thus $\gamma_k$ is the local representative of~$\gamma$ in the chart
$\phi_{k,\ve_n'}$, for all $k\in\{1,\ldots,\ell\}$
(i.e., $\gamma(t)|_{A_{k,\ve_n'}}$ and $\gamma_k(t)$
are $\phi_{k,\ve_n'}$-related for all $t\in[0,1]$).
The differential equation
\begin{equation}\label{onM}
\dot{y}(t)=\gamma(t)(y(t))
\end{equation}
on $A_{\ve_n'}$ satisfies local existence and
local uniqueness of Carath\'{e}odory solutions
as its version in the charts $\phi_{k,\ve_n'}$,
\[
y'(t)=\gamma_k(t)(y(t)),
\]
does so by Lemma~\ref{local-lem}.
Since
\[
\|\Psi^{[\gamma_k]}_{\ve_n'}(t)(y_0)-y_0\|
=\|\Psi^{[\gamma_k]}_{\ve_n'}(t)(y_0)-
\Psi^{[0]}_{\ve_n'}(t)(y_0)\|
< \delta_0/2
\]
for all $y_0\in V_{\ve_n'}$ by
definition of~$Q(n)$ and the Lipschitz
estimate in Lemma~\ref{local-lem}\,(b),
we can form
\begin{equation}\label{the-eta-k}
\zeta_{\gamma,k}(t)(y_0):=\pr_2\big(\theta_k^{-1}\big(y_0,\Psi^{[\gamma_k]}_{\ve_n'}(t)(y_0)
\big)\big)
\in B^{\C^m}_{\delta_0}(0),
\end{equation}
see~\ref{inverse-process}.
Using
\begin{equation}\label{fnk}
f_{n,k}\colon \!\!\bigcup_{y_0\in V_{\ve_n'}}\!\!
\{y_0\}\times B^{\C^m}_{\delta_0/2}(y_0)
\to B_{\delta_0}^{\C^m}(0)\sub\C^m,\;\,
(y_0,z)\mto \pr_2(\theta_k^{-1}(y_0,z))\vspace{-.7mm}
\end{equation}
(as discussed in Lemma~\ref{final-loc})
and the associated Lipschitz continuous mapping
\[
(f_{n,k})_*\colon D_{(f_{k,n})_*}\to BC(V_{\ve_n'},\C^m)
\]
on $D_{(f_{n,k})_*}\sub BC(V_{\ve_n'},\C^m)$ with Lipschitz constant~$2$
(as in Lemma~\ref{fstar-lip}), we can rewrite (\ref{the-eta-k}) as
\[
\zeta_{\gamma,k}(t)=(f_{n,k})_*\big(\Psi^{[\gamma_k]}_{\ve_n'}(t)\big).
\]
Thus
$\zeta_{\gamma,k}=C\big([0,1],(f_{n,k})_*)\big(\Psi^{[\gamma_k]}_{\ve_n'}\big)\big)$,
where
\[
C([0,1],(f_{n,k})_*)\colon C([0,1],D_{(f_{n,k})_*})\to
C([0,1],BC(V_{\ve_n'},\C^m)),\;\,
\zeta\mto (f_{n,k})_*\circ\zeta
\]
is Lipschitz continuous with Lipschitz constant~$2$
on the subset $C([0,1],D_{(f_{n,k})_*})$ of $C([0,1],BC(V_{\ve_n'},\C^m))$,
as a special case of Lemma~\ref{fstar-lip}.\\[2.3mm]
Since $L^1([0,1],\Theta_n\circ\Lambda_n)$
is continuous linear and thus Lipschitz continuous,
we deduce that $\zeta_{\gamma,k}\in C([0,1],BC(V_{\ve_n'},\C^m))$
is Lipschitz continuous in $[\gamma]\in O_n$.\\[2.3mm]
Since $\Psi^{[\gamma_k]}_{\ve_n'}(t)$ and $f_{n,k}$
are complex analytic, also the function\vspace{-.3mm} $(f_{n,k})_*\big(\Psi^{[\gamma_k]}_{\ve_n'}(t)\big)$
is complex analytic. Since $\Psi^{[\gamma_k]}_{\ve_n'}(t)$
maps $V_{\ve_n'}\cap\R^m$ into~$\R^m$\vspace{-.3mm}
and $f_{n,k}$ satisfies~(\ref{realdomim}),
also $(f_{n,k})_*\big(\Psi^{[\gamma_k]}_{\ve_n'}(t)\big)$
maps $V_{\ve_n'}\cap\R^m$ into~$\R^m$;
thus
\[
(f_{n,k})_*\big(\Psi^{[\gamma_k]}_{\ve_n'}(t)\big)\in\Hol_b(V_{\ve_n'},\C^m)_\R.
\]
By the preceding, $\zeta_\gamma\colon [0,1]\to \prod_{k=1}^\ell\Hol_b(V_{\ve_n'},\C^m)_\R$,
\[
t\mto (\zeta_{\gamma,1}(t),\ldots,\zeta_{\gamma,\ell}(t))
\]
is a continuous function.
\end{numba}
\begin{numba}
Let $\Phi^\gamma$
be the maximal flow of the differential
equation~(\ref{onM}) on~$A_{\ve_n'}\sub M^*$
(denoted $\Fl^\gamma$ in \cite{GaH}),
as in~\cite[10.23 and Definition 4.7]{GaH}.
Then
\begin{equation}\label{flo-via-loc}
\Phi^\gamma_{t,0}(x)
=
\phi_{k,\ve_n'}^{-1}(\Psi^{[\gamma_k]}_{\ve_n'}(t)(\phi_{k,\ve_n'}(x)))
\end{equation}
for all $x\in B_{k,\ve_n'}$,
as $\gamma(t)|_{A_{k,\ve_n'}}$ and $\gamma_k(t)$
are $\phi_{k,\ve_n'}$-related for all $t\in[0,1]$.
As a consequence,
\[
\zeta_\gamma(t)\in D_n\quad\mbox{for all $\,t\in[0,1]$.}
\]
To see this, let $j,k\in\{1,\ldots,\ell\}$
and $y_0\in V_{k,j,\ve_n'}$.
Let $x:=\psi_{k,\ve_n'}^{-1}(y_0)$.
Then $y_1:=\psi_{j,k,\ve_n'}\in V_{j,k,\ve_n'}$
and $x=\psi_{j,\ve_n'}^{-1}(y_1)$.
By the preceding, we have
\[
\zeta_{\gamma,k}(t)(y_0)=f_{n,k}\big(y_0,\Psi^{[\gamma_k]}_{\ve_n'}(t)(y_0)\big)
=d\psi_{k,\ve_n'}\big(\theta^{-1}\big(x,\Phi^\gamma_{t,0}(x)\big)\big)
\]
and $\zeta_{\gamma,j}(t)(y_1)
=d\psi_{j,\ve_n'}\big(\theta^{-1}(x,\Phi^\gamma_{t,0}(x)\big)\big)$.
As a consequence,
$\zeta_{\gamma,j}(t)(y_1)=d\psi_{j,k,\ve_n'}(\zeta_{\gamma,k}(t)(y_0))$,
whence $\zeta_{\gamma,k}(t)|_{V_{k,j,\ve_n'}}$
and $\zeta_{\gamma,j}(t)|_{V_{j,k,\ve_n'}}$
are $\psi_{j,k,\ve_n'}$-related.
\end{numba}
\begin{numba}
Summing up, we have a function
\[
O_n\to C([0,1],D_n),
\quad [\gamma]\mto \zeta_\gamma
\]
which is Lipschitz continuous since
it is so as a map to
$C([0,1],BC(V_{\ve_n'},\C^m)^\ell)
\cong C([0,1],BC(V_{\ve_n'},\C^m)^\ell$,
as each component
$\zeta_{\gamma,k}$ is Lipschitz continuous in~$[\gamma]$.\\[2.3mm]
As a consequence, also the function
\[
f_n\colon O_n\to C([0,1],\Gamma^\omega(TM)), \quad
[\gamma]\mto \xi_\gamma:=\Xi_n\circ\mu_n^{-1}\circ
\zeta_\gamma
\]
is Lipschitz.
The functions $f_n$ are
compatible as $n$ increases (as a consequence of Lemma~\ref{local-lem}\,(e)),
and thus
\[
f\colon \bigcup_{n\in\N}O_n\to C([0,1],\Gamma^\omega(TM)),\quad
[\gamma]\mto\xi_\gamma\vspace{-.8mm}
\]
is continuous, by Theorem~D.
Then $f(L)\sub C([0,1],V)$ for some open $0$-neighbourhood
$L\sub \bigcup_{n\in\N}O_n$,
and now $C([0,1],\psi)\circ f|_L\colon L\to C([0,1],\Diff^\omega(M))$
is a continuous map.
\end{numba}
\begin{numba}
Henceforth, let us write $\Fl^\gamma$ for the maximal flow
of the differential equation
\[
\dot{y}(t)=\gamma(t)(y(t))
\]
on~$M$, for $[\gamma]\in O_n$.
Moreover, write $\eta_\gamma(t):=\psi(\xi_\gamma(t))\in \Diff^\omega(M)$.
If $y_0\in M$, there exists $k\in\{1,\ldots,\ell\}$
such that $y_0\in B_{k,\ve_n'}$.
Since $\Psi^{[\gamma_k]}_{\ve_n'}(t)\in \Hol_b(V_{\ve_n'},\C^m)_\R$,
we have
$\Phi^\gamma_{t,0}(y_0)\in M$ for each
$t\in [0,1]$, by~(\ref{flo-via-loc}).
Thus $\Fl^\gamma_{t,0}(y_0)$
exists for all $t\in [0,1]$
and is given by
\[
\Fl^\gamma_{t,0}(y_0)=\Phi^\gamma_{t,0}(y_0)=\eta_\gamma(t)(y_0).
\]
Thus $\Fl^\gamma_{t,0}=\eta_\gamma(t)$.
\end{numba}
By the preceding, for each $[\gamma]\in L$
we have
\[
(\psi\circ f)([\gamma])=(t\mto \Fl^\gamma_{t,0})
=\Evol^r(L^1([0,1],\beta_2^{-1})([\gamma]))
\in \AC([0,1],\Diff(M))
\] (cf.\
\cite[Proposition~11.4]{MeR}),
whence
\[
(\iota\circ \psi\circ f)([\gamma])=\Evol^r(L^1([0,1],\beta^{-1})([\gamma]))
\]
by Lemma~\ref{recog},
which applies because of~(\ref{hence-situ})
(see also Remarks~\ref{single-hom}
and~\ref{recog-left-right}).
Now \cite[Theorem 4.3.13]{Nik},
combined with Lemma~\ref{la-left-right},
shows that $\Diff^\omega(M)$ is $L^1$-semiregular.
By the preceding, $\Evol^r\colon L^1([0,1],\cg)\to
C([0,1],\Diff^\omega(M))$
is continuous on some $0$-neighbourhood in
$L^1([0,1],\cg)$.
Thus $\Diff^\omega(M)$ is $L^1$-regular
by Lemma~\ref{la-left-right}
and Theorem~E. $\,\square$
\section{Proofs for Theorems~A and~B}
To prove Theorem~A, let $p\in [1,\infty]$.
Since $G:=\Diff^\omega(M)$ is $L^1$-regular by Theorem~C,
it is $L^p$-regular.
Let $\Diff(M)$ be the Lie group of all smooth diffeomorphisms
of~$M$.
As the inclusion map $G\to\Diff(M)$ is
a smooth group homomorphism (cf.\ \cite[Proposition 2.8]{DaS}) and
the left action $\Diff(M)\times M\to M$,
$(\phi,x)\mto \phi(x)$ is smooth (cf., e.g., \cite[Lemma~1.19\,(a)
and Proposition~1.23]{AGS}),
also the left action
\[
\Lambda\colon G\times M\to M,\quad (\phi,x)\mto \phi(x)
\]
is smooth. Thus also the right action
\[
\sigma\colon M\times G\to M,\quad (x,\phi)\mto \phi^{-1}(x)
\]
is smooth.
We identify $\cg:=T_eG$ with $\Gamma^\omega(TM)$
by means of the isomorphism
\[
T_eG\to \Gamma^\omega(TM),\quad v\mto (T\ev_x(v))_{x\in M}
\]
discussed in~\ref{via-eval},
where $\ev_x\colon G\to M$, $\phi\mto\phi(x)$ is
the evaluation at $x\in M$ (and thus $\ev_x=\Lambda(\cdot,x)$).
Using this identification, the tangent map $T\ev_x=T\Lambda(\cdot,x)$
corresponds to the evaluation map
\[
\Gamma^\omega(TM)\to TM,\quad X\mto X(x).
\]
The left and right evolution maps
$L^p([0,1],\Gamma^\omega(TM))\to \AC_{L^p}([0,1],G)$
exist by $L^p$-regularity, and are related via
\[
\Evol(-[\gamma])=\Evol^r([\gamma])^{-1}
\]
for $[\gamma]\in L^p([0,1],\Gamma^\omega(TM))$,
using pointwise the inversion $j\colon G\to G$, $g\mto g^{-1}$,
which satisfies $T_ej=-\id_{\cg}$.
Now consider the time-dependent fundamental vector field
$(-\gamma)_\sharp\colon [0,1]\to\Gamma^\omega(TM)$,
$t \mto (-\gamma(t))_\sharp$
associated with $-\gamma$, given~by
\begin{eqnarray*}
(-\gamma(t))_\sharp(x)&:=&(T\sigma(x,\cdot))(-\gamma(t))
=T(\sigma(x,\cdot)\circ j)(\gamma(t))=T(\Lambda(\cdot,x))(\gamma(t))\\
&=&\gamma(t)(x)\quad\mbox{for $t\in[0,1]$ and $x\in M$.}
\end{eqnarray*}
Thus $(-\gamma)_\sharp=\gamma$.
By \cite[Theorem~10.19]{GaH}, the differential equation
$\dot{y}(t)=(-\gamma(t))_\sharp(y(t))=\gamma(t)(y(t))$
on~$M$
satisfies local existence and local uniqueness
of Carath\'{e}odory solutions, maximal solutions
are defined on all of $[0,1]$,
and the corresponding flow is given by
\begin{eqnarray*}
\Fl^\gamma_{t,t_0}(y_0)&=&
\Fl^{(-\gamma)_\sharp}_{t,t_0}(y_0)
=\sigma(y_0,\Evol(-[\gamma])(t_0)^{-1}\Evol(-[\gamma])(t)) \\
&=&\Evol(-[\gamma])(t)^{-1}(\Evol(-[\gamma])(t_0)(y_0)) \\
&=&\Evol^r([\gamma])(t)(\Evol^r([\gamma])(t_0)^{-1}(y_0))
\end{eqnarray*}
for $y_0\in M$ and $t,t_0\in [0,1]$.
Thus
\[
\Fl_{t,0}^\gamma(y_0)=\Evol^r([\gamma])(t)(y_0)
\]
for all $t\in[0,1]$ and $y_0\in M$, and thus
\[
\Fl_{t,0}^\gamma=\Evol^r([\gamma])(t)
\]
is an $\Diff^\omega(M)$-valued
$\AC_{L^p}$-function of $t\in [0,1]$
which depends smoothly on~$[\gamma]$.
Fix $t_0\in[0,1]$. The evaluation
\[
\AC_{L^p}([0,1],G)\to G,\;\,\theta\mto \theta(t_0)
\]
at $t_0$ being smooth (see \cite[Lemma~4.2.30]{Nik}),
we see that $h([\gamma]):=(\Fl_{t_0,0}^\gamma)^{-1}\in G$
depends smoothly on $[\gamma]\in L^p([0,1],\Gamma^\omega(TM))$.
For $g\in G$, let
$\rho_g\colon G\to G$, $\psi\mto\psi \circ g$
be the smooth right translation by~$g$
in the Lie group~$G$.
Consider the constant function
$C_g\in \AC_{L^p}([0,1],G)$ given by $C_g(t):=g$.
The map
\[
G\to \AC_{L^p}([0,1],G),\quad g\mto C_g
\]
is smooth, as the conclusions of \cite[Lemma~4.9]{MeR}
also hold (with $\K:=\R$, $r:=\infty$,
and $L^p$ in place of~$\cE$)
in the setting of~\cite{Nik},
for Lie groups modelled on sequentially complete locally convex spaces
(with analogous proof).
Since
\[
\Fl^\gamma_{t,t_0}=\Fl_{t,0}^\gamma \circ \, (\Fl^\gamma_{t_0,0})^{-1}
\]
for each $t\in[0,1]$, we see that
\[
\Fl^\gamma_{\cdot,t_0}=\rho_{h([\gamma])}\circ \Evol^r([\gamma])
=\Evol^r([\gamma])C_{h([\gamma])}
\]
is an element of $\AC_{L^p}([0,1],G)$
which depends smoothly on $[\gamma]$.
For $y_0\in M$, the evaluation
$\ev_{y_0}\colon \Diff^\omega(M)\to M$, $\psi\mto\psi(y_0)$
at~$y_0$ is a smooth map.
Using~\ref{chain-abs},
we deduce that
$\eta_{t_0,y_0}=\Fl^\gamma_{\cdot,t_0}(y_0)=\ev_{y_0}\circ \Fl^\gamma_{\cdot,t_0}$
is an $\AC_{L^p}$-function.
This completes the proof. $\,\square$\\[2.3mm]
{\bf Proof of Theorem~B.}
By \cite[Theorem~B]{MeR},
the Lie group $G :=  \Diff_c(M)$
is $L^1$-regular in the sense of \cite[Definition 5.16]{MeR},
where Borel measurability is used
instead of Lusin measurability.
By \cite[Lemma~1.41\,(d)]{MeR},
its modelling space $\Gamma_c(TM)$
of compactly supported smooth vector fields
has a certain
\emph{Fr\'{e}chet exhaustion property}
introduced in \cite[Definition 1.38]{MeR},
abbreviated~(FEP).
As a consequence, $G$
also is $L^1$-regular in the sense of~\cite{Nik}
we are using here
(cf.\ \cite[Remark~2.11]{Nik}).
The left action
%
% later give reference to box products paper
%
$\Lambda\colon G \times  M \to  M$,
$(\phi,x)\mto \phi(x)$
is smooth. Thus also the right action
$\sigma\colon M\times G\to M$,
$(x,\phi)\mto \phi^{-1}(x)$
is smooth.
Using \cite[Lemma~C.4]{MeR},
we identify $\cg:=T_eG$ with $\Gamma_c(TM)$
by means of the isomorphism
$T_eG\to \Gamma_c(TM)$,
$v\mto (T\delta_x(v))_{x\in M}$,
where $\delta_x\colon G\to M$, $\phi\mto\phi(x)$ is
the evaluation at $x\in M$ (and thus $\delta_x=\Lambda(\cdot,x)$).
Using this identification, $T\delta_x=T\Lambda(\cdot,x)$
corresponds to the evaluation map
$\Gamma_c(TM)\to TM$, $X\mto X(x)$.
We can now repeat the remainder of the proof of Theorem~A,
replacing the symbols $\ev$ and $\Gamma^\omega$
with $\delta$ and $\Gamma_c$, respectively. $\,\square$
\section{Proof of Theorem~F, and related results}
We first show that $f$ is continuous at each $x\in U$.
As in the preceding proof, we may assume
that $x\in U_1$ and $x=0$.
After replacing~$U_n$ with $U_n\cap (-U_n)$,
we may assume that the convex open $0$-neighbourhood~$U_n$
satisfies $U_n=-U_n$ and thus $[{-1},1]U_n\sub U_n$.
Let $\Sph:=\{z\in\C\colon |z|=1\}$ be the circle group.
For each $n\in\N$,
the map $m_n\colon\Sph\times E_n\to E_n$, $(z,y)\mto zy$
is continuous, whence
\[
Q_n:=\bigcap_{z\in \Sph}zU_n=\{y\in E_n\colon \Sph\times\{y\}\sub m_n^{-1}(U_n)\}\vspace{-1.1mm}
\]
is open, using the Wallace Theorem~\cite{Key} which allows
the compact set $\{y\}$ on the right-hand side
to be inflated to an open $y$-neighbourhood.
Since $Q_1\sub Q_2\sub\cdots$
by construction, after replacing~$U_n$ with~$Q_n$
we may assume that each~$U_n$
is
an open absolutely convex $0$-neighbourhood
in the complex locally convex space~$E_n$.
Let $q_n$ be the Minkowski functional of~$\frac{1}{3}U_n$.\\[2.3mm]
Let $v\in \frac{1}{3}U_n$
and $w\in B^{q_n}_1(0)=\frac{1}{3}U_n$.
Then $v+zw\in U_n$ for all $z\in\C$ with $|z|<2$.
The map
\[
h\colon \{z\in\C\colon |z|<2\}\to F,\quad z\mto f_n(v+zw)
\]
is complex analytic
and $df_n(v,w)=h'(0)$.
If~$p$ is a continuous seminorm on~$F$
and
\[
M_{n,p}:=\sup p(U_n)\in[0,\infty[
\]
for $n\in\N$, then
\[
p(df_n(v,w))=p(h'(0))\leq M_{n,p}
\]
by the Cauchy estimates. As $df_n(v,\cdot)\colon E_n\to F$
is complex linear for $v\in \frac{1}{3}U_n$, we deduce that
\begin{equation}\label{estinicer}
p(f_n(v,w))\leq M_{n,p}\, q_n(w)\quad\mbox{for all $\,w\in E_n$.}
\end{equation}
For all $v,w\in \frac{1}{3}U_n$, by convexity $v+t(w-v)\in\frac{1}{3}U_n$ for all
$t\in [0,1]$ and
\begin{eqnarray*}
p(f(w)-f(v))&= &p(f_n(w)-f_n(v))\leq\int_0^1 p(df_n(v+t(w-v),w-v))\,dt\\
&\leq & M_{n,p}\,q_n(w-v),
\end{eqnarray*}
using the Mean Value Theorem (see \cite[Proposition 1.18]{Sme})
and (\ref{estinicer}).
Thus $f_n|_{\frac{1}{3}U_n}$ is Lipschitz continuous.
Hence~$f$ is continuous on $\frac{1}{3}U$, by Theorem~D.\\[2.3mm]
To establish complex analyticity of~$f$, let $\tilde{F}$ be a completion of~$F$
such that $F\sub\tilde{F}$.
Given $x\in U$ and $y\in E$, the set
\[
W:=\{z\in\C\colon x+zy\in U\}
\]
is open in~$\C$.
Consider $W\to \tilde{F}$, $z\mto f(x+zy)$.
There exists $n\in \N$ such that $x\in U_n$
and $y\in E_n$. Given $z_0\in W$,
after increasing~$n$ we may assume that
$x+z_0y\in U_n$. Then $x+zy\in U_n$ for
all~$z$ in an open neighbourhood $Z$ of~$z_0$ in~$\C$,
as $U_n$ is open in~$E_n$.
Moreover,
\[
\frac{d}{dz}f(x+zy)=df_n(x+zy,y)
\]
exists for all $z\in Z$.
In the terminology of~\cite{BaS},
we have shown that $f\colon U\to\tilde{F}$
is complex differentiable on $U\cap L$
for each affine line $L\sub E$.
Thus $f\colon U\to\tilde{F}$
is G-analytic in the sense of
\cite[Definition~5.5]{BaS}, by \cite[Proposition~5.5]{BaS}.
Being G-analytic and continuous,
the map $f\colon U\to \tilde{F}$ is complex analytic,
by \cite[Theorem~6.1\,(i)]{BaS}.
For each $x\in U$ and balanced open $0$-neighbourhood
$Y\sub E$ such that $x+Y\sub U$, we therefore
have
\[
f(x+y)=\sum_{k=0}^\infty\frac{1}{k!}\delta^k_xf(y),
\]
using the G\^{a}teaux derivatives
$\delta^k_xf\colon E\to\tilde{F}$, $y\mto\frac{d^k}{dz^k}\big|_{z=0}f(x+zy)$ 
which are $\tilde{F}$-valued continuous homogeneous polynomials.
For $x\in U$ and $y\in E$,
pick $n\in \N$ with $x\in U_n$ and $y\in E_n$;
then $x+zy\in U_n$ for $z\in\C$ close to~$0$
and
\[
\delta^k_{x+zy}f(y)=\delta^k_{x+zy}f_n(y)\in F
\]
for all $k\in\N_0$, by a simple induction.
Thus each $\delta^k_xf$ is an $F$-valued
continuous homogeneous
polynomial on~$E$, and thus $f\colon U\to F$
is complex analytic. $\,\square$
\begin{thm}\label{characteri}
Let $E=\bigcup_{n\in\N}E_n$ be the locally
convex direct limit of a direct sequence
$E_1\sub E_2\sub\cdots$ of complex locally convex spaces.
We assume that $E$ is Hausdorff and consider
a function $f\colon U\to F$
from an open subset $U\sub E$ to a complex
locally convex space~$F$.
Consider the following conditions:
\begin{itemize}
\item[\rm(a)]
$f$ is complex analytic.
\item[\rm(b)]
For each $x\in U$ and each continuous $\C$-linear map
$\Lambda\colon F\to Y$ to a complex Banach space~$Y$,
there exist $m\in \N$ with
$x\in E_m$ and open, convex $x$-neighbourhoods
$V_n\sub E_n$ for $n\geq m$ with
$V_m\sub V_{m+1}\sub \cdots$
such that $V_n\sub U$
for each $n\geq m$ and $\Lambda\circ f|_{V_n}$ is complex analytic and bounded.
\item[\rm(c)]
For each $x\in U$,
there exist $m\in \N$ with
$x\in E_m$ and open, convex $x$-neighbourhoods
$W_n\sub E_n$ for $n\geq m$ with
$W_m\sub W_{m+1}\sub \cdots$
such that $W_n\sub U$ for all $n\geq m$
and $f|_{W_n}$ is complex analytic.
\end{itemize}
Then {\rm(a)} holds if and only if both
{\rm(b)} and {\rm(c)} are satisfied.
If $F$ is Mackey complete,
then {\rm(a)} and {\rm(b)}
are equivalent.
\end{thm}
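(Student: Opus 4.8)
The plan is to verify the two easy implications (a)$\Rightarrow$(b) and (a)$\Rightarrow$(c) directly, to deduce (a) from the conjunction of (b) and (c) by localizing the argument already used for Theorem~\ref{thmD}, and finally, when $F$ is Mackey complete, to recover (c) from (b) by a weak-to-strong analyticity principle. The substance lies in the last two steps; the first two are routine.

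For (a)$\Rightarrow$(c) I would use that each inclusion $E_n\to E$ is continuous and linear, so that $f|_{E_n\cap U}$ is complex analytic whenever $f$ is. Given $x\in U$, pick $m$ with $x\in E_m$ and a convex open $x$-neighbourhood $W\sub U$ in~$E$; then $W_n:=W\cap E_n$ for $n\geq m$ is convex, open in~$E_n$, nested, contained in~$U$, and $f|_{W_n}$ is complex analytic. For (a)$\Rightarrow$(b), given in addition a continuous $\C$-linear $\Lambda\colon F\to Y$ into a Banach space, the map $\Lambda\circ f$ is complex analytic, hence locally bounded as a $Y$-valued analytic map; shrinking~$W$ so that $\Lambda\circ f$ is bounded on it and setting $V_n:=W\cap E_n$ yields the required neighbourhoods.

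For (b)$\wedge$(c)$\Rightarrow$(a), I would first extract continuity of~$f$ from~(b) alone. Given a continuous seminorm~$p$ on~$F$, let $Y_p$ be the Banach space completion of $F/p^{-1}(0)$ and $\Lambda_p\colon F\to Y_p$ the canonical map, so that $p=\|\cdot\|_{Y_p}\circ\Lambda_p$. Applying~(b) to $\Lambda_p$ and running the Cauchy-estimate computation from the proof of Theorem~\ref{thmD} on the neighbourhoods $V_n$ provided, I obtain a Lipschitz estimate $p(f(w)-f(v))\leq M\,q_n(w-v)$ on a convex $x$-neighbourhood in~$E_n$; as this holds for every~$p$, the relevant restrictions of~$f$ are Lipschitz continuous in the sense of Definition~\ref{deflip}, and Theorem~\ref{thmC} makes~$f$ continuous. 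Condition~(c) then supplies, via slice analyticity, complex differentiability of~$f$ along each affine line, with values in~$F$. Passing to a completion~$\tilde F$ of~$F$, the map $f\colon U\to\tilde F$ is thus continuous and complex differentiable along affine lines, hence G-analytic by \cite[Proposition~5.5]{BaS} and complex analytic by \cite[Theorem~6.1\,(i)]{BaS}, exactly as in the proof of Theorem~\ref{thmD}; and since~(c) places the Taylor coefficients $\delta^k_xf(y)=\delta^k_xf_n(y)$ in~$F$, the map $f\colon U\to F$ is complex analytic. Here both hypotheses are genuinely used: (c) provides the $F$-valued line derivatives and coefficients, while~(b) supplies the boundedness that~(c) lacks and that the Cauchy estimates require.

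Finally, assume $F$ Mackey complete and only~(b). Continuity and local boundedness of~$f$ follow as above. Taking the case $Y=\C$, so that $\Lambda$ ranges over the continuous functionals $\lambda\in F'$, and using that~(b) is assumed at every point of~$U$, I conclude that $\lambda\circ f|_{E_n}$ is complex analytic throughout $U\cap E_n$; thus $f|_{E_n}$ is weakly complex analytic and locally bounded. At this point I would invoke the weak-to-strong analyticity principle for Mackey complete targets: a continuous, locally bounded, weakly complex analytic map into a Mackey complete space is complex analytic, the completeness being used precisely to pass the weakly convergent difference quotients (equivalently, the Taylor coefficients computed functional-by-functional) to a limit in~$F$. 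Applied on a convex slice $W_n:=W\cap E_n$ inside the neighbourhood on which continuity was established, this yields complex analyticity of $f|_{W_n}$ and hence~(c); combined with~(b), the previous paragraph gives~(a), so (a)$\Leftrightarrow$(b). I expect this weak-to-strong step to be the main obstacle, since it is where Mackey completeness genuinely enters and where care is needed to replace the $\Lambda$-dependent neighbourhoods of~(b) by a single family~$W_n$ independent of~$\Lambda$, using that analyticity is a local property and that~(b) holds at every point.
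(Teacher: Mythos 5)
Your proof of (a)$\Rightarrow$(b), (a)$\Rightarrow$(c) and of (b)$\wedge$(c)$\Rightarrow$(a) coincides with the paper's: the paper likewise takes $V_n:=V\cap E_n$, $W_n:=W\cap E_n$, composes with the canonical maps $\pi_p\colon F\to\tilde F_p$ into the Banach completions of $F/p^{-1}(\{0\})$ to get continuity (it cites Theorem~\ref{thmD} where you unroll its Lipschitz estimate and cite Theorem~\ref{thmC}; same content), and then uses (c) for G-analyticity along lines plus \cite[Proposition~5.5]{BaS} and \cite[Theorem~6.1\,(i)]{BaS}. Where you genuinely diverge is the Mackey complete case. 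The paper stays with the seminorm maps $\pi_p$: it observes that a completion $\tilde F$ of $F$ is the projective limit of the Banach spaces $\tilde F_p$, so that complex analyticity of $f$ as a map into $\tilde F$ can be checked componentwise (an elementary consequence of the initial topology, \cite[Lemma~10.3]{BGN}); Mackey completeness is used only at the very end, to see that the G\^{a}teaux derivatives $\delta^k_xf(y)$, computed by Cauchy's integral formula, already lie in~$F$. You instead specialize (b) to $Y=\C$ and invoke a weak-to-strong analyticity principle to first recover~(c). That route can be made to work, but the principle you appeal to is the nontrivial part, and your stated mechanism is not quite right: Mackey completeness does not make weakly convergent difference quotients converge in~$F$; what one actually needs is that scalarly holomorphic curves into Mackey complete spaces are holomorphic (cf.\ \cite[Theorem~7.4]{KaM}), whose proof shows the difference quotients form a \emph{Mackey--Cauchy} net via second-order difference quotients and a boundedness argument. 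The paper's choice of seminorms rather than functionals is precisely what lets it sidestep any genuine weak-strong principle, at the cost of only the elementary projective-limit reduction; your version is shorter on paper but hides the hardest analysis in the cited principle.
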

\begin{proof}
(a) implies~(c): If $x\in U$, there exist a convex, open $x$-neighbourhood
$W\sub U$ and $m\in\N$ such that $x\in E_m$. We can take $W_n:=W\cap E_n$
for $n\geq m$.\\[2.3mm]
(a) implies~(b): For $x\in U$ and $\Lambda$ as in~(b),
there exist a convex, open $x$-neighbourhood
$V\sub \{y\in U\colon \Lambda(f(y))\in B^Y_1(\Lambda(f(x)))\}$
and $m\in\N$ such that $x\in E_m$. Then $\Lambda\circ f|_V$
is bounded and we can take $V_n:=V\cap E_n$
for $n\geq m$.\\[2.3mm]
To complete the proof, let $\Gamma$ be the set of all continuous
seminorms~$p$ on~$F$. For any such, let $\tilde{F}_p$
be a completion of the normed space $F_p:=F/p^{-1}(\{0\})$
associated with~$p$ such that $F_p\sub \tilde{F}_p$
and $\pi_p\colon F\to \tilde{F}_p$
be the map sending $x\in F$ to its coset $x+p^{-1}(\{0\})$.
The topology on~$F$ is initial with respect to
the linear maps $\pi_p$ for $p\in\Gamma$.\\[2.3mm]
Now assume that (b) holds. Then $\tilde{\pi}_p\circ f$ is complex
analytic for each $p\in\Gamma$, by Theorem~F,
whence~$f$ is continuous. If, moreover, (c)
holds, for the open $x$-neighbourhood~$W$ we now see
as in the second half of the proof of Theorem~F
that $f|_W$ is G-analytic and thus complex analytic
(being continuous as just shown).
If~(b) holds and $F$ is Mackey complete,
we let $\tilde{F}$ be a completion of~$F$
such that $F\sub\tilde{F}$
and let $\tilde{\pi}_p \colon\tilde{F}\to\tilde{F}_p$
be the unique continuous extension of~$\pi_p$,
for $p\in\Gamma$. Then $\tilde{F}$,
together with the maps $\tilde{\pi}_p$,
is a projective limit of the spaces $\tilde{F}_p$,
using the continuous extensions of the maps
$x+p^{-1}(\{0\})\mto x+q^{-1}(\{0\})$
as the bonding maps $\tilde{F}_p\to\tilde{F}_q$
if $q\leq p$ pointwise.
Henceforth, we consider $f$ as a map to~$\tilde{F}$.
Since $\tilde{\pi}_p\circ f$
is complex analytic for each $p\in\Gamma$,
we deduce that $f$ is complex analytic as a map to~$\tilde{F}$
(cf.\ \cite[Lemma~10.3]{BGN}).
Since $f(U)\sub F$ and~$F$ is Mackey complete,
all G\^{a}teaux derivatives
$\delta^k_xf(y)$ have values
in~$F$, as they can be calculated using Cauchy's
integral formula for $k$th complex derivatives.
The continuous function $f\colon U\to F$ is therefore
locally given by series of continuous homogeneous
polynomials to~$F$, and thus complex analytic
as a map to~$F$.
\end{proof}
\begin{rem}
Condition~(c) in Theorem~\ref{characteri}
does not imply~(a). In fact,
if $E$ is a non-normable complex locally convex space
which is a direct limit $E=\dl\,E_n$\vspace{-.9mm}
of normed spaces $E_1\sub E_2\sub\cdots$ over~$\C$, then the
evaluation map
\[
f\colon E'\times E\to\C,\quad (\lambda, x)\mto\lambda(x)
\]
is not continuous (and hence not complex analytic)
if we use the topology of bounded
convergence on dual spaces (cf.\ \cite[p.\,2]{KaM}).
Yet, $E'\times E=\dl\, (E'\times E_n)$\vspace{-.9mm}
holds and the restriction $f|_{E'\times E_n}$ is continuous $\C$-bilinear
(and hence complex analytic) for each $n\in\N$.
\end{rem}
\appendix
\section{Calculus in locally convex spaces}\label{appA}
We record our conventions and notation
concerning $C^k$-maps and analytic maps
between open subsets of locally convex spaces,
and the corresponding manifolds.
\begin{numba}
If $E$ and $F$ are locally convex spaces over $\K\in\{\R,\C\}$
and $f\colon U\to F$ is an $F$-valued function
on an open subset $U\sub E$,
we write
\[
df(x,y):=D_yf(x):=\frac{d}{dt}f(x+ty):=\lim_{t\to 0}
\frac{1}{t}(f(x+ty)-f(x))
\]
for the directional derivative of~$f$ at $x\in U$
in the direction $y\in E$, if it exists
(with $0\not=t\in\K$ such that $x+ty\in U$).
Given $k\in\N_0\cup\{\infty\}$,
we say that a map $f\colon U\to F$
is $C^k_\K$ if $d^{\,(0)}f:=f$ is continuous,
the directional derivative
\[
d^{\,(j)}f(x,y_1,\ldots, y_j):=(D_{y_j}\cdots D_{y_1}f)(x)
\]
exists in~$F$ for all $j\in \N$ with $j\leq k$
and all $(x,y_1,\ldots,y_j)\in U\times E^j$,
and the mappings
$d^{\,(j)}f\colon U\times E^j\to F$ so obtained are continuous.
Then
\[
f^{\,(j)}(x)\colon E^j\to F,\quad (y_1,\ldots,y_j)\to d^{\,(j)}f(x,y_1,\ldots, y_j)
\]
is continuous and symmetric $j$-linear over~$\K$,
for each $x\in U$.
We also write~$C^k$ in place of $C^k_\R$.
The $C^\infty_\R$-maps are also called \emph{smooth}.
This approach to calculus in locally
convex spaces, which goes back to \cite{Bas},
is also known as Keller's $C^k_c$-theory~\cite{Kel}.
We refer to \cite{Res}, \cite{GaN},
\cite{Ham}, \cite{Mil}, and \cite{Nee} for introductions
to this approach to calculus in the case $\K=\R$;
both $\K=\R$ and $\K=\C$ are treated in \cite{GaN} and~\cite{BGN}.
For the corresponding concepts of manifolds
and Lie groups modelled on a locally convex space,
see~\cite{Res}, \cite{GaN}, \cite{Nee}, \cite{Sme}, and \cite{BGN}.
If~$M$ is a $C^k_\K$-manifold modelled on a locally
convex space with $k\in\N\cup\{\infty\}$,
we let $TM$ be its tangent bundle
and write $T_xM$ for the tangent space at $x\in M$.
If~$V$ is an open subset of a locally convex space~$E$ over~$\K$,
we identify $TV$ with $V\times E$, as usual.
If $f\colon M\to N$ is a $C^k_\K$-map between $C^k_\K$-manifolds with $k\geq 1$,
we write
$Tf\colon TM\to TN$ for its tangent map
and define $T^jM:=T(T^{j-1}M)$ and $T^jf:=T(T^{j-1}f)\colon T^jM\to T^jN$
for all $j\in\N$ such that $j\leq k$
(using $T^0M:=M$ and $T^0f:=f$).
In the case of a $C^k_\K$-map $f\colon M\to V\sub E$,
we write $df$ for the second component of the tangent map
$Tf\colon TM\to TV=V\times E$
(see \cite{GaN,Nee,BGN}).
\end{numba}
\begin{numba}\label{indu-Ck}
We recall that a map $f\colon E\supseteq U\to F$
as above is $C^{k+1}$ for $k\in \N$
if and only if $f$ is $C^1$ and
$df\colon U\times E\to F$ is~$C^k$
(cf.\ \cite[Lemma 1.14]{Res}).
\end{numba}
\begin{numba}\label{vftopo}
If $M$ and $N$ are $C^k_\K$-manifolds modelled
on locally convex spaces, we endow
the set $C^k_\K(M,N)$ of all $C^k_\K$-maps
from $M$ to $N$ with the \emph{compact-open $C^k$-topology},
which is initial with respect to the maps
\[
T^j\colon C^k(M,N)\to C(T^jM,T^jN),\quad f\mto T^jf
\]
for $j\in\N_0$ such that $j\leq k$, where
$C(T^jM,T^jN)$ is endowed with the compact-open
topology
(see \cite[Definition~I.5.1]{Nee} and
\cite[Definition 4.1.2]{GaN}).
If $M$ is a $C^{k+1}_\K$-manifold (with $\infty+1:=\infty$)
and $\pi_{TM}\colon TM\to M$ its tangent bundle,
we endow the space $\Gamma^k_\K(TM)$
of $C^k_\K$-sections of $\pi_{TM}$
(the space of $C^k_\K$-vector fields)
with the topology induced
by $C^k_\K(M,TM)$
(see \cite[Definition I.5.2]{Nee}
and \cite[Definition 4.1.24]{GaN}),
which makes it a locally convex space over~$\K$
(see \cite[Proposition 4.1.25]{GaN};
compare also \cite{Mic,Mil,Ham,KaM}).
If $\K=\R$
and $k=\infty$, we abbreviate $\Gamma(TM):=\Gamma^\infty(TM)$.
If $M$ is a $\sigma$-compact finite-dimensional
smooth manifold, then $\Gamma(TM)$
is a Fr\'{e}chet space (see, e.g., \cite[Proposition 4.1.28]{GaN};
cf.\ also \cite[Example 1.1.5]{Ham}).
Likewise, $\Gamma_\C^\infty(TM)$
is Fr\'{e}chet for each finite-dimensional,
$\sigma$-compact $C^\infty_\C$-manifold~$M$.
\end{numba}
\begin{numba}\label{pardiff}
Let $E_1$, $E_2$, and $F$ be
locally convex spaces over $\K\in\{\R,\C\}$.
Let $U_1\sub E_1$ and $U_2\sub E_2$ be open subsets and $f\colon
U_1\times U_2\to F$ be a $C^1_\K$-map.
Then
\[
df((x_1,x_2),(y_1,y_2))=d_1f(x_1,x_2,y_1)+d_2f(x_1,x_2,y_2)
\]
for all $(x_1,x_2)\in U_1\times U_2$ and $(y_1,y_2)\in E_1\times E_2$,
where
\[
d_1f(x_1,x_2,y_1) :=d(f(\cdot,x_2))(x_1,y_1)
\]
and $d_2f(x_1,x_2,y_2):=d(f(x_1,\cdot))(x_2,y_2)$,
see \cite[Proposition~1.2.8]{GaN}
or \cite[Proposition 1.20]{Sme}.
\end{numba}
See \cite[2.2]{GaH} for the following fact:
\begin{numba}\label{at-point}\label{pwchain}
Let $E$ and $F$ be locally convex spaces, $U\sub E$ be open and
$f \colon U \to F$ be a $C^1$-map. If
$I \sub \R$ is a non-degenerate interval,
$\eta\colon  I \to E$ a function with $\eta(I)\sub U$ and
$t_0\in I$ such that the derivative
$\eta'(t_0)$ exists, then also $(f\circ\eta)'(t_0)$
exists and $(f\circ\eta)'(t_0)=df(\eta(t_0),\eta'(t_0))$.
\end{numba}
\begin{numba}\label{dotty}
Let $M$ be a $C^1$-manifold modelled
on a locally convex space~$E$. Let $I\sub\R$ be a non-degenerate
interval, $\eta\colon I\to M$
be a continuous map and $t_0\in I$.
We say that \emph{$\eta$ is differentiable
at $t_0$} if $\phi\circ \eta\colon \eta^{-1}(U_\phi)\to V_\phi$
is differentiable at $t_0$ for some chart
$\phi\colon U_\phi\to V_\phi\sub E$ of~$M$
such that $\eta(t_0)\in U_\phi$.
By~\ref{pwchain},
the latter then holds for any such chart,
and the tangent vector
\begin{equation}\label{stasta}
\dot{\eta}(t_0):=T\phi^{-1}((\phi\circ\eta)(t_0),(\phi\circ\eta)'(t_0))
\in T_{\eta(t_0)}M
\end{equation}
is well defined, independent of the choice of~$\phi$.
\end{numba}
See \cite[2.5]{GaH} for the following fact:
\begin{numba}\label{chain-pw}\label{C1onpw}
Let $f\colon M\to N$ be a $C^1$-map between
$C^1$-manifolds modelled
on locally convex spaces. If $I\sub \R$
is a non-degenerate interval, $t_0\in I$
and a continuous map $\eta\colon \!I \!\to\! M$ is differentiable
at~$t_0$, then $f\circ \eta\colon \!I\! \to\! N$ is differentiable at~$t_0$
and
\begin{equation}\label{tangpw}
(f\circ\eta)^\cdot(t_0)=Tf(\dot{\eta}(t_0)).
\end{equation}
\end{numba}
\begin{numba}
Let $E$ and $F$ be complex locally convex spaces
and $k\in\N_0$.
A mapping $p\colon E\to F$ is called a \emph{continuous
homogeneous polynomial of degree~$k$}
if there exists a continuous $k$-linear map
$\beta\colon E^k\to F$ such that $p(x)=\beta(x,\ldots,x)$
for all $x\in E$, with $k$ entries~$x$ (if $k=0$, we mean $p(x)=\beta(0)$).
A function $f\colon U\to F$ on an open subset $U\sub E$
is called \emph{complex analytic} if $f$ is continuous
and, for each $x\in U$, there exist continuous homogeneous
polynomials $p_k\colon E\to F$ of degree~$k$
and an open neighbourhood $V\sub U$ such that
\[
f(y)=\sum_{k=0}^\infty p_k(y-x)
\]
in~$F$, pointwise for $y\in V$.
A function $f\colon U\to F$ as before
is complex analytic if and only if it is $C^\infty_\C$;
if $F$ is sequentially complete (or at least Mackey complete
in the sense of~\cite{KaM}),
then $f$ is complex analytic if and only
if $f$ is $C^1_\C$
(see \cite[Propositions~7.4 and 7.7]{BGN}
or \cite[Theorem 2.1.12]{GaN}).
\end{numba}
\begin{numba}
If $E$ and $F$ are real locally convex spaces,
a function $f\colon U\to F$ on an open subset $U\sub E$
is called \emph{real analytic}
if there exist an open subset $U^*\sub E_\C=E\oplus iE$
with $U\sub U^*$ and a complex analytic function
$f^*\colon U^*\to F_\C$ such that
$f^*|_U=f$ (see \cite[Definition 2.3]{Res} and \cite[Definition 2.2.2]{GaN},
or already \cite{Mil}
if $E$ and $F$ are sequentially complete).
\end{numba}
Since \cite[Lemma~1.6.28]{GaN}
is not yet publicly available,
we record two lemmas.
\begin{la}\label{bilnew}
Let $E_1$, $E_2$, and $F$ be real vector spaces,
$\beta\colon E_1\times E_2\to F$
be a bilinear map and $q$, $q_1$, and $q$ be seminorms on $E_1$, $E_2$,
and $F$, respectively, such that
$\beta(x,y)\in B^q_1(0)$
for all $x\in B^{q_1}_1(0)$
and $y\in B^{q_2}_1(0)$.
Then
\[
q(\beta(x,y))\leq q_1(x)q_2(y)\quad\mbox{for all $(x,y)\in E_1\times E_2$.}
\]
\end{la}
\begin{proof}
Let $(x,y)\in E_1\times E_2$.
For all $s>q_1(x)$ and $t>q_2(y)$,
we have $q_1((1/s)x)<1$ and $q_2((1/t)y)<1$, whence
$q(\beta((1/s)x,(1/t)y))<1$ and hence
\[
q(\beta(x,y))<st.
\]
Letting $s\to q_1(x)$ and $t\to q_2(y)$, we deduce that
$q(\beta(x,y))\leq q_1(x)q_2(y)$.
\end{proof}
\begin{la}\label{new}
Let $E_1$, $E_2$ and $F$ be locally convex spaces,
$U_j\sub E_j$ be an open $0$-neighbourhood for $j\in \{1,2\}$
and $f\colon U_1\times U_2\to F$ be a $C^2$-function
such that $f(x,0)=0$ for all $x\in U_1$ and $f(0,y)=0$ for all
$y\in U_2$. Let $q$ be a continuous seminorm on~$F$.
Then there exist continuous seminorms $q_j$ on $E_j$ for $j\in \{1,2\}$
and convex open $0$-neighbourhoods $V_j\sub U_j$
such that
\[
q(f(x,y))\leq q_1(x)q_2(y)\quad\mbox{for all $(x,y)\in V_1\times V_2$.}
\]
\end{la}
\begin{proof}
Abbreviate $E:=E_1\times E_2$.
As the mapping $d^2f\colon U_1\times U_2 \times E\times E\to F$
is continuous and $d^2f((0,0),(0,0),(0,0))=0$,
the pre-image $(d^2f)^{-1}(B^q_1(0))$
is an open $0$-neighbourhood and hence contains
\[
V_1\times V_2\times
B^p_1(0)\times B^p_1(0)
\]
for certain convex open $0$-neighbourhoods $V_j\sub E_j$
and a continuous seminorm $p$ on~$E$.
Thus
\begin{equation}\label{wapply}
q(d^2f((x,y),v,w))\leq p(v)p(w)\quad\mbox{for all
$(x,y)\in V_1\times V_2$ and $v,w\in E$,}
\end{equation}
by Lemma~\ref{bilnew}.
After increasing $p$ if necessary, we may assume
that there exist continuous seminorms $q_j$ on $E_j$
for $j\in \{1,2\}$ such that
$p(x,y)=\max\{q_1(x),q_2(y)\}$ for all $(x,y)\in E$.
Since $f(0,y)=0$
for all $y\in U_2$, we have
\begin{equation}\label{diffnu}
df((0,y),(0,z))=0\quad\mbox{for all $y\in U_2$ and $z\in E_2$.}
\end{equation}
Let $(x,y)\in V_1\times V_2$.
Using $f(x,0)=0$ and the identity (\ref{diffnu}),
two applications of the Mean Value Theorem (see \cite[Proposition 1.18]{Sme})
show that
\begin{eqnarray}
f(x,y)&=& f(x,y)-f(x,0)=\int_0^1 df((x,ty),(0,y))\, dt\\
&=&\int_0^1 df((x,ty),(0,y))-df((0,ty),(0,y))\, dt\\
&=&\int_0^1\int_0^1 d^2f((sx,ty),(0,y),(x,0))\, ds\, dt\,.
\end{eqnarray}
Using (\ref{wapply}), we deduce that
$q(f(x,y))\leq \int_0^1\int_0^1q\big(d^2f((sx,ty),(0,y),(x,0))\big)\, ds\, dt$
$\leq \int_0^1\int_0^1 p(0,y)p(x,0)\,ds\,dt\leq
p(0,y)p(x,0)=q_1(x)q_2(y)$.
\end{proof}
\section{Absolutely continuous functions in locally\\
convex spaces and Carath\'{e}odory solutions}\label{secB}
We refer to \cite{FMP} and \cite{Nik} for background
on vector-valued $L^p$-spaces. For absolutely continuous functions,
see \cite{Nik} and \cite{GaH} (cf.\ also \cite{MeR});
for Carath\'{e}odory solutions
to differential equations, see \cite{GaH}
(cf.\ also \cite{Nik} and \cite{MeR}).
We closely follow~\cite{GaH} and recall some essentials.
\begin{numba}\label{def-lus}\label{close-to-borel}
Let $\tilde{\lambda}\colon \tilde{\cB}(I)\to[0,\infty]$
be Lebesgue measure on an interval $I\sub\R$.
A mapping $\gamma\colon I\to X$
to a topological space~$X$
is called \emph{Lusin measurable}
if there exists a sequence $(K_j)_{j\in\N}$
of compact subsets $K_j\sub I$ such that
\begin{itemize}
\item[(i)]
The restriction $\gamma|_{K_j}\colon K_j\to X$ is
continuous for each $j\in \N$;
\item[(ii)]
$\tilde{\lambda}(I\setminus \bigcup_{j\in \N}K_j)=0$.
\end{itemize}
Compare~\cite{FMP,Nik} and the references therein for
further information, also~\cite{Tho}.
If $X$ is second countable,
then a map $\gamma\colon I\to X$ is Lusin measurable
if and only if $\gamma$ is measurable as a
function
from $(I,\tilde{\cB}(I))$ to $(X,\cB(X))$,
where $\cB(X)$ is the $\sigma$-algebra of Borel sets
of~$X$ (compare, e.g., \cite[Lemma~4.1.8]{Nik}).
\end{numba}
\begin{numba}
If $E$ is a locally convex space,
$I\sub \R$ an interval
and $p\in [1,\infty]$,
we write $\cL^p(I,E)$
for the vector space
of all Lusin measurable mappings $\gamma\colon I\to E$
such that $\|\gamma\|_{\cL^p,q}:=\|q\circ\gamma\|_{\cL^p}<\infty$
for all continuous seminorms $q$ on~$E$.
The set $L^p(I,E)$
of equivalence classes $[\gamma]$
modulo functions vanishing
almost everywhere is a locally convex
space whose topology is determined by the seminorms
$\|\cdot\|_{L^p,q}$ given by
$\|[\gamma]\|_{L^p,q}:=\|\gamma\|_{\cL^p,q}$;
see \cite[Definition~3.3]{FMP};
cf.\ also \cite{Nik} (where $I$ is compact).
We mention that a Lusin measurable
function $\gamma\colon I\to E$
is in $\cL^\infty$ if and only if $\gamma(A)$
is bounded in~$E$ for some $A\in\wt{\cB}(I)$
such that $\wt{\lambda}(I\setminus A)=0$
(see \cite[Definition~2.2]{FMP});
this follows from  a Localization Lemma,
\cite[Lemma~2.1]{FMP},
due to Thomas~\cite{Tho}.
If $\gamma\colon I\to E$ is \emph{locally $\cL^p$}
in the sense that $\gamma|_{[a,b]}\in\cL^p([a,b],E)$
for all $a<b$ with $[a,b]\sub I$,
again we write $[\gamma]$ for the equivalence
class modulo functions
vanishing almost everywhere.
\end{numba}
\begin{numba}
If $E$ is sequentially complete,
we call $\eta\colon I\to E$
an $\AC_{L^p}$-function if $\eta$
is the primitive of some
$\gamma\colon I\to E$ which is locally $\cL^p$, i.e.,
\[
\eta(t)=\eta(t_0)+\int_{t_0}^t\gamma(s)\,ds
\]
for all $t\in I$ and some (and then each) $t_0\in I$,
using weak $E$-valued integrals with respect to Lebesgue measure.
Then $\eta':=[\gamma]$ is uniquely determined (cf.\ \cite[Lemma~2.28]{Nik}).
The $\AC_{L^1}$-functions are also
called \emph{absolutely continuous};
each $\AC_{L^p}$-function is absolutely continuous.
\end{numba}
\begin{numba}\label{charu}\label{chain-abs} (Chain Rule).
Let $E$ and $F$ be sequentially complete locally convex spaces,
$U\sub E$ be an open subset, $f\colon  U\to F$ a $C^1$-map
and $\eta\colon I\to E$ be an $AC_{L^p}$-function
such that $\eta(I)\sub U$.
Let $\gamma\colon I\to E$ be a locally $\cL^p$-function with $\eta'=[\gamma]$.
Then $f\circ\eta\colon I\to F$ is $\AC_{L^p}$ and
\[
(f\circ\eta)'=[t\mto df(\eta(t),\gamma(t))],
\]
by \cite[Lemma~3.7]{Nik} and its proof.
\end{numba}
\begin{numba}\label{numba-ode}
If $E$ is a sequentially complete locally convex
space, $W\sub\R\times E$ a subset,
$f\colon W\to E$ a function
and $(t_0,y_0)\in W$,
we call a function $\eta\colon I\to E$ on a non-degenerate interval $I\sub \R$
a \emph{Carath\'{e}odory solution} to the initial value problem
\begin{equation}\label{the-ivp}
y'(t)=f(t,y(t)),\quad y(t_0)=y_0
\end{equation}
if $\eta$ is absolutely continuous, $t_0\in I$ holds,
$(t,\eta(t))\in W$ for all $t\in I$, and the integral
equation
\begin{equation}\label{cara2}
\eta(t)=\eta(t_0)+\int_{t_0}^tf(s,\eta(s))\,ds
\end{equation}
is satisfied for all $t\in I$, which is equivalent to
the condition
\begin{equation}\label{cara3}
\eta'=[t\mto f(t,\eta(t))]\quad\mbox{and}\quad
\eta(t_0)=y_0.
\end{equation}
Carath\'{e}odory solutions to $y'(t)=f(t,y(t))$ are solutions
to initial value problems for some choice of $(t_0,y_0)\in W$.
\end{numba}
\begin{numba}
Let $M$ be a $C^1$-manifold modelled on a locally
convex space and $TM$ be its tangent bundle, with the bundle
projection $\pi_{TM}\colon TM\to M$.
If $\gamma\colon I\to TM$ is a Lusin measurable
function on an interval $I\sub\R$, we write $[\gamma]$
for the set of all Lusin measurable functions
$\eta\colon I\to TM$
such that $\pi_{TM}\circ\gamma=\pi_{TM}\circ\eta$
and $\gamma(t)=\eta(t)$ for almost all $t\in I$.
\end{numba}
\begin{numba}
Let $p\in [1,\infty]$ and $M$ be a $C^1$-manifold modelled on a sequentially
complete locally convex space~$E$.
For real numbers $a<b$, consider a continuous function
$\eta\colon [a,b]\to M$.
If $\eta([a,b])\sub U_\phi$ for some chart $\phi\colon U_\phi\to V_\phi\sub E$
of~$M$,
we say that $\eta$ is an \emph{$\AC_{L^p}$-map}
if $\phi\circ \eta\colon I\to E$ is so,
and let
\[
\dot{\eta}:=[t\mto T\phi^{-1}((\phi\circ\eta)(t),\gamma(t))]
\]
with $\gamma\in\cL^p([a,b],E)$ such that $(\phi\circ\eta)'=[\gamma]$.
By \ref{charu}, the $\AC_{L^p}$-property of~$\eta$
is independent of the choice of~$\phi$,
and so is~$\dot{\eta}$.
In the general case,
we call $\eta$ an \emph{$\AC_{L^p}$-map}
if $[a,b]$ can be subdivided
into subintervals $[t_{j-1},t_j]$
such that $\eta([t_{j-1},t_j])$
is contained in a chart domain
and $\eta|_{[t_{j-1},t_j]}$ is $AC_{L^p}$.
If $(\eta|_{[t_{j-1},t_j]})^{\cdot}=[\gamma_j]$,
we let $\dot{\eta}:=[\gamma]$ with $\gamma(t):=\gamma_j(t)$
if $t\in [t_{j-1},t_j[$ or $j$ is maximal and $t\in [t_{j-1}, t_j]$.
If $I\sub\R$ is an interval, we call a function $\eta\colon I\to M$
an \emph{$\AC_{L^p}$-map} if $\eta|_{[a,b]}$ is so for all $a<b$ such that
$[a,b]\sub I$. We define $\dot{\eta}=[\gamma]$ where
$\gamma$ is defined piecewise using representatives
of $(\eta|_{[a,b]})^{\cdot}$
for $[a,b]$ in a countable cover of~$I$.
The $\AC_{L^1}$-maps are also called \emph{absolutely continuous}.
\end{numba}
\begin{numba}\label{chainRNEW}
Let $f\colon M\to N$ be a $C^1$-map between
$C^1$-manifolds modelled on sequentially complete
locally convex spaces.
Let $I\sub \R$ be a non-degenerate interval and $\eta\colon I\to M$
be absolutely continuous. Let $\gamma\colon I\to TM$
be a Lusin measurable function such that $\pi_{TM}\circ\gamma=\eta$
and $\dot{\eta}=[\gamma]$.
Then $f\circ\eta\colon I\to N$ is absolutely
continuous and
\[
(f\circ \eta)^{\cdot}=[t\mto Tf(\gamma(t))],
\]
as a consequence of~\ref{charu}.
\end{numba}
\begin{numba}\label{def-ode-mfd-2}
If $M$ is a $C^1$-manifold modelled on a sequentially
complete locally convex space, $W\sub\R\times M$ a subset
and $f\colon W\to TM$ a function such that $f(t,y)\in T_yM$
for all $(t,y)\in W$, given $(t_0,y_0)\in W$
we call a function $\eta\colon I\to M$ on a non-degenerate interval $I\sub \R$
a \emph{Carath\'{e}odory solution} to the initial value problem
\begin{equation}\label{ivp-xyz}
\dot{y}(t)=f(t,y(t)),\quad y(t_0)=y_0
\end{equation}
if $\eta$ is absolutely continuous, $t_0\in I$ holds,
$(t,\eta(t))\in W$ for all $t\in I$,
\begin{equation}\label{cara13}
\dot{\eta}\, =\, [t\mto f(t,\eta(t))],\quad\mbox{and}\quad
\eta(t_0)=y_0.
\end{equation}
Solutions to the differential equation $\dot{y}(t)=f(t,y(t))$
are defined analogously.
\end{numba}
For terminology and basic facts concerning
local existence and local uniqueness of Carath\'{e}odory
solutions, see~\cite{GaH}.
Helge Gl\"{o}ckner, Universit\"{a}t Paderborn, Warburger Str.\ 100,
33098 Paderborn, Germany; glockner@math.uni-paderborn.de\vfill

\begin{thebibliography}{99}
%
\bibitem{CRO}
Agrachev, A. A. and Y. L. Sachkov,
``Control Theory from the Geometric Viewpoint,''
Springer, Berlin, 2004. 
%
%
\bibitem{AaS}
Alzaareer, H. and A. Schmeding,
\emph{Differentiable mappings on products with different degrees
of differentiability in the two factors},
Expo.\ Math.\ {\bf 33} (2015),
%No. 2,
184--222.
%
%
\bibitem{AGS}
Amiri, H., H. Gl\"{o}ckner, and A. Schmeding,
\emph{Lie groupoids of mappings taking values in a Lie groupoid},
Arch.\ Math., Brno {\bf 56} (2020),
% No. 5,
307--356. 
%
%
\bibitem{Bas}
Bastiani, A., \emph{Applications diff\'erentiables et vari\'et\'es diff\'erentiables 
de dimension infinie}, J. Anal.\ Math.\ \textbf{13} (1964), 1--114.
%
%
\bibitem{BGN}
Bertram, W., H. Gl\"{o}ckner, and
K.-H. Neeb,
\emph{Differential calculus over general base fields and rings},
Expo.\ Math.\ {\bf 22} (2004), 213--282.
%
%
\bibitem{BaS}
Bochnak, J. and J. Siciak,
\emph{Analytic functions in topological vector spaces},
Stud.\ Math.\ {\bf 39} (1971), 77--112.
%
%
\bibitem{Dah}
Dahmen, R.,
\emph{Analytic mappings between LB-spaces and applications
in infinite-dimensional Lie theory},
Math.\ Z. {\bf 266} (2010), 115--140.
%
%
\bibitem{DaG}
Dahmen, R. and H. Gl\"{o}ckner,
\emph{Bounded solutions of finite lifetime to differential equations in Banach spaces},
Acta Sci.\ Math.\ (Szeged) {\bf 81} (2015), 457--468.
%
%
\bibitem{DGS}
Dahmen, R., H. Gl\"{o}ckner,
and A. Schmeding,
\emph{Complexifications of infinite-dimensional manifolds and new constructions of infinite-dimensional Lie groups}, preprint,
arXiv:1410.6468.
%
%
\bibitem{DaS}
Dahmen, R. and A.
Schmeding,
\emph{The Lie group of real analytic diffeomorphisms
is not real analytic},
Stud.\ Math.\ {\bf 229} (2015), 141--172. 
%
% 
\bibitem{FMP}
Florencio, M.,
F. Mayoral und P. J. Pa\'{u}l,
\emph{Spaces of vector-valued integrable functions and localization of bounded subsets},
Math.\ Nachr.\ {\bf 174} (1995), 89--111.
%
%
\bibitem{Flo}
Floret, K.,
\emph{Lokalkonvexe Sequenzen mit kompakten Abbildungen},
J. Reine Angew.\ Math.\ {\bf 247} (1971), 155--195.
%
%
\bibitem{FaW}
Floret, K. and J. Wloka,
``Einf\"{u}hrung in die Theorie der lokalkonvexen R\"{a}ume'',
Springer, Berlin, 1968.
%
%
\bibitem{Res}
Gl\"{o}ckner, H.,
\emph{Infinite-dimensional Lie groups without completeness restrictions},
pp.\ 43--59 in: Strasburger, A. et al.\ (eds.),
``Geometry and Analysis on Finite- and Infinite-Dimensional Lie Groups,''
Banach Center Publications {\bf 55}, Warsaw, 2002.
%
%
\bibitem{GCX}
Gl\"{o}ckner, H.,
\emph{Lie group structures on quotient groups and universal
complexifications for infinite-dimensional Lie groups},
J. Funct.\ Anal.\ {\bf 194} (2002), 347--409.
%
%
\bibitem{IMP}
Gl\"{o}ckner, H.,
\emph{Implicit functions from topological vector spaces to Banach spaces},
Isr.\ J. Math.\ {\bf 155} (2006), 205--252.
%
%
\bibitem{SEM}
Gl\"{o}ckner, H.,
\emph{Regularity properties of infinite-dimensional Lie groups,
and semiregularity}, preprint, arXiv:1208.0715.
%
%
\bibitem{MeR}
Gl\"{o}ckner, H.,
\emph{Measurable regularity properties
of infinite-dimensional Lie groups},
preprint, arXiv:1601.02568.
%
%
\bibitem{GaH}
Gl\"{o}ckner, H. and J. Hilgert,
\emph{Aspects of control theory on infinite-dimensional Lie groups
and $G$-manifolds},
J. Differential Equations {\bf 343} (2023), 186--232.
%
%
\bibitem{GaN} Gl\"{o}ckner, H. and K.-H. Neeb,
``Infinite-Dimensional Lie Groups,'' book in preparation.
%
%
\bibitem{Gra}
Grauert, H.,
\emph{On Levi's problem and the imbedding of real-analytic manifolds},
Ann.\ Math.\ {\bf 68} (1958), 460--472.
%
% 
\bibitem{Ham}
Hamilton, R.\,S.,
\emph{The inverse function theorem of Nash and Moser},
Bull.\ Amer.\ Math.\ Soc.\ {\bf 7} (1982), 65--222.
%
%
\bibitem{Tro}
Hanusch, M.,
\emph{The strong Trotter property for locally $\mu$-convex Lie groups},
J. Lie Theory {\bf 30} (2020), 25--32.
%
%
\bibitem{Han}
Hanusch, M., \emph{Regularity of Lie groups},
Commun.\ Anal.\ Geom.\
{\bf 30} (2022),
%Number 1
53--152.
%
%
\bibitem{JL1}
Jafarpour, S. and A. D. Lewis,
``Time-Varying Vector Fields and Their Flows,''
Springer, 2014.
%
%
\bibitem{Kel}
Keller, H.~H., ``Differential Calculus
in Locally Convex Spaces'', Springer-Verlag,
Berlin, 1974.
%
%
\bibitem{Key}
Kelley, J.~L.,
``General Topology,'' Springer-Verlag,
New York, 1975.
%
%
\bibitem{KaS}
Klose, D. and F. Schuricht,
\emph{Parameter dependence for a class of ordinary
differential equations with measurable right-hand side},
Math. Nachr. 284 (2011),
%No. 4,
507--517.
%
%
\bibitem{KaM} Kriegl, A. and P.\,W. Michor,
``The Convenient Setting of Global Ana\-lysis,''
AMS, Providence, 1997.
%
%
\bibitem{Les}
Leslie, J.,
\emph{On the group of real analytic diffeomorphisms
of a compact real analytic manifold},
Trans.\ Am.\ Math.\ Soc.\ {\bf 274} (1982), 651--669.
%
%
\bibitem{Lew}
Lewis, A. D.,
\emph{Integrable and absolutely continuous vector-valued functions},
Rocky Mountain J. Math.\ {\bf 52} (2022),
%no. 3,
925--947.
%
%
\bibitem{Mic}
Michor, P.\,W., ``Manifolds of Differentiable Mappings'',
Shiva Publ., Orpington, 1980.
%
%
\bibitem{Mil} Milnor, J., \emph{Remarks on infinite-dimensional Lie groups},
pp.\,1007--1057 in: B.\,S. DeWitt and R. Stora (eds.),
``Relativit\'{e}, groupes et topologie II,'' North-Holland,
Amsterdam, 1984.
%
%
\bibitem{Nee}
Neeb, K.-H.,
\emph{Towards a Lie theory of locally convex groups},
Jpn.\ J. Math.\ {\bf 1} (2006), 291--468. 
%
%
\bibitem{NaS}
Neeb, K.-H. and H. Salmasian,
\emph{Differentiable vectors and unitary repre-
sentations of Fr\'{e}chet--Lie
supergroups}, Math.\ Z. {\bf 275} (2013),
%no. 1-2,
419--451.
%
%
\bibitem{Nik}
Nikitin, N.,
\emph{Regularity properties of infinite-dimensional Lie groups and exponential laws},
doctoral thesis, Paderborn University, 2021
(see\linebreak
%
https:/\!{}/nbn-resolving.de/urn:nbn:de:hbz:466:2-39133).
%
%
\bibitem{PaS}
Pressley, A. and G. Segal,
``Loop Groups,''
Clarendon Press, Oxforf, 1988.
%
%
\bibitem{Sch}
Schechter, E., ``Handbook of Analysis and its Foundations,''
Academic Press, San Diego, 1997.
%
%
\bibitem{Sm0}
Schmeding, A., \emph{The diffeomorphism group of a non-compact orbifold},
Dissertationes Math.\ {\bf 507} (2015), 179 pp.
%
%
\bibitem{Sme}
Schmeding, A.,
``An Introduction to Infinite-Dimensional Differential Geometry,''
Cambridge Univ.\ Press, Cambridge, 2023. 
%
%
\bibitem{SvM}
Schuricht, F. and H. von der Mosel,
\emph{Ordinary differential equations with measurable
right-hand side and parameters in metric spaces},
Preprint 676, SFB 256, Uni-Bonn, 2000
(cf.\ http://www.instmath.rwth-aachen.de/~heiko/veroeffentlichungen/ode.dvi).
%
%
\bibitem{Son}
Sontag, E. D.,
``Mathematical Control Theory,''
Springer, New York, ${}^2$1998. 
%
%
\bibitem{Tho}
Thomas, E.,
\emph{The Lebesgue-Nikodym theorem for vector valued Radon measures},
Mem.\ Am.\ Math.\ Soc.\ {\bf 139} (1974),
101 pp.
%
%
\bibitem{Wal}
Walter, B.,
\emph{Weighted diffeomorphism groups of Banach spaces and weighted mapping groups},
Diss.\ Math.\ {\bf 484} (2012), 126 pp.
%
%
\bibitem{WaB}
Whitney, H.
and F. Bruhat,
\emph{Quelques propri\'{e}t\'{e}s fondamentales des ensembles analytiques-r\'{e}els},
Comment.\ Math.\ Helv.\ {\bf 33} (1959), 132--160.
%
%
\end{thebibliography}
\end{document}